\newtheorem{thm}{Theorem}
\newtheorem{lemma}{Lemma}
\newtheorem{prop}{Proposition}
\newtheorem{defn}{Definition}
\newtheorem{remark}{Remark}
\newtheorem{ex}{Example}
\newtheorem{nt}{Notation}
\begin{document}

\title[On the $\mathcal{S}\left(L(p,1)\right)$ via braids]
  {The braid approach to the HOMFLYPT skein module of the lens spaces $L(p,1)$}

\author{Ioannis Diamantis}
\address{ International College Beijing,
China Agricultural University,
No.17 Qinghua East Road, Haidian District,
Beijing, {100083}, P. R. China.}
\email{ioannis.diamantis@hotmail.com}

\author{Sofia Lambropoulou}
\address{ Departament of Mathematics,
National Technical University of Athens,
Zografou campus,
{GR-15780} Athens, Greece.}
\email{sofia@math.ntua.gr}
\urladdr{http://www.math.ntua.gr/~sofia}

\keywords{HOMFLYPT skein module, solid torus, Iwahori--Hecke algebra of type B, mixed links, mixed braids, lens spaces. }

\subjclass[2010]{57M27, 57M25, 57Q45, 20F36, 20C08}

\thanks{This research  has been co-financed by the European Union (European Social Fund - ESF) and Greek national funds through the Operational Program ``Education and Lifelong Learning" of the National Strategic Reference Framework (NSRF) - Research Funding Program: THALES: Reinforcement of the interdisciplinary and/or inter-institutional research and innovation. }

\setcounter{section}{-1}

\date{}

\begin{abstract}
In this paper we present recent results toward the computation of the HOMFLYPT skein module of the lens spaces $L(p,1)$, $\mathcal{S}\left(L(p,1) \right)$, via braids. Our starting point is the knot theory of the solid torus ST and the Lambropoulou invariant, $X$, for knots and links in ST, the universal analogue of the HOMFLYPT polynomial in ST. The relation between $\mathcal{S}\left(L(p,1) \right)$ and $\mathcal{S}({\rm ST})$ is established in \cite{DLP} and it is shown that in order to compute $\mathcal{S}\left(L(p,1) \right)$, it suffices to solve an infinite system of equations obtained by performing all possible braid band moves on elements in the basis of $\mathcal{S}({\rm ST})$, $\Lambda$, presented in \cite{DL2}. The solution of this infinite system of equations is very technical and is the subject of a sequel paper \cite{DL3}.
\end{abstract}

\maketitle

\section{Introduction}\label{intro}

Skein modules were independently introduced in 1987 by Przytycki \cite{P} and Turaev \cite{Tu}. They generalize knot polynomials in $S^3$ to knot polynomials in arbitrary 3-manifolds. The essence is that skein modules are quotients of free modules over ambient isotopy classes of links in 3-manifolds by properly chosen local (skein) relations.

\smallbreak

Let $M$ be an oriented $3$-manifold, $R=\mathbb{Z}[u^{\pm1},z^{\pm1}]$, $\mathcal{L}$ the set of all oriented links in $M$ up to ambient isotopy in $M$ and let $S$ be the submodule of $R\mathcal{L}$ generated by the skein expressions $u^{-1}L_{+}-uL_{-}-zL_{0}$, where
$L_{+}$, $L_{-}$ and $L_{0}$ comprise a Conway triple represented schematically by the illustrations in Figure~\ref{skein}.

\begin{figure}[!ht]
\begin{center}
\includegraphics[width=1.7in]{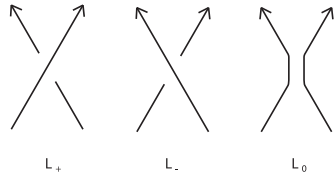}
\end{center}
\caption{The links $L_{+}, L_{-}, L_{0}$ locally.}
\label{skein}
\end{figure}

\noindent For convenience we allow the empty knot, $\emptyset$, and add the relation $u^{-1} \emptyset -u\emptyset =zT_{1}$, where $T_{1}$ denotes the trivial
knot. Then the {\it HOMFLYPT skein module} of $M$ is defined to be:

\begin{equation*}
\mathcal{S} \left(M\right)=\mathcal{S} \left(M;{\mathbb Z}\left[u^{\pm 1} ,z^{\pm 1} \right],u^{-1} L_{+} -uL_{-} -zL{}_{0} \right)={\raise0.7ex\hbox{$
R\mathcal{L} $}\!\mathord{\left/ {\vphantom {R\mathcal{L} S }} \right. \kern-\nulldelimiterspace}\!\lower0.7ex\hbox{$ S  $}}.
\end{equation*}

\smallbreak

Skein modules of $3$-manifolds have become very important algebraic tools in the study of $3$-manifolds, since their properties renders topological information about the $3$-manifolds. Unlike the Kauffman bracket skein module, the HOMFLYPT skein module of a $3$-manifold, also known as \textit{Conway skein module} and as \textit{third skein module}, is very hard to compute and very little is known so far. More precisely, $\mathcal{S}(S^3)=\mathbb{Z}[v^{\pm 1},z^{\pm 1}]$, where the empty link is a generator of the module (\cite{FYHLMO}, \cite{PT}). Also, for the solid torus ST, $\mathcal{S}({\rm ST})$ is a free, infinitely generated $\mathbb{Z}[u^{\pm1},z^{\pm1}]$-module isomorphic to the symmetric tensor algebra $SR\widehat{\pi}^0$, where $\widehat{\pi}^0$ denotes the conjugacy classes of non trivial elements of $\pi_1(\rm ST)$ (\cite{HK}, \cite{Tu}, \cite{La4}, \cite{DL2}). Further, let $F$ denote a surface. Then $\mathcal{S}(F \times I)$ is an algebra which, as an $R$ module, is a free module isomorphic to the symmetric tensor algebra, $SR\pi^o$, where $\pi^o$ denotes the conjugacy classes of nontrivial elements of $\pi_1(F)$ (\cite{P2}). Moreover, $\mathcal{S}(\mathbb{R}P^2 \hat{\times} I)$ is freely generated by standard oriented unlinks as presented in Figure~\ref{mroc} (\cite{Mro}) and $\mathcal{S}(S^1\times S^2)$ is freely generated by the empty link over a properly chosen ring (\cite{GZ1}). Finally, $\mathcal{S}(M_1 \# M_2)$ is isomorphic to $\mathcal{S}(M_1)\otimes \mathcal{S}(M_2)$ modulo torsion, where $M_1, M_2$ are oriented 3-manifolds and $M_1 \# M_2$ their connected sum (\cite{GZ2}).

\bigbreak

In \cite{GM} the HOMFLYPT skein module of the lens spaces $L(p,1)$ is computed using diagrammatic method. The diagrammatic method could in theory be generalized to the case of $L(p, q), q > 1$, but the diagrams become even more complex to analyze and several induction arguments
fail.

\begin{figure}
\begin{center}
\includegraphics[width=1.3in]{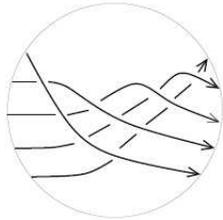}
\end{center}
\caption{An element in the basis of $\mathcal{S}(\mathbb{R}P^2 \hat{\times} I)$.}
\label{mroc}
\end{figure}

 In \cite{La4} the most generic analogue of the HOMFLYPT polynomial, $X$, for links in the solid torus $\rm ST$ has been derived from the generalized Hecke algebras of type $\rm B$, $\textrm{H}_{1,n}$, via a unique Markov trace constructed on them. This algebra was defined by Lambropoulou in \cite{La4} and is related to the knot theory of the solid torus, the Artin group of Coxeter group of type B, $B_{1, n}$, and to the affine Hecke algebra of type A. The Lambropoulou invariant $X$ recovers the HOMFLYPT skein module of ST, $\mathcal{S}({\rm ST})$, and is appropriate for extending the results to the lens spaces $L(p,q)$, since the combinatorial setting is the same as for $\rm ST$, only the braid equivalence includes the braid band moves (shorthanded to {\it bbm}), which reflect the surgery description of $L(p,q)$. For the case of $L(p,1)$, in order to extend $X$ to an invariant of links in $L(p,1)$ we need to solve an infinite system of equations resulting from the braid band moves. Namely we force:

\begin{equation}\label{eqbbm}
X_{\widehat{\alpha}} =  X_{\widehat{bbm(\alpha)}},
\end{equation}

\noindent for all $\alpha \in \bigcup_{\infty}B_{1,n}$ and for all possible slidings of $\alpha$.

\bigbreak

The above equations have particularly simple formulations with the use of a new basis, $\Lambda$, for the HOMFLYPT skein module of $\rm ST$, that we give in \cite{D, DL2}. This basis was predicted by Przytycki and is crucial in this paper, since {\it bbm}'s are naturally described by elements in this basis.

\smallbreak

In order to show that the set $\Lambda$ is a basis for $\mathcal{S}(\rm ST)$, we started in \cite{DL2} with the well-known basis of $\mathcal{S}(\rm ST)$, $\Lambda^{\prime}$, discovered independently in \cite{Tu} and \cite{HK} with diagrammatic methods, and a basis $\Sigma_n$ of the algebra $\textrm{H}_{1,n}$ and we followed the steps below:

\smallbreak

\begin{itemize}
\item[$\bullet$] An ordering relation in $\Lambda^{\prime}$ is defined and it is shown that the set is totally ordered. 

\smallbreak

\item[$\bullet$] Elements in $\Lambda^{\prime}$ are converted to linear combinations of elements in the new set $\Lambda$ as follows:

\smallbreak

\item[$\bullet$] Elements in $\Lambda^{\prime}$ are first converted to elements in the linear basis of ${\rm H}_{1, n}(q)$, $\Sigma_n$.
 
\smallbreak

\item[$\bullet$] Using conjugation, the gaps appearing in the indices of the looping generators in the monomials in $\bigcup_n\Sigma_n$ are managed.

\smallbreak

\item[$\bullet$] Using conjugation, the exponents of the looping generators are ordered.

\smallbreak

\item[$\bullet$] Using conjugation and stabilization moves, the `braiding tails' are removed from the above monomials and thus, the initial elements in $\bigcup_n\Sigma_n$ are converted to linear combination of elements in $\Lambda$.  

\smallbreak

\item[$\bullet$] Finally, the sets $\Lambda^{\prime}$ and $\Lambda$ are related via a block diagonal matrix, where each block is an infinite lower triangular matrix.

\smallbreak

\item[$\bullet$] The diagonal elements in the above matrix are invertible, making the matrix invertible and thus, the set $\Lambda$ is a basis for $\mathcal{S}(\rm ST)$.
\end{itemize}

\smallbreak

The new basis is appropriate for computing the HOMFLYPT skein module of the lens spaces $L(p,q)$ in general. Note that $\mathcal{S}(\rm ST)$ plays an important role in the study of HOMFLYPT skein modules of arbitrary c.c.o. $3$-manifolds, since every c.c.o. $3$-manifold can be obtained by surgery along a framed link in $S^3$ with unknotted components. The family of the lens spaces, $L(p,q)$, comprises the simplest example, since they are obtained by rational surgery on the unknot.

\smallbreak

Equations~(\ref{eqbbm}) are very controlled in the algebraic setting, because, as shown in \cite{DLP}, they can be performed only on elements in $\Lambda$. This is shown by following the technique developed in \cite{DL2}. The difference lies in the fact that here we deal with elements in $\Lambda$ and at the same time with their result after the performance of a {\it bbm} and we keep track of how {\it bbm}'s affect the steps described above. More precisely, in \cite{DLP} we followed the steps below:

\smallbreak

\begin{itemize}
\item[$\bullet$] Equations~(\ref{eqbbm}) boil down by linearity to considering only words in the canonical basis $\Sigma^{\prime}_{n}$ of the algebra ${\rm H}_{1,n}(q)$. 

\smallbreak

\item[$\bullet$] For words in $\bigcup_n\Sigma^{\prime}_{n}$ equations of the form $X_{\widehat{\alpha^{\prime}}}=X_{\widehat{bbm_{\pm 1}(\alpha^{\prime})}}$ are obtained, where $\widehat{bbm_{\pm 1}(\alpha^{\prime})}$ is the result of the performance of a braid band move on the {\it first} moving strand of the closed braid $\widehat{\alpha^{\prime}}$, and $\alpha^{\prime} \in \bigcup_n\Sigma^{\prime}_{n}$. 

\smallbreak

\item[$\bullet$] Then, elements in $\bigcup_n\Sigma^{\prime}_{n}$ are expressed to elements in the linear basis $\Sigma_n$ of ${\rm H}_{1,n}(q)$ and it is shown that the equations for words in $\bigcup_n\Sigma^{\prime}_{n}$ are equivalent to equations of the form $X_{\widehat{\alpha}}=X_{\widehat{bbm_{\pm 1}(\alpha)}}$, where $\alpha \in \bigcup_n\Sigma_n$. 

\smallbreak

\item[$\bullet$] A set $\Lambda^{aug}$ is then introduced, consisting of monomials in the looping generators $t_i$'s with no gaps  in the indices (as in $\Lambda$) but not necessarily ordered exponents.

\smallbreak

\item[$\bullet$] Equations for words in $\bigcup_n\Sigma_n$ are now reduced to equations obtained from elements in the ${\rm H}_{1,n}(q)$-module $\Lambda^{aug}$, where the braid band moves are performed on {\it any moving strand}. 

\smallbreak

\item[$\bullet$] Equations of the form $X_{\widehat{\beta}}=X_{\widehat{bbm_{\pm i}(\beta)}}$ are now obtained, where $\widehat{bbm_{\pm i}(\beta)}$ is the result of the performance of a braid band move on the $i^{th}$ moving strand of the closed braid $\widehat{\beta}$, and $\beta$ an element in the augmented set $\Lambda^{aug}$ followed by a `braiding tail'.

\smallbreak

\item[$\bullet$] Using conjugation, the exponents then become in decreasing order and equations obtained from elements in the ${\rm H}_{1,n}(q)$-module $\Lambda^{aug}$ by performing {\it bbm}'s on all moving strands are reduced to equations for words in ${\rm H}_{1,n}(q)$-module $\Lambda$ by performing {\it bbm}'s on all moving strands.

\smallbreak

\item[$\bullet$] The `braiding tails' from elements in the ${\rm H}_{1,n}(q)$-module $\Lambda$ are now eliminated and it is shown that equations for words in the ${\rm H}_{1,n}(q)$-module $\Lambda$ by performing braid band moves on any strand, are now reduced to equations obtained from elements in the basis $\Lambda$ of $\mathcal{S}({\rm ST})$ by performing braid band moves on every moving strand:
\end{itemize}

\begin{equation}\label{sysdlp}
\mathcal{S}\left( L(p,1) \right)\ =\ \frac{\mathcal{S}({\rm ST})}{<a-bbm_i(a)>},\ {\rm for\ all}\ i\ {\rm and\ for\ all}\ a\in \Lambda.
\end{equation}

\bigbreak

In \cite{DL3} we elaborate on the infinite system.

\bigbreak

The importance of our approach is that it can shed light on the problem of computing skein modules of arbitrary c.c.o. $3$-manifolds, since any $3$-manifold can be obtained by surgery on $S^3$ along unknotted closed curves. Indeed, one can use our results in order to apply a braid approach to the skein module of an arbitrary c.c.o. $3$-manifold. The main difficulty of the problem lies in selecting from the infinitum of band moves (or handle slide moves) some basic ones, solving the infinite system of equations and proving that there are no dependencies in the solutions. Note that the computation of $\mathcal{S}\left(L(p,1) \right)$ is equivalent to constructing all possible analogues of the HOMFLYPT or 2-variable Jones polynomial for knots and links in $L(p,1)$, since the linear dimension of $\mathcal{S}\left(L(p,1) \right)$ means the number of independent HOMFLYPT-type invariants defined on knots and links in $L(p,1)$.

\bigbreak

The paper is organized as follows: In \S\ref{basics} we recall the setting and the essential techniques and results from \cite{LR1,La1,LR2, DL1}. More precisely, we describe braid equivalence for knots and links in $L(p,1)$ and we present a sharpened version of the Reidemeister theorem for links in $L(p,1)$. We also provide geometric formulations of the braid equivalence via mixed braids in $S^3$ using the $L$-moves and the braid band moves and give algebraic formulations in terms of the mixed braid groups $B_{1,n}$. In \S\ref{SolidTorus} we present results from \cite{La4} and \cite{DL2}. More precisely, we recover the HOMFLYPT skein module of the solid torus ST, $\mathcal{S}({\rm ST})$, via algebraic techniques and we present a new basis for $\mathcal{S}({\rm ST})$, $\Lambda$, from which the braid band moves are naturally described. The aim of this section is to set a homogeneous ground in computing skein modules of c.c.o. $3$-manifolds in general via algebraic means. In \S~3 we derive the relation between $\mathcal{S}({\rm ST})$ and $\mathcal{S}\left(L(p,1) \right)$ and show that in order to compute $\mathcal{S}\left(L(p,1) \right)$ we only need to consider elements in the basis $\Lambda$ of $\mathcal{S}({\rm ST})$ and impose on the Lambropoulou invariant $X$ relations coming by performing all possible braid band moves on elements in $\Lambda$.

\section{Topological and algebraic tools}\label{basics}

\subsection{Mixed links and isotopy in $L(p,1)$}

We consider ST to be the complement of a solid torus in $S^3$. Then, an oriented link $L$ in ST can be represented by an oriented \textit{mixed link} in $S^{3}$, that is, a link in $S^{3}$ consisting of the unknotted {\it fixed part} $\widehat{I}$ representing the complementary solid torus in $S^3$ and the {\it moving part} $L$ that links with $\widehat{I}$. A \textit{mixed link diagram} is a diagram $\widehat{I}\cup \widetilde{L}$ of $\widehat{I}\cup L$ on the plane of $\widehat{I}$, where this plane is equipped with the top-to-bottom direction of $I$ (see \cite{LR1, La3}).

\begin{figure}
\begin{center}
\includegraphics[width=1.1in]{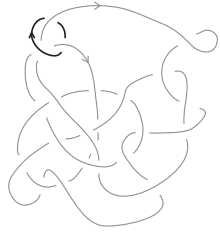}
\end{center}
\caption{A mixed link in $S^3$.}
\label{mlink}
\end{figure}

\smallbreak

It is well--known that the lens spaces $L(p,1)$ can be obtained from $S^3$ by surgery on the unknot with surgery coefficient $p$. Surgery along the unknot can be realized by considering first the complementary solid torus and then attaching to it a solid torus according to some homeomorphism on the boundary. Thus, isotopy in $L(p,1)$ can be viewed as isotopy in ST together with the {\it band moves} in $S^3$, which reflect the surgery description of the manifold, see Figure~\ref{bmov} (see \cite{LR1}). In \cite{DL1} we show that in order to describe isotopy for knots and links in a c.c.o. $3$-manifold, it suffices to consider only the type $\alpha$ band moves (see Figure~\ref{unknsurg}) and thus, isotopy between oriented links in $L(p,1)$ is reflected in $S^3$ by means of the following result (cf. \cite[Theorem~5.8]{LR1}, \cite[Theorem~6]{DL1}):

\smallbreak

{\it
Two oriented links in $L(p,1)$ are isotopic if and only if two corresponding mixed link diagrams of theirs differ by isotopy in {\rm ST} together with a finite sequence of the type $\alpha$ band moves.
}

\begin{figure}
\begin{center}
\includegraphics[width=4.5in]{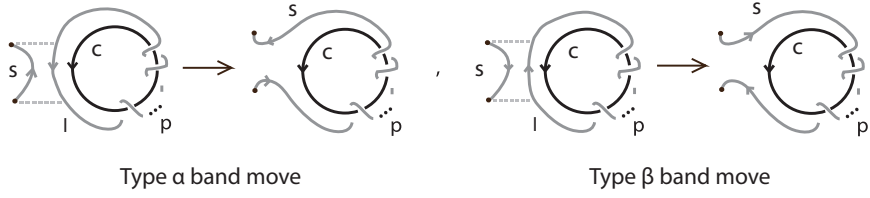}
\end{center}
\caption{The two types of band moves.}
\label{bmov}
\end{figure}

\begin{figure}
\begin{center}
\includegraphics[width=5in]{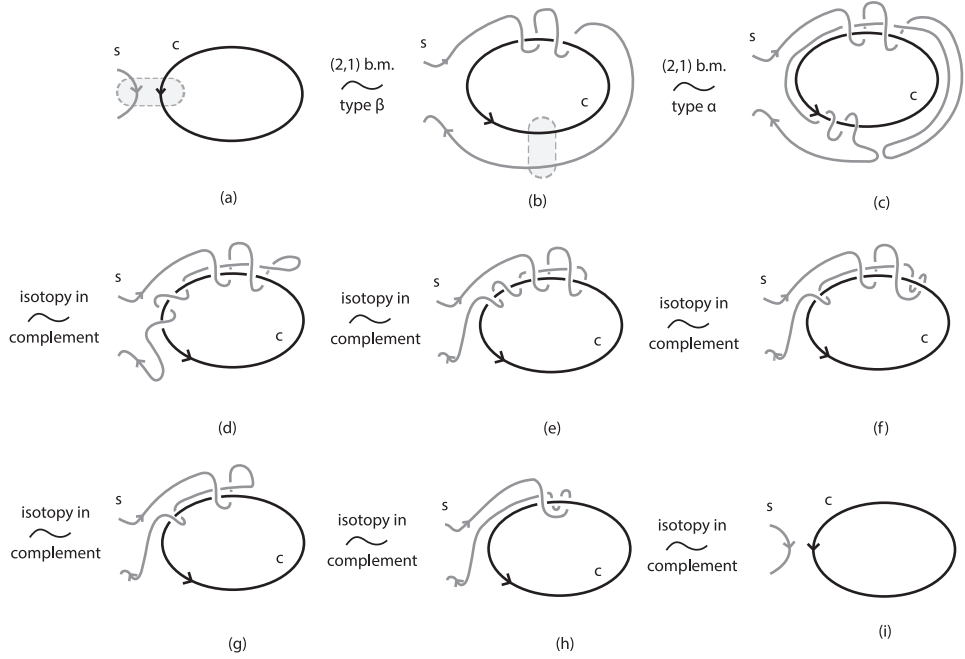}
\end{center}
\caption{A type-$\beta$ band move follows from a type-$\alpha$ band move in the case of integral surgery coefficient. }
\label{unknsurg}
\end{figure}

\subsection{Mixed braids for knots and links in $L(p,1)$}

By the Alexander theorem for knots in solid torus (\cite[Theorem~1]{La3}), a mixed link diagram $\widehat{I}\cup \widetilde{L}$ of $\widehat{I}\cup L$ may be turned into a \textit{mixed braid} $I\cup \beta $ with isotopic closure. This is a braid in $S^{3}$ where, without loss of generality, its first strand represents $\widehat{I}$, the fixed part, and the other strands, $\beta$, represent the moving part $L$. The subbraid $\beta$ is called the \textit{moving part} of $I\cup \beta $ (see Fig.~\ref{mbtoml}).

\begin{figure}
\begin{center}
\includegraphics[width=2in]{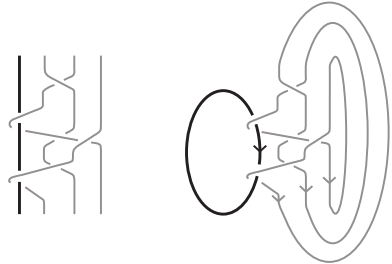}
\end{center}
\caption{The closure of a mixed braid to a mixed link.}
\label{mbtoml}
\end{figure}

The sets of mixed braids related to ST form groups, which are in fact the Artin braid groups of type B. The mixed braid group on $n$ moving strands is denoted as $B_{1,n}$ and it admits the following presentation:

\begin{equation}\label{presB}
B_{1,n} = \left< \begin{array}{ll}  \begin{array}{l} t, \sigma_{1}, \ldots ,\sigma_{n-1}  \\ \end{array} & \left| \begin{array}{l}
\sigma_{1}t\sigma_{1}t=t\sigma_{1}t\sigma_{1} \ \   \\
 t\sigma_{i}=\sigma_{i}t, \quad{i>1}  \\
{\sigma_i}\sigma_{i+1}{\sigma_i}=\sigma_{i+1}{\sigma_i}\sigma_{i+1}, \quad{ 1 \leq i \leq n-2}   \\
 {\sigma_i}{\sigma_j}={\sigma_j}{\sigma_i}, \quad{|i-j|>1}  \\
\end{array} \right.  \end{array} \right>,
\end{equation}

\noindent where the `braiding' generators $\sigma _{i}$ and the `looping' generator $t$ are illustrated in Figure~\ref{gen} (\cite{La3}).

\begin{figure}
\begin{center}
\includegraphics[width=2.3in]{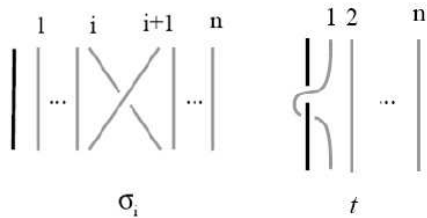}
\end{center}
\caption{The generators of $B_{1,n}$.}
\label{gen}
\end{figure}

\smallbreak

In $B_{1,n}$ we also define the elements:
\begin{equation}\label{lgen}
t^{\prime}_{i}:=\sigma_{i}\sigma_{i-1}\ldots \sigma_{1}t\sigma_{1}^{-1}\ldots \sigma_{i-1}^{-1}\sigma_{i}^{-1}\ {\rm and}\ t_{i}:=\sigma_{i}\sigma_{i-1}\ldots \sigma_{1}t\sigma_{1} \ldots \sigma_{i-1}\sigma_{i},
\end{equation}
for $i= 0, 1, \ldots, n-1$, where $t_0=t=t_0^{\prime}$. For illustrations see Figure~\ref{genh}.

\begin{remark}\label{tprt}\rm
In \cite[Prop.~1]{La4} it is shown using the Artin combing, that every element in $B_{1, n}$ can be written in the form $\tau^{\prime}\cdot w$, where $\tau^{\prime}$ is a word in the looping elements $t_i^{\prime}$ and $w\in B_n$ is the `braiding tail'. Furthermore, the looping elements $t_i^{\prime}$ are conjugates. On the other hand, the elements $t_i$ commute. Moreover, the $t_i$'s are consistent with the braid band move used in the link isotopy in $L(p,1)$, in the sense that a braid band move can be described naturally with the use of the $t_i$'s.  
\end{remark}

\begin{figure}\label{loopttpr}
\begin{center}
\includegraphics[width=3.2in]{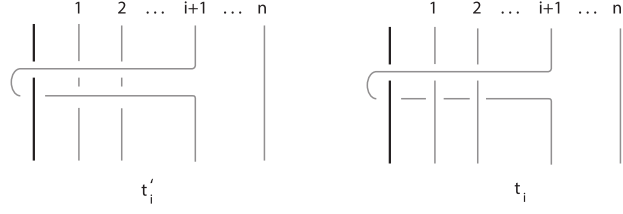}
\end{center}
\caption{The elements $t^{\prime}_{i}$ and $t_{i}$.}
\label{genh}
\end{figure}

\subsection{Braid equivalence for knots and links in $L(p,1)$}

In \cite{LR1} the authors give a sharpened version of the classical Markov theorem for braid equivalence in $S^3$ based only on one type of moves, the $L$-moves: An {\it $L$-move} on a mixed braid $I \bigcup \beta$, consists in cutting an arc of the moving sub-braid open and pulling the upper cut point downward and
the lower upward, so as to create a new pair of braid strands with corresponding endpoints, and such that both strands cross entirely {\it over} or {\it under} with the rest of the braid. Stretching the new strands over will give rise to an {\it $L_o$-move\/} and under to an {\it $L_u$-move\/}. For an illustration see Figure~\ref{lmove}. An algebraic $L$-move has the following algebraic expression for an $L_o$-move and an $L_u$-move respectively:

\begin{equation}\label{l-m}
\begin{array}{lll}
\alpha &=& \alpha_1\alpha_2 \stackrel{L_o}{\sim}
\sigma_i^{-1}\ldots \sigma_n^{-1} \alpha_1' \sigma_{i-1}^{-1}\ldots
\sigma_{n-1}^{-1}\sigma_n^{\pm 1}\sigma_{n-1} \ldots \sigma_i
\alpha_2' \sigma_n \ldots \sigma_i \\
\alpha&=&\alpha_1\alpha_2 \stackrel{L_u}{\sim}
\sigma_i\ldots \sigma_n \alpha_1' \sigma_{i-1}\ldots
\sigma_{n-1}\sigma_n^{\pm 1}\sigma_{n-1}^{-1}\ldots\sigma_i^{-1}
\alpha_2' \sigma_n^{-1}\ldots\sigma_i^{-1}
\end{array}
\end{equation}

\noindent  where $\alpha_1$,
 $\alpha_2$ are elements of $B_{1,n}$ and $\alpha_1'$,
 $\alpha_2' \in B_{1,n+1}$ are obtained from $\alpha_1$,
 $\alpha_2$ by replacing each $\sigma_j$ by $\sigma_{j+1}$ for
 $j=i,\ldots,n-1$.

\begin{figure}
\begin{center}
\includegraphics[width=4.9in]{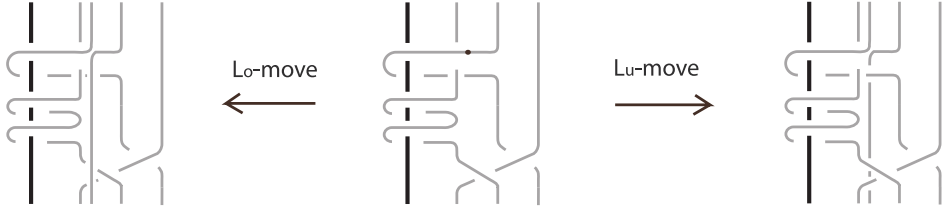}
\end{center}
\caption{A mixed braid and the two types of $L$-moves}
\label{lmove}
\end{figure}

In order to translate isotopy for links in $L(p,1)$ into mixed braid equivalence, we first define a {\it braid band move} to be a move between mixed braids, which is a type $\alpha$ band move between their closures. It starts with a little band oriented downward, which, before sliding along a surgery strand, gets one twist {\it positive\/} or {\it negative\/} (see Figure~\ref{bbm12}).

\begin{figure}
\begin{center}
\includegraphics[width=5in]{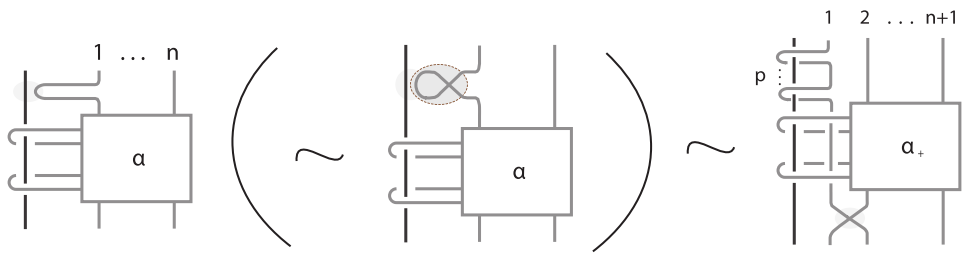}
\end{center}
\caption{Braid band move performed on the first moving strand.}
\label{bbm12}
\end{figure}

\smallbreak

Then, isotopy in $L(p,1)$ is translated on the level of mixed braids by means of the following theorem:

\begin{thm}{\cite[Theorem~5]{LR2}} \label{markov}
 Let $L_{1} ,L_{2}$ be two oriented links in $L(p,1)$ and let $I\cup \beta_{1} ,{\rm \; }I\cup \beta_{2}$ be two corresponding mixed braids in $S^{3}$. Then $L_{1}$ is isotopic to $L_{2}$ in $L(p,1)$ if and only if $I\cup \beta_{1}$ is equivalent to $I\cup \beta_{2}$ in $\mathop{\cup }\limits_{n=1}^{\infty } B_{1,n}$ by the following moves:
\[ \begin{array}{clll}
(i)  & Conjugation:         & \alpha \sim \beta^{-1} \alpha \beta, & \alpha ,\beta \in B_{1,n} \\
(ii) & Stabilization\ moves: &  \alpha \sim \alpha \sigma_{n}^{\pm 1} \in B_{1,n+1}, & \alpha \in B_{1,n} \\
(iii) & Loop\ conjugation: & \alpha \sim t^{\pm 1} \alpha t^{\mp 1}, & \alpha \in B_{1,n} \\
(iv) & Braid\ band\ moves: & \alpha \sim {t}^p \alpha_+\cdot \sigma_1^{\pm 1}, & \alpha_+\in B_{1, n+1}
\end{array} \]

\noindent where $\alpha_+$ is the word $\alpha$ with all indices shifted by 1 (see Figure~\ref{bbm12}). Equivalently, $L_{1}$ is isotopic to $L_{2}$ if and only if $I\cup \beta_{1}$ and $I\cup \beta_{2}$ differ by moves (iii) and (iv) as above, while moves (i) and (ii) are replaced by the two types of $L$-moves, see Eq.~(\ref{l-m}).
\end{thm}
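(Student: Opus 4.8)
The plan is to prove the two implications separately, leaning on the sharpened Reidemeister theorem for $L(p,1)$ quoted above, on the Alexander-type theorem for ST (\cite{La3}), and on the geometric Markov theorem for ST via $L$-moves (\cite{LR1}).

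\emph{The easy direction.} First I would check that any two mixed braids related by the moves $(i)$--$(iv)$ have closures isotopic in $L(p,1)$. Conjugation $\alpha\sim\beta^{-1}\alpha\beta$ and stabilization $\alpha\sim\alpha\sigma_n^{\pm1}$ are the classical Markov moves, so their closures are already isotopic inside ST, hence a fortiori in $L(p,1)$. Loop conjugation $\alpha\sim t^{\pm1}\alpha t^{\mp1}$ is the special case of Markov conjugation by the looping generator, so it too preserves the closure up to ST-isotopy; geometrically it slides the moving part once around the fixed part $\widehat{I}$. (It is listed separately because it must be retained in the alternative formulation, where $L$-moves do not generate conjugation by $t$.) Finally, the braid band move $\alpha\sim t^p\alpha_+\sigma_1^{\pm1}$ is by construction a type-$\alpha$ band move between the two closures, so by the sharpened Reidemeister theorem the closures are isotopic in $L(p,1)$. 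Thus every generating move is admissible.

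\emph{The hard direction.} Suppose $L_1,L_2$ are isotopic in $L(p,1)$ and fix corresponding mixed link diagrams $D_1,D_2$. By the sharpened Reidemeister theorem, $D_1$ and $D_2$ are joined by a finite sequence of elementary moves of two kinds: (a) isotopy moves in ST (planar isotopies together with the mixed Reidemeister moves) and (b) type-$\alpha$ band moves. I would braid the entire sequence by the Alexander theorem for ST, turning each intermediate diagram into a mixed braid, and then show that each elementary move, once braided, is realized on the braid level by $(i)$--$(iv)$. For the moves of type (a) this is precisely the geometric Markov theorem for ST (\cite{LR1,La3}): two mixed braids with ST-isotopic closures differ by conjugation and $L$-moves, equivalently by conjugation, loop conjugation and stabilization; this yields $(i),(ii),(iii)$ together with the alternative $L$-move formulation, since in the $L$-move calculus an $L$-move is a stabilization carried along a conjugation and conversely. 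For a move of type (b) the claim is that a single band move on a diagram, once braided, equals up to $(i)$--$(iii)$ a braid band move $t^p\alpha_+\sigma_1^{\pm1}$: one uses conjugation to bring the strand carrying the little band to the front so the move is performed on the first moving strand, the $\pm$ twist of the band braids to $\sigma_1^{\pm1}$, the index shift $\alpha\mapsto\alpha_+$ records the newly created strand, and—this is where the surgery coefficient enters—sliding the band along the surgery torus forces it to wind $p$ times around $\widehat{I}$, braiding to the factor $t^p$.

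The main obstacle is the compatibility of braiding with the band move. The Alexander braiding involves choices, so one must verify both that different braidings of a fixed diagram yield braids related by $(i)$--$(iii)$, and, more delicately, that braiding \emph{commutes} with a band move up to these moves—that the move can be localized to the first moving strand and normalized to the stated form without disturbing the braided remainder of the diagram. The precise bookkeeping of the $t^p$ factor (why exactly $p$ copies of $t$, with the correct signs and framing) together with establishing the equivalence of the Markov-move and the $L$-move formulations is the technical heart of the argument, and it is here that the $L$-move methodology of \cite{LR1} must be deployed in full.
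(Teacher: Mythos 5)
This theorem is not proved in the paper at all: it is imported verbatim from \cite[Theorem~5]{LR2}, with its geometric underpinnings in \cite{LR1}, so there is no internal proof to compare yours against --- the comparison must be with the cited sources. Judged against that, your outline reproduces the architecture of the actual argument faithfully: the forward direction by checking that each move preserves the isotopy class of the closure in $L(p,1)$ (the braid band move handled by its very definition as a type-$\alpha$ band move on closures, loop conjugation correctly recognized as a special case of conjugation that must nonetheless survive in the $L$-move formulation), and the converse by combining the sharpened Reidemeister theorem for $L(p,1)$, the Alexander and Markov theorems for ST, and the translation of a diagrammatic band move into the normal form $t^p\alpha_+\sigma_1^{\pm1}$, with the factor $t^p$ correctly attributed to the surgery framing. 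You also isolate exactly where the real work lies --- independence of the braiding choices, commutation of braiding with a band move up to moves (i)--(iii), and the equivalence of the $L$-move and Markov-move formulations --- which is precisely the content of the $L$-move machinery of \cite{LR1} and its algebraization in \cite{LR2}; since you invoke rather than execute that machinery, what you have is a correct plan along the same route as the cited proof, not a self-contained replacement for it.
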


\begin{nt}\rm
We denote a braid band move by {\it bbm} and, specifically, the result of a positive or negative braid band move performed on the $i^{th}$-moving strand of a mixed braid $\beta$ by $bbm_{\pm i}(\beta)$.
\end{nt}

Note that in the statement of Theorem~\ref{markov} in \cite{LR2} the braid band moves take place on the last strand of a mixed braid. Clearly, this is equivalent to performing the braid band moves on the first moving strand or, in fact, on any specified moving strand of the mixed braid. Indeed, in a mixed braid $\beta\in B_{1,n}$ consider the last moving strand of $\beta$ approaching the surgery strand $I$ from the right. Before performing a {\it bbm} we apply conjugation (isotopy in ST) and we obtain an equivalent mixed braid where the first strand is now approaching $I$ (see Figure~\ref{bbmconj}). In terms of diagrams we have the following (\cite[Lemma~1]{DLP}):

\[
\begin{array}{ccccc}
\beta & \sim & (\sigma_{i-1}\ldots \sigma_1 \sigma_1^{-1}\ldots \sigma_{i-1}^{-1})\cdot \beta & {\sim} & \underset{\alpha}{\underbrace{(\sigma_1^{-1}\ldots \sigma_{i-1}^{-1})\cdot \beta \cdot (\sigma_{i-1}\ldots \sigma_1)}} \\
\downarrow &  &  & & \downarrow \\
bbm_{\pm n}(\beta) &  &  & = & bbm_{\pm 1}(\alpha) 
\end{array}
\]

\begin{figure}
\begin{center}
\includegraphics[width=4.3in]{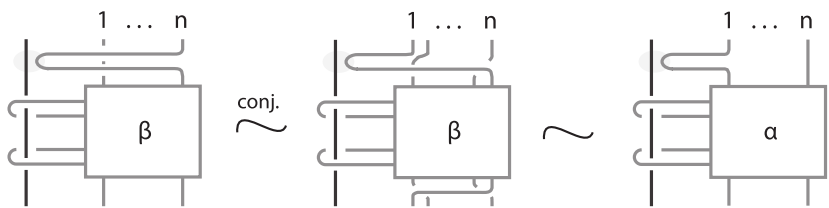}
\end{center}
\caption{ A {\it bbm} may always be assumed to be performed on the first moving strand of a mixed braid.}
\label{bbmconj}
\end{figure}

\subsection{The generalized Iwahori-Hecke algebra of type B}

It is well-known that $B_{1,n}$ is the Artin group of the Coxeter group of type B, which is related to the Hecke algebra of type B, $\textrm{H}_{n}{(q,Q)}$ and to the cyclotomic Hecke algebras of type B. In \cite{La4} it has been established that all these algebras are related to the knot theory of ST. The basic one is $\textrm{H}_{n}{(q,Q)}$, a presentation of which is obtained from the presentation (\ref{presB}) of the Artin group $B_{1,n}$ by corresponding the braiding generator $\sigma_i$ to $g_i$ and by adding the quadratic relations:

\begin{equation}\label{quad}
{g_{i}^2=(q-1)g_{i}+q}
\end{equation}

\noindent and the relation $t^{2} =\left(Q-1\right)t+Q$. The cyclotomic Hecke algebras of type B are denoted $\textrm{H}_{n}(q,d),\ d\in \mathbb{N}$, and they admit presentations that are obtained by the quadratic relation~(\ref{quad}) and the relation $t^d=(t-u_{1})(t-u_{2}) \ldots (t-u_{d})$. In \cite{La4} also the \textit{generalized Iwahori--Hecke algebra of type B}, $\textrm{H}_{1,n}(q)$, is introduced, as the quotient of ${\mathbb C}\left[q^{\pm 1} \right]B_{1,n}$ over the quadratic relations (\ref{quad}). Namely:

\begin{equation*}
\textrm{H}_{1,n}(q)= \frac{{\mathbb C}\left[q^{\pm 1} \right]B_{1,n}}{ \langle \sigma_i^2 -\left(q-1\right)\sigma_i-q \rangle}.
\end{equation*}

Note that in $\textrm{H}_{1,n}(q)$ the generator $t$ satisfies no polynomial relation, making the algebra $\textrm{H}_{1,n}(q)$ infinite dimensional, and as observed by T. tom Dieck, ${\rm H}_{1, n}$ is closely related to the affine Hecke algebra of type A, $\widetilde{\textrm{H}}_n(q)$. In \cite{La4} the algebra $\textrm{H}_{1,n}(q)$ is denoted as $\textrm{H}_{n}(q, \infty)$.

\smallbreak

In \cite{Jo} V.F.R. Jones gives the following linear basis for the Iwahori-Hecke algebra of type A, $\textrm{H}_{n}(q)$:

$$ S =\left\{(g_{i_{1} }g_{i_{1}-1}\ldots g_{i_{1}-k_{1}})(g_{i_{2} }g_{i_{2}-1 }\ldots g_{i_{2}-k_{2}})\ldots (g_{i_{p} }g_{i_{p}-1 }\ldots g_{i_{p}-k_{p}})\right\}, $$

\noindent for $1\le i_{1}<\ldots <i_{p} \le n-1{\rm \; }$.

\smallbreak

The basis $S$ yields directly an inductive basis for $\textrm{H}_{n}(q)$, which is used in the construction of the Ocneanu trace, leading to the HOMFLYPT or $2$-variable Jones polynomial.

\bigbreak

We also introduce in ${\rm H}_{1, n}(q)$ the `looping elements'
\begin{equation}
t_0^{\prime}\ =\ t_0\ :=\ t, \quad t_i^{\prime}\ =\ g_i\ldots g_1tg_1^{-1}\ldots g_i^{-1} \quad {\rm and}\quad t_i\ =\ g_i\ldots g_1tg_1\ldots g_i
\end{equation}

\noindent that correspond to the elements of Eq.~\ref{lgen} of $B_{1,n}$. 

\smallbreak

From \cite{La4} we have:

\begin{thm}{\cite[Proposition~1 \& Theorem~1]{La4}} \label{basesH}
The following sets form linear bases for ${\rm H}_{1,n}(q)$:
\[
\begin{array}{llll}
 (i) & \Sigma_{n} & = & \{t_{i_{1} } ^{k_{1} } \ldots t_{i_{r}}^{k_{r} } \cdot \sigma \} ,\ {\rm where}\ 0\le i_{1} <\ldots <i_{r} \le n-1,\\
 (ii) & \Sigma^{\prime} _{n} & = & \{ {t^{\prime}_{i_1}}^{k_{1}} \ldots {t^{\prime}_{i_r}}^{k_{r}} \cdot \sigma \} ,\ {\rm where}\ 0\le i_{1} < \ldots <i_{r} \le n-1, \\
\end{array}
\]
\noindent where $k_{1}, \ldots ,k_{r} \in {\mathbb Z}$ and $\sigma$ a basic element in ${\rm H}_{n}(q)$.
\end{thm}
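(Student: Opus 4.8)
The plan is to establish the two claims for each set separately—that it spans ${\rm H}_{1,n}(q)$ over $\mathbb{C}[q^{\pm 1}]$ and that it is linearly independent—and then to derive the $\Sigma'_n$ statement from the $\Sigma_n$ statement (or conversely) by an invertible change of basis, since $t_i$ and $t'_i$ differ only by braiding generators.

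First I would prove spanning. Since the quadratic relation (\ref{quad}) expresses $g_i^{-1}$ as a $\mathbb{C}[q^{\pm 1}]$-combination of $1$ and $g_i$, and $t$ is invertible, ${\rm H}_{1,n}(q)$ is spanned as a module by arbitrary monomials in $t^{\pm 1}$ and the $g_i$. By the Artin combing recalled in Remark~\ref{tprt} (\cite[Prop.~1]{La4}), every such monomial can be rewritten as $\tau'\cdot w$, where $\tau'$ is a word in the looping elements $t'_i$ and $w$ lies in the image of $B_n$, that is, in ${\rm H}_n(q)$. Applying Jones's basis $S$ to the braiding tail $w$ reduces it to a combination of basic elements $\sigma$, so every element of the algebra becomes a combination of products $\tau'\sigma$. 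It then remains to bring each looping word into the ordered normal form ${t'_{i_1}}^{k_1}\cdots {t'_{i_r}}^{k_r}$ with $i_1<\cdots<i_r$. The mechanism is a family of commutation-through-braiding relations: transposing two looping generators, or pushing one past a $g_j$, produces the correctly ordered (and, for repeated indices, merged) leading term together with correction terms carrying strictly fewer looping factors or a shorter tail. For $\Sigma_n$ this is transparent because, as noted in Remark~\ref{tprt}, the $t_i$ genuinely commute, so once everything is expressed in the $t_i$ the indices sort and the exponents merge, with the $g$-corrections absorbed into $\sigma$; for $\Sigma'_n$ one uses the analogous conjugation relations among the $t'_i$. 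An induction on the complexity triple (number of looping factors, total exponent $\sum|k_j|$, braid length of the tail) closes the spanning argument.

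The main obstacle is linear independence, for which I would argue inductively on $n$. Mirroring Jones's inductive basis for ${\rm H}_n(q)$, I would exhibit ${\rm H}_{1,n}(q)$ as a free ${\rm H}_{1,n-1}(q)$-module on the coset representatives $t_{n-1}^{k}\,(g_{n-1}g_{n-2}\cdots g_j)$, with $k\in\mathbb{Z}$ and the empty product allowed, so that the PBW-type factorization into the commuting polynomial part generated by $t_0,\ldots,t_{n-1}$ tensored with the $n!$-dimensional algebra ${\rm H}_n(q)$ is forced to be direct. The delicate point is that these representatives are genuinely free over ${\rm H}_{1,n-1}(q)$—equivalently, that the ordered monomials $t_{i_1}^{k_1}\cdots t_{i_r}^{k_r}$ do not collapse under the defining relations (\ref{presB}), (\ref{quad}). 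This can be secured by the inductive module decomposition above, or, as the excerpt hints, by transporting independence from the affine Hecke algebra of type A through tom Dieck's identification, whose Bernstein--Lusztig PBW basis $\{X^{\lambda}T_w\}$ matches the lattice part $X^{\lambda}\leftrightarrow t_{i_1}^{k_1}\cdots t_{i_r}^{k_r}$ with the Hecke part $T_w\leftrightarrow\sigma$.

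Finally, because $t'_i$ and $t_i$ share the same leading term and differ only by braiding generators, the transition matrix between looping monomials in the $t'_i$ and in the $t_i$, for a fixed tail $\sigma$, is unitriangular with respect to the complexity order and hence invertible. Consequently $\Sigma'_n$ spans and is independent exactly when $\Sigma_n$ is, so both sets are linear bases for ${\rm H}_{1,n}(q)$, completing the proof.
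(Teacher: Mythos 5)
The paper does not actually prove this statement: Theorem~\ref{basesH} is imported wholesale from \cite[Proposition~1 \& Theorem~1]{La4}, so your proposal can only be judged against the cited source's strategy and on its own internal soundness. Much of it is sound. The spanning argument is the standard one and matches the framework the paper itself recalls: the Artin combing (Remark~\ref{tprt}) plus the quadratic relation reduces everything to looping monomials followed by braiding tails, Jones's basis $S$ handles the tails, and a rewriting induction (using that the $t_i$ commute and that $g_j$ commutes with $t_i$ for $j\neq i,i+1$) sorts the looping part. Your second route to independence of $\Sigma_n$ is also viable: $t\mapsto X_1$, $g_i\mapsto T_i$ really does define a homomorphism to the affine Hecke algebra of ${\rm GL}_n$ (the four-term relation maps to $T_1X_1T_1X_1=X_1T_1X_1T_1$, which holds because $T_1X_1T_1=qX_2$ and $X_1X_2=X_2X_1$), it sends $t_i\mapsto q^{i}X_{i+1}$, hence carries $\Sigma_n$ bijectively to unit multiples of Bernstein--Lusztig basis elements, and independence pulls back. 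This is in the spirit of tom Dieck's observation quoted in the paper, though it outsources the hard content to the Bernstein--Lusztig theorem.

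There are, however, two genuine gaps. First, your primary route to independence is circular: you assert that freeness of the representatives $t_{n-1}^{k}(g_{n-1}\cdots g_j)$ over ${\rm H}_{1,n-1}(q)$ ``can be secured by the inductive module decomposition above,'' but the directness of that decomposition \emph{is} the assertion to be proved; without either a faithful action constructed on the free module with basis $\Sigma_n$ (a PBW/diamond-lemma argument) or the affine Hecke transport, nothing prevents the ordered monomials from collapsing. Second, and more seriously, the deduction of (ii) from (i) fails as stated. The transition between $t^{\prime}$-monomials and $t$-monomials is \emph{not} unitriangular ``for a fixed tail $\sigma$'': rewriting the $t_i^{\prime}$ in terms of the $t_i$ unavoidably creates braiding tails and non-unit coefficients. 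Concretely, $t_1^{\prime}=t_1g_1^{-2}=(1-q^{-1}+q^{-2})\,t_1+(q^{-2}-q^{-1})\,t_1g_1$, so the homologous word $t_1$ occurs with coefficient $q^{-2}(q^2-q+1)$ --- not $1$, and not even a unit of $\mathbb{C}[q^{\pm1}]$ --- while a new tail $g_1$ appears; note also that the affine Hecke transport does not apply directly to $\Sigma^{\prime}_n$, since the image of $t_i^{\prime}$ is not a unit multiple of a Bernstein--Lusztig basis element. What is true is only a \emph{block}-triangularity: the expansion of an element of $\Sigma^{\prime}_n$ consists of terms with the same $t$-monomial but arbitrary tails (here the block-diagonal operator is right multiplication by the invertible element $g_1^{-2}$), plus terms with strictly smaller monomials, and one must prove that the diagonal blocks are invertible operators in general. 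Establishing precisely this kind of triangular structure is the technical heart of the matter --- it is what \cite[Theorem~7]{DL2} (Theorem~\ref{convert} above) accomplishes at the skein-module level --- and it does not follow formally from the remark that $t_i$ and $t_i^{\prime}$ ``share the same leading term.''
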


For an illustration of the $t_i$'s and the $t_i^{\prime}$'s recall Figure~\ref{genh}.

\begin{remark}\label{conind}\rm
Note that the form of the elements in the set $\Sigma_n^{\prime}$ is consistent with the Artin combing in the mixed braid group $B_{1, n}$ (recall Remark~\ref{tprt}). However the looping part of a mixed braid after the combing contains elements of a free group on $n$ generators, so the indices as well as the exponents have no restrictions. In Theorem~\ref{basesH} though, the indices of the $t_i$'s and the $t^{\prime}_i$'s in the above sets are ordered but are not necessarily consecutive, neither do they need to start from $t$. Also, the exponents are arbitrary. 
\end{remark}

\begin{nt}\label{SSpr} \rm
We shall denote 
\begin{equation}
\Sigma\ :=\ \bigcup_n \Sigma_n \quad {\rm and\ similarly}\quad \Sigma^{\prime}\ :=\ \bigcup_n \Sigma_n^{\prime}.
\end{equation}
\end{nt}

\section{The HOMFLYPT skein module of the solid torus}\label{SolidTorus}

In \cite{Tu}, \cite{HK} the HOMFLYPT skein module of the solid torus ST has been computed using diagrammatic methods by means of the following theorem:

\begin{thm}[Turaev, Kidwell--Hoste] \label{turaev}
The skein module $\mathcal{S}({\rm ST})$ is a free, infinitely generated $\mathbb{Z}[u^{\pm1},z^{\pm1}]$-module isomorphic to the symmetric
tensor algebra $SR\widehat{\pi}^0$, where $\widehat{\pi}^0$ denotes the conjugacy classes of non trivial elements of $\pi_1(\rm ST)$.
\end{thm}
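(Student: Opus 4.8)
The plan is to establish the isomorphism $\mathcal{S}(\mathrm{ST})\cong SR\widehat{\pi}^0$ by exhibiting an explicit free generating set indexed by the conjugacy classes of nontrivial elements of $\pi_1(\mathrm{ST})\cong\mathbb{Z}$ and then showing both freeness and spanning. Since $\pi_1(\mathrm{ST})\cong\mathbb{Z}$ is abelian, each nonzero integer is its own conjugacy class, and an element of the symmetric tensor algebra $SR\widehat{\pi}^0$ is an unordered multiset of nonzero integers, i.e.\ a monomial $\prod_j x_{k_j}$ in formal symbols $x_k$, $k\in\mathbb{Z}\setminus\{0\}$. Geometrically, to each nonzero integer $k$ I would associate the closed-up braid $t_k$ (in the notation of Eq.~\ref{lgen}), a single curve winding $k$ times around the core of $\mathrm{ST}$, and to a multiset $\{k_1,\dots,k_r\}$ the disjoint union of such curves, drawn in parallel concentric annular layers. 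Call this candidate family $\Lambda'$ (the basis later cited in the excerpt). The goal is to prove that the ambient-isotopy classes of these multicurves form a free $R$-basis of $\mathcal{S}(\mathrm{ST})$.

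First I would prove \emph{spanning}. Given any oriented link $L$ in $\mathrm{ST}$, represent it as a mixed braid $I\cup\beta$ and apply the skein relation $u^{-1}L_+-uL_--zL_0$ to resolve crossings. The strategy is a double induction on the number of crossings of the moving part and on a secondary complexity measuring how far the diagram is from a union of the standard winding curves. Each application of the skein relation either lowers the crossing number or replaces a self-crossing by a smoothing that splits or merges winding strands; the relation for the empty knot handles split trivial components. Using the braid combing of Remark~\ref{tprt}, every mixed braid is conjugate to a product in the looping elements times a braiding tail in $B_n$, and the tail can be reduced skein-theoretically so that what survives are, up to lower-order terms, the pure winding monomials $\prod_j t_{k_j}$. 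This shows $\Lambda'$ spans $\mathcal{S}(\mathrm{ST})$ over $R$.

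The harder half is \emph{linear independence}, and this is where I expect the main obstacle to lie: one cannot establish freeness by diagrammatic reduction alone, since a priori the skein relations might impose hidden linear dependencies among the winding multicurves. The standard route, which I would follow, is to construct enough $R$-linear invariants to separate the proposed basis elements. Concretely, I would use a Markov trace on the tower of Hecke-type algebras associated to $B_{1,n}$ (the Ocneanu-style trace, available once the quadratic relation is imposed) to define, for each monomial, an invariant that detects its multiset of winding numbers; the invariant $X$ of \cite{La4} alluded to in the introduction is precisely such a device. Evaluating these invariants on the elements of $\Lambda'$ yields a triangular (indeed diagonal, after suitable normalization) pairing matrix with invertible diagonal entries over $R$, forcing linear independence. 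Equivalently, one may invoke the general principle that $SR\widehat{\pi}^0$ is the associated graded of the skein filtration and check that the reduction map carries $\Lambda'$ bijectively onto the monomial basis of the symmetric algebra. Combining spanning with independence gives that $\Lambda'$ is a free $R$-basis indexed by $\widehat{\pi}^0$, which is exactly the asserted isomorphism $\mathcal{S}(\mathrm{ST})\cong SR\widehat{\pi}^0$.
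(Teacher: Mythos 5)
Your proposal follows essentially the same route as the paper: the theorem is cited there to the diagrammatic work of Turaev and Hoste--Kidwell, but the paper's own braid-theoretic recovery (\S 2.1, following \cite{La4}) is exactly what you describe --- spanning via the correspondence of $\mathcal{S}({\rm ST})$ with the braid-form set $\Lambda^{\prime}$ (Artin combing plus reduction of braiding tails), and freeness from the Markov trace of Theorem~\ref{tr}, which separates distinct elements of $\Lambda^{\prime}$ because ${\rm tr}(t^{k_0}{t^{\prime}_1}^{k_1}\cdots {t^{\prime}_n}^{k_n})=s_{k_n}\cdots s_{k_1}s_{k_0}$ are distinct monomials in the algebraically independent parameters $s_k$. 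One small correction: a single curve winding $k$ times around the core is the closure of the power $t^k$, not of the looping element $t_k$ of Eq.~(\ref{lgen}), so the basis elements are the closures of $t^{k_0}{t^{\prime}_1}^{k_1}\cdots{t^{\prime}_n}^{k_n}$.
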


In the diagrammatic setting of \cite{Tu} and \cite{HK}, ST is considered as ${\rm Annulus} \times {\rm Interval}$. A basic element of $\mathcal{S}({\rm ST})$ in the context of \cite{Tu, HK}, is illustrated in Figure~\ref{tur}. The HOMFLYPT skein module of ST is particularly important, because any closed, connected, oriented (c.c.o.) $3$-manifold can be obtained by surgery along a framed link in $S^3$ with unknotted components. Such component can be viewed as a solid torus complementing ST in $S^3$.

\begin{figure}
\begin{center}
\includegraphics[width=1.3in]{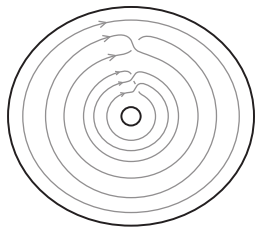}
\end{center}
\caption{A basic element of $\mathcal{S}({\rm ST})$.}
\label{tur}
\end{figure}

\subsection{Recovering $\mathcal{S}\left({\rm ST}\right)$ using algebraic means}

In \cite{La4} the bases $\Sigma^{\prime}_{n}$ are used for constructing a Markov trace on $\bigcup _{n=1}^{\infty }\textrm{H}_{1,n}(q)$.

\begin{thm}{\cite[Theorem~6]{La4}} \label{tr}
Given $z,s_{k}$, with $k\in {\mathbb Z}$ specified elements in $R={\mathbb Z}\left[q^{\pm 1} \right]$, there exists
a unique linear Markov trace function
\begin{equation*}
{\rm tr}:\bigcup _{n=1}^{\infty }{\rm H}_{1,n}(q)  \to R\left(z,s_{k} \right),k\in {\mathbb Z}
\end{equation*}

\noindent determined by the rules:
\[
\begin{array}{lllll}
(1) & {\rm tr}(ab) & = & {\rm tr}(ba) & \quad {\rm for}\ a,b \in {\rm H}_{1,n}(q) \\
(2) & {\rm tr}(1) & = & 1 & \quad {\rm for\ all}\ {\rm H}_{1,n}(q) \\
(3) & {\rm tr}(ag_{n}) & = & z{\rm tr}(a) & \quad {\rm for}\ a \in {\rm H}_{1,n}(q) \\
(4) & {\rm tr}(a{t^{\prime}_{n}}^{k}) & = & s_{k}{\rm tr}(a) & \quad {\rm for}\ a \in {\rm H}_{1,n}(q),\ k \in {\mathbb Z}. \\
\end{array}
\]
\end{thm}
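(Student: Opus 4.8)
The plan is to build $\mathrm{tr}$ by the inductive Ocneanu--Jones procedure, now adapted to type B, exploiting the fact that the linear bases $\Sigma'_n$ of Theorem~\ref{basesH} assemble into an inductive basis compatible with the tower of inclusions ${\rm H}_{1,1}(q)\subset {\rm H}_{1,2}(q)\subset\cdots$. The key structural observation is that in passing from ${\rm H}_{1,n}(q)$ to ${\rm H}_{1,n+1}(q)$ the only new letters are the braiding generator $g_n$ and the looping generator $t'_n$; consequently every word in $\Sigma'_{n+1}$ is a word of ${\rm H}_{1,n}(q)$ decorated by a power $(t'_n)^k$ and by a descending braiding block $g_ng_{n-1}\cdots g_j$ inherited from Jones' basis $S$ of ${\rm H}_{n+1}(q)$. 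Rules (3) and (4) are tailored precisely to strip off these two new factors and lower the top index, while (1) is the trace (conjugation) property and (2) the normalization, forcing $s_0=1$.

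Uniqueness is the easy half. Given any $R$-linear functional obeying (1)--(4), I would argue by induction on $n$ that its values are forced. On ${\rm H}_{1,1}(q)$, which is spanned by the powers $t^k$, rules (2) and (4) (taken with $a=1$ and $t=t'_0$) give $\mathrm{tr}(t^k)=s_k$, where necessarily $s_0=1$. For the inductive step, every basis word of $\Sigma'_{n+1}$ can be reduced, using (1) together with the defining relations to bring its top-index generators into position and then (3) or (4), to $z$ (resp. $s_k$) times the trace of an element of ${\rm H}_{1,n}(q)$, which is determined by hypothesis. By linearity this pins down $\mathrm{tr}$ on all of ${\rm H}_{1,n+1}(q)$, so any two functionals satisfying (1)--(4) coincide.

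For existence I would turn this reduction into a recursive \emph{definition}: set $\mathrm{tr}(t^k):=s_k$ on ${\rm H}_{1,1}(q)$ and, assuming $\mathrm{tr}$ constructed on ${\rm H}_{1,n}(q)$, assign to each element of the basis $\Sigma'_{n+1}$ the value obtained by peeling off its top-index data according to the formulas of (3) and (4). Since $\Sigma'_{n+1}$ is a genuine basis (Theorem~\ref{basesH}), $R$-linear extension yields a well-defined functional, and (2) holds by $s_0=1$. What then requires proof is that the recursion is compatible with the algebra, i.e.\ that (3) and (4) hold for \emph{arbitrary} $a\in{\rm H}_{1,n}(q)$ and not merely for basis words in the chosen normal form: this amounts to expanding $a g_n$ and $a (t'_n)^k$ back into $\Sigma'_{n+1}$ and verifying, with the quadratic relation (\ref{quad}), that the defining recursion reproduces exactly the scalars $z$ and $s_k$.

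The heart of the matter, and the step I expect to carry all the computational weight, is the trace property (1). I would prove $\mathrm{tr}(ab)=\mathrm{tr}(ba)$ by induction on $n$, reducing by linearity to the case where $b$ is a single generator ($g_i$, $t$, or $t'_i$) and $a$ runs over $\Sigma'_n$; the general case then follows by telescoping over the letters of a word. Commutations with generators of low index reduce immediately to the inductive hypothesis inside ${\rm H}_{1,n}(q)$. The delicate cases are those involving the top strand, where $g_n$ and $t'_n$ both appear and do \emph{not} commute. Here I would exploit the structural relation $t'_n=g_n t'_{n-1} g_n^{-1}$ together with the quadratic relation $g_n^2=(q-1)g_n+q$ to rewrite such products, split them into pieces governed by (3) and (4), and check that both sides collapse to the same combination of $z$'s, $s_k$'s and traces in ${\rm H}_{1,n}(q)$. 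Ensuring that conjugation past the new strand produces no spurious terms, so that this bookkeeping closes consistently, is exactly where the quadratic relation, the conjugacy of the $t'_i$ noted in Remark~\ref{tprt}, and the normalization $\mathrm{tr}(1)=1$ all intervene, and it is the crux of the whole argument.
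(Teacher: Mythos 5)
The paper you were given never proves Theorem~\ref{tr}; it is imported verbatim from \cite[Theorem~6]{La4}, so your attempt must be measured against Lambropoulou's proof there. Your overall strategy --- induction up the tower ${\rm H}_{1,1}(q)\subset{\rm H}_{1,2}(q)\subset\cdots$ using the bases $\Sigma^{\prime}_n$, uniqueness read off from rules (1)--(4), existence by a recursive definition whose main burden is the conjugation property --- is indeed the Jones-style scheme that \cite{La4} follows, and your uniqueness argument is essentially sound, since there rule (1) may legitimately be used to cycle letters into position.

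The genuine gap is in your existence construction. You propose to define ${\rm tr}$ on each basis word of $\Sigma^{\prime}_{n+1}$ by ``peeling off'' the top-index letters via the one-sided rules (3) and (4). This works when a basis word contains $g_n$ but not $t^{\prime}_n$, or a power of $t^{\prime}_n$ but no $g_n$; but a generic basis word contains \emph{both}, namely $w=\tau^{\prime}\,(t^{\prime}_n)^{k}\,\sigma_1\,(g_ng_{n-1}\cdots g_j)$ with $\tau^{\prime}\sigma_1\in{\rm H}_{1,n}(q)$. Neither rule applies to $w$: rule (4) needs $(t^{\prime}_n)^k$ at the right end with prefix in ${\rm H}_{1,n}(q)$, which fails because of the braiding block, and rule (3) needs everything left of the final $g_n$ to lie in ${\rm H}_{1,n}(q)$, which fails because of $(t^{\prime}_n)^k$ (which itself involves $g_n$). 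Pushing the looping power through with the commutations $t^{\prime}_n\sigma_1=\sigma_1t^{\prime}_n$ and $(t^{\prime}_n)^kg_n=g_n(t^{\prime}_{n-1})^k$ only converts $w$ into the two-sided form $a\,g_n\,b$ with $a=\tau^{\prime}\sigma_1$ and $b=g_{n-1}\cdots g_j(t^{\prime}_{j-1})^k$ both in ${\rm H}_{1,n}(q)$, which is still not covered by (3); to peel it one must cycle $b$ to the front, i.e.\ invoke rule (1) --- exactly the property you have not yet established, so the recursion as described is circular (equivalently, underdetermined) on mixed words. This is why the proof in \cite{La4}, like Jones' for the Ocneanu trace, first establishes the structural decomposition of ${\rm H}_{1,n+1}(q)$ into ${\rm H}_{1,n}(q)$, ${\rm H}_{1,n}(q)g_n{\rm H}_{1,n}(q)$ and the pieces ${\rm H}_{1,n}(q)(t^{\prime}_n)^k{\rm H}_{1,n}(q)$, and defines the trace by the \emph{two-sided} rules ${\rm tr}(ag_nb)=z\,{\rm tr}(ab)$ and ${\rm tr}\bigl(a(t^{\prime}_n)^kb\bigr)=s_k\,{\rm tr}(ab)$; the real work is then proving these assignments are well defined (independent of the chosen expression) and conjugation-invariant. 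So the computational crux you correctly flag is not merely deferred --- the recursion itself must be reformulated two-sidedly before that verification can even begin.
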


Note that the use of the looping elements $t_i^{\prime}$ enable the trace ${\rm tr}$ to be defined by just extending  by rule (4) the three rules of the Ocneanu trace on the algebras ${\rm H}_n(q)$ \cite{Jo}, recall Remark~\ref{tprt}. Using $\textrm{tr}$ the second author constructed a universal HOMFLYPT-type invariant for oriented links in ST. Namely, let $\mathcal{L}$ denote the set of oriented links in ST. Then:

\begin{thm}{\cite[Definition~1]{La4}} \label{inv}
The function $X:\mathcal{L}$ $\rightarrow R(z,s_{k})$

\begin{equation*}
X_{\widehat{\alpha}} = \Delta^{n-1}
{\rm tr}\left(\pi \left(\alpha \right)\right),
\end{equation*}

\noindent where $\Delta:=-\frac{1-\lambda q}{\sqrt{\lambda } \left(1-q\right)} \left(\sqrt{\lambda } \right)^{e}$, $\lambda := \frac{z+1-q}{qz}$, $\alpha \in B_{1,n}$ is a word in the $\sigma _{i}$'s and $t^{\prime}_{i} $'s, $\widehat{\alpha}$ is the closure of $\alpha$, $e$ is the exponent sum of the $\sigma _{i}$'s in $\alpha $, and $\pi$ the canonical map of $B_{1,n}$ on ${\rm H}_{1,n}(q)$, such that $t\mapsto t$ and $\sigma _{i} \mapsto g_{i} $, is an invariant of oriented links in {\rm ST}.
\end{thm}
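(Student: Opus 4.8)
The plan is to show that the scalar $X_{\widehat{\alpha}}$ depends only on the isotopy class of the oriented link $\widehat{\alpha}\subset{\rm ST}$ and not on the chosen braid representative $\alpha\in\bigcup_n B_{1,n}$. By the Alexander-type theorem for ${\rm ST}$ (\cite[Theorem~1]{La3}) every oriented link in ${\rm ST}$ is the closure of some mixed braid, so the closure map is onto; and by the Markov-type theorem for ${\rm ST}$ — which is the analogue of Theorem~\ref{markov} with the braid band moves (iv) deleted — two mixed braids have isotopic closures in ${\rm ST}$ if and only if they are connected by a finite sequence of conjugations (move (i)) and stabilizations (move (ii)). Since $\pi$ is the canonical algebra homomorphism ${\mathbb C}[q^{\pm1}]B_{1,n}\to{\rm H}_{1,n}(q)$, the element $\pi(\alpha)$ is well defined once $\alpha$ is fixed, and ${\rm tr}$ exists and is well defined on $\bigcup_n{\rm H}_{1,n}(q)$ by Theorem~\ref{tr}. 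It therefore suffices to check that $X$ is unchanged under moves (i) and (ii). Throughout I write the normalization in the form $X_{\widehat{\alpha}}=(\sqrt{\lambda})^{e}\,d^{\,n-1}\,{\rm tr}(\pi(\alpha))$, where $d:=-\frac{1-\lambda q}{\sqrt{\lambda}\,(1-q)}$ is the part of $\Delta$ not carrying the writhe factor $(\sqrt{\lambda})^{e}$.

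For conjugation, $\alpha\sim\beta^{-1}\alpha\beta$ with $\beta\in B_{1,n}$ leaves $n$ unchanged, leaves the exponent sum $e$ of the $\sigma_i$'s unchanged (the contributions of $\beta$ and $\beta^{-1}$ cancel), and gives ${\rm tr}(\pi(\beta^{-1}\alpha\beta))={\rm tr}(\pi(\beta)^{-1}\pi(\alpha)\pi(\beta))={\rm tr}(\pi(\alpha))$ by the trace property (1) of Theorem~\ref{tr}. Hence $X_{\widehat{\alpha}}=X_{\widehat{\beta^{-1}\alpha\beta}}$. In particular loop conjugation $\alpha\sim t^{\pm1}\alpha t^{\mp1}$ (move (iii)) is the special case $\beta=t^{\pm1}$ and needs no separate treatment.

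For stabilization I must compare $X_{\widehat{\alpha}}$ with $X_{\widehat{\alpha\sigma_n^{\pm1}}}$ for $\alpha\in B_{1,n}$ and $\alpha\sigma_n^{\pm1}\in B_{1,n+1}$. The positive case is immediate from property (3): ${\rm tr}(\pi(\alpha)g_n)=z\,{\rm tr}(\pi(\alpha))$, while $n\mapsto n+1$ and $e\mapsto e+1$; thus the ratio $X_{\widehat{\alpha\sigma_n}}/X_{\widehat{\alpha}}$ equals $\sqrt{\lambda}\,d\,z$. The negative case is the \emph{crux}: here one cannot apply property (3) directly to $g_n^{-1}$, so I first eliminate it by the quadratic relation (\ref{quad}), which gives $g_n^{-1}=q^{-1}g_n-(1-q^{-1})$, and then apply properties (3) and (2):
\begin{equation*}
{\rm tr}(\pi(\alpha)g_n^{-1})=\Big(q^{-1}z-(1-q^{-1})\Big)\,{\rm tr}(\pi(\alpha))=\frac{z+1-q}{q}\,{\rm tr}(\pi(\alpha))=\lambda z\,{\rm tr}(\pi(\alpha)),
\end{equation*}
the last equality being exactly the definition $\lambda=\frac{z+1-q}{qz}$. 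Now $n\mapsto n+1$ while $e\mapsto e-1$, so the ratio $X_{\widehat{\alpha\sigma_n^{-1}}}/X_{\widehat{\alpha}}$ equals $(\sqrt{\lambda})^{-1}d\,(\lambda z)=\sqrt{\lambda}\,d\,z$, the same expression as in the positive case. Both stabilizations are therefore simultaneously neutralized precisely when $\sqrt{\lambda}\,d\,z=1$, i.e. $d=\frac{1}{\sqrt{\lambda}\,z}$; substituting the definition of $d$ reduces this to $-z(1-\lambda q)=1-q$, which is again the defining identity of $\lambda$. This is the step I expect to be the real obstacle — not the algebra, which is short, but the fact that a single normalization must absorb the asymmetric factors $z$ and $\lambda z$ coming from the two signs; it is exactly this requirement that dictates the rescaling $(\sqrt{\lambda})^{e}$ (a symmetrization of the framing/writhe) and pins down the constant $\Delta$. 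Having verified invariance under conjugation, loop conjugation, and both stabilizations, the Markov-type theorem for ${\rm ST}$ lets me conclude that $X_{\widehat{\alpha}}$ is independent of the braid representative, so $X$ is a well-defined invariant of oriented links in ${\rm ST}$, completing the proof.
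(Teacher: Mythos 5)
Your proof is correct and is essentially the argument behind the cited result in \cite{La4} (the present paper states the theorem without reproducing a proof): conjugation invariance follows from the trace rule ${\rm tr}(ab)={\rm tr}(ba)$, positive stabilization from rule (3), negative stabilization via the quadratic relation $g_n^{-1}=q^{-1}g_n-(1-q^{-1})$ together with linearity of ${\rm tr}$, and the normalization is chosen precisely so that both stabilizations contribute the same factor $\sqrt{\lambda}\,d\,z=1$. Your reading of $\Delta^{n-1}$ as $d^{\,n-1}(\sqrt{\lambda})^{e}$, with the writhe factor $(\sqrt{\lambda})^{e}$ applied once rather than raised to the $(n-1)$-st power, is the intended interpretation of the paper's (slightly abusive) notation.
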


\bigbreak

In the braid setting of \cite{La4}, the elements of $\mathcal{S}({\rm ST})$ correspond bijectively to the elements of the following set $\Lambda^{\prime}$:

\begin{equation}\label{Lpr}
\Lambda^{\prime}=\{ {t^{k_0}}{t^{\prime}_1}^{k_1} \ldots
{t^{\prime}_n}^{k_n}, \ k_i \in \mathbb{Z}\setminus\{0\}, \ k_i \geq k_{i+1}\ \forall i,\ n\in \mathbb{N} \}.
\end{equation}

\noindent So, we have that $\Lambda^{\prime}$ is a basis of $\mathcal{S}({\rm ST})$ in terms of braids. Note that $\Lambda^{\prime}$ is a subset of $\bigcup_n{\textrm{H}_{1,n}}$ and, in particular, $\Lambda^{\prime}$ is a subset of $\Sigma^{\prime}$. Note also that in contrast to elements in $\Sigma^{\prime}$, the elements in $\Lambda^{\prime}$ have no gaps in the indices, the exponents are ordered and there are no `braiding tails'. 

\begin{remark}\rm
The Lambropoulou invariant $X$ recovers $\mathcal{S}({\rm ST})$, because it gives distinct values to distinct elements of $\Lambda^{\prime}$, since $tr(t^{k_0}{t^{\prime}_1}^{k_1} \ldots {t^{\prime}_n}^{k_n})=s_{k_n}\ldots s_{k_1}s_{k_0}$.
\end{remark}

\noindent Note also that the invariant $X$ is defined by the skein relation:
\[
\frac{1}{\sqrt{q}\sqrt{\lambda}}X_{L_+}-\sqrt{q}\sqrt{\lambda}X_{L_-}\ =\ \left(\sqrt{q}-\frac{1}{\sqrt{q}} \right) X_{L_o}
\]
\noindent and an infinitum of initial conditions, one for each element in $\mathcal{S}({\rm ST})$ as shown in Figure~12. Namely, if $U$ denotes the unknot, then $X_{U}=1$ and $X_{\widehat{t^{k_0}{t^{\prime}_1}^{k_1} \ldots {t^{\prime}_n}^{k_n}}}=\Delta^{n-1}\cdot s_{k_n}\ldots s_{k_1}s_{k_0}$.

\subsection{The new basis, $\Lambda$, of $\mathcal{S}({\rm ST})$}\label{lamb}

In \cite{DL2} we give a different basis $\Lambda$ for $\mathcal{S}({\rm ST})$, which was predicted by J. Przytycki and which is described in Eq.~\ref{basis} in open braid form.

\begin{thm}{\cite[Theorem~2]{DL2}}\label{newbasis}
The following set is a $\mathbb{Z}[q^{\pm1}, z^{\pm1}]$-basis for $\mathcal{S}({\rm ST})$:
\begin{equation}\label{basis}
\Lambda=\{t^{k_0}t_1^{k_1}\ldots t_n^{k_n},\ k_i \in \mathbb{Z}\setminus\{0\},\ k_i \geq k_{i+1}\ \forall i,\ n \in \mathbb{N} \}.
\end{equation}
\end{thm}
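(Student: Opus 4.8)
The plan is to leverage the fact, already recorded in Theorem~\ref{turaev} and its braid reformulation, that $\Lambda^{\prime}$ is a $\mathbb{Z}[q^{\pm 1},z^{\pm 1}]$-basis of $\mathcal{S}({\rm ST})$, and to prove the statement by exhibiting an \emph{invertible} change of basis from $\Lambda^{\prime}$ to $\Lambda$. The two sets are indexed by the same combinatorial data, namely finite weakly-decreasing sequences $(k_0,\ldots,k_n)$ of nonzero integers, so there is a canonical bijection $t^{k_0}{t^{\prime}_1}^{k_1}\cdots{t^{\prime}_n}^{k_n}\mapsto t^{k_0}t_1^{k_1}\cdots t_n^{k_n}$ between their members. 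I would grade $\mathcal{S}({\rm ST})$ by the total winding number $w=\sum_i k_i\in H_1({\rm ST})=\mathbb{Z}$, which is preserved both by the Conway skein relation (crossing changes and oriented smoothings fix the homology class) and by isotopy in ST; since the braiding generators $g_i$ and the stabilization strands carry no winding, every rewriting step below respects this grading. Consequently the transition matrix will be block diagonal, with one (infinite) block for each value of $w$.

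Within a fixed $w$-block I would first fix a total order on the monomials. A natural choice refines the grading first by the level $n$ (the number of looping generators) and then lexicographically on the exponent tuple; one checks routinely that this is a total order on the index set of each block. The goal is then to show that, under this order, each element of $\Lambda^{\prime}$ equals its $\Lambda$-partner plus a $\mathbb{Z}[q^{\pm 1},z^{\pm 1}]$-combination of \emph{strictly smaller} elements of $\Lambda$.

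The conversion itself is the technical core, and I would carry it out inside the algebra $H_{1,n}(q)$. Using the quadratic relation in the form $g_j^{-1}=q^{-1}g_j+(q^{-1}-1)$, for instance $t^{\prime}_1=q^{-1}t_1+(q^{-1}-1)\,g_1 t$, and iterating the replacement, one finds $t^{\prime}_i=q^{-i}t_i+(\text{terms carrying braiding tails})$, so that replacing each $t^{\prime}_i$ by $t_i$ yields the expected leading monomial together with correction terms. I would process these systematically: first expand each $\Lambda^{\prime}$-element into the linear basis $\Sigma_n$ of Theorem~\ref{basesH}; then use conjugation (trace property~(1) of Theorem~\ref{tr}, equivalently Markov move~(i) of Theorem~\ref{markov}) together with the commutativity of the $t_i$'s to close the gaps in the indices and to put the exponents in decreasing order as required for $\Lambda$; and finally use conjugation and stabilization (trace rules~(3),(4), equivalently Markov moves~(i),(ii)) to strip the braiding tails. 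The point to verify at every step is that the leading term is sent to the corresponding $\Lambda$-monomial while all newly created terms are strictly lower in the chosen order (lower level, or the same level with a strictly smaller exponent tuple).

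Assembling these expansions gives a block-diagonal transition matrix, each block an infinite lower-triangular matrix whose diagonal entries are the leading coefficients produced above; from the shape of the relations these are powers of $q$, hence units in $\mathbb{Z}[q^{\pm 1},z^{\pm 1}]$. Therefore each block, and so the whole matrix, is invertible, and $\Lambda$ is a basis of $\mathcal{S}({\rm ST})$. The main obstacle I anticipate is establishing \emph{genuine} triangularity: one must prove that closing index gaps, reordering exponents, and eliminating braiding tails never generate a term equal to or above the leading $\Lambda$-monomial. This forces the three reductions to be controlled simultaneously by a single, carefully chosen order and handled by a delicate induction; verifying that the diagonal entries are honest units (rather than merely nonzero) is the other point requiring care.
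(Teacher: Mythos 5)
Your proposal follows essentially the same route as the paper's proof (from \cite{DL2}): expand each element of $\Lambda^{\prime}$ in the bases $\Sigma_n$ of ${\rm H}_{1,n}(q)$, then manage index gaps, order exponents, and eliminate braiding tails via conjugation and stabilization, organizing everything into a block-diagonal (by total exponent sum) infinite lower-triangular transition matrix with powers of $q$ on the diagonal. The difficulties you flag---choosing an order that makes the reduction genuinely triangular and controlling the interleaved reductions until they terminate---are precisely the ones the paper handles with its ordering (Definition~\ref{order}) and the technical inductions of \cite{DL2}.
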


The importance of the new basis $\Lambda$ of $\mathcal{S}({\rm ST})$ lies in the simplicity of the algebraic expression of a braid band move, which extends the link isotopy in ST to link isotopy in $L(p,1)$ and this fact was our motivation for establishing this new basis $\Lambda$ (recall Theorem~1(iv), Remark~\ref{tprt} and Figure~\ref{bbm12}).

\begin{figure}
\begin{center}
\includegraphics[width=1.6in]{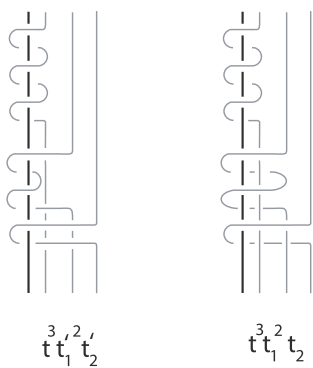}
\end{center}
\caption{Elements in two different bases of $\mathcal{S}({\rm ST})$.}
\label{basel}
\end{figure}

\begin{figure}
\begin{center}
\includegraphics[width=1.3in]{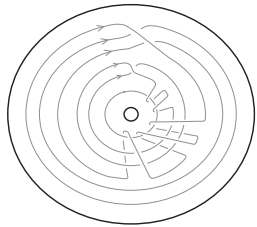}
\end{center}
\caption{An element of the new basis $\Lambda$.}
\label{tbasis}
\end{figure}

Notice that comparing the set $\Lambda$ with the set $\Sigma$, we observe that there are no gaps in the indices of the $t_i$'s and the exponents are in decreasing order. Also, there are no `braiding tails' in the words in $\Lambda$.

\smallbreak

Our method for proving Theorem~\ref{newbasis} is the following:

\smallbreak

$\bullet$ We define total orderings in the sets $\Lambda^{\prime}$ and $\Lambda$,

\smallbreak

$\bullet$ we show that the two ordered sets are related via a lower triangular infinite matrix with invertible elements on the diagonal, and

\smallbreak

$\bullet$ using this matrix, we show that the set $\Lambda$ is linearly independent.

\smallbreak

In order to relate the two sets via a lower triangular infinite matrix we start with elements in the basic set $\Lambda^{\prime}$ and we first convert them into sums of elements in $\Sigma$, containing the linear bases of the algebras ${\rm H}_{1, n}(q)$. These elements consist of two parts: arbitrary monomials in the $t_i$'s followed by `braiding tails' in the bases of the algebras ${\rm H}_n(q)$.
 Then, these elements are converted into elements in the set $\Lambda$ by:

\begin{itemize}
\item[$\bullet$] managing the gaps in the indices, 
\smallbreak
\item[$\bullet$] by ordering the exponents of the $t_i$'s and
\smallbreak
\item[$\bullet$] by eliminating the `braiding tails'. 
\end{itemize}

It is worth mentioning that these procedures are not independent in the sense that when, for example one manages the gaps in the indices of the looping generators $t_i$'s, `braiding tails' may occur and also the exponents of the $t_i$'s may alter. Similarly, when the `braiding tails' are eliminated, gaps in the indices of the $t_i$'s might occur. This is a long procedure that eventually stops and only elements in the set $\Lambda$ remain.

\subsection{An ordering for the sets $\Sigma^{\prime}, \Sigma, \Lambda^{\prime}$ and $\Lambda$}

For defining orderings in the sets $\Sigma, \Sigma^{\prime}, \Lambda$ and $\Lambda^{\prime}$ we need the notion of the {\it index} of a word $w$ in any of these sets, denoted $ind(w)$. In $\Lambda^{\prime}$ or $\Lambda$ $ind(w)$ is defined to be the highest index of the $t_i^{\prime}$'s, resp. of the $t_i$'s in $w$. Similarly, in $\Sigma^{\prime}$ or $\Sigma$, $ind(w)$ is defined as above by ignoring possible gaps in the indices of the looping generators and by ignoring the braiding parts in the algebras $\textrm{H}_{n}(q)$. Moreover, the index of a monomial in $\textrm{H}_{n}(q)$ is equal to $0$.

	\begin{ex} \rm
	\[
	\begin{array}{llllll}
{i.} & ind({t^{\prime}}^{k_0}{t^{\prime}_1}^{k_1}\ldots {{t^{\prime}}_n}^{k_n}) & = & n & = & ind(t^{k_0}{t_1}^{k_1}\ldots {t_n}^{k_n})\\
{ii.} & ind(t^{k_0}t_2^{k_2}\cdot \sigma)                                      & = & 2 & &,\ \sigma \in {\rm H}_n(q)\\
{iii.} & ind(\sigma)                                                           & = & 0 & &,\  \sigma \in {\rm H}_n(q)
	\end{array}
	\]
	\end{ex}

\smallbreak

We now proceed with defining an ordering relation in the sets $\Sigma$ and $\Sigma^{\prime}$, which passes to their respective subsets $\Lambda$ and $\Lambda^{\prime}$:

\begin{defn}{\cite[Definition~2]{DL2}} \label{order}
\rm
Let $w={t^{\prime}_{i_1}}^{k_1}\ldots {t^{\prime}_{i_{\mu}}}^{k_{\mu}}\cdot \beta_1$ and $u={t^{\prime}_{j_1}}^{\lambda_1}\ldots {t^{\prime}_{j_{\nu}}}^{\lambda_{\nu}}\cdot \beta_2$ in $\Sigma^{\prime}$, where $k_t , \lambda_s \in \mathbb{Z}$ for all $t,s$ and $\beta_1, \beta_2 \in H_n(q)$. Then, we define the following ordering in $\Sigma^{\prime}$:

\smallbreak

\begin{itemize}
\item[(a)] If $\sum_{i=0}^{\mu}k_i < \sum_{i=0}^{\nu}\lambda_i$, then $w<u$.

\vspace{.1in}

\item[(b)] If $\sum_{i=0}^{\mu}k_i = \sum_{i=0}^{\nu}\lambda_i$, then:

\vspace{.1in}

\noindent  (i) if $ind(w)<ind(u)$, then $w<u$,

\vspace{.1in}

\noindent  (ii) if $ind(w)=ind(u)$, then:

\vspace{.1in}

\noindent \ \ \ \ ($\alpha$) if $i_1=j_1, \ldots , i_{s-1}=j_{s-1}, i_{s}<j_{s}$, then $w>u$,

\vspace{.1in}

\noindent \ \ \  ($\beta$) if $i_t=j_t$ for all $t$ and $k_{\mu}=\lambda_{\mu}, k_{\mu-1}=\lambda_{\mu-1}, \ldots, k_{i+1}=\lambda_{i+1}, |k_i|<|\lambda_i|$, then $w<u$,

\vspace{.1in}

\noindent \ \ \  ($\gamma$) if $i_t=j_t$ for all $t$ and $k_{\mu}=\lambda_{\mu}, k_{\mu-1}=\lambda_{\mu-1}, \ldots, k_{i+1}=\lambda_{i+1}, |k_i|=|\lambda_i|$ and $k_i>\lambda_i$, then $w<u$,

\vspace{.1in}

\noindent \ \ \ \ ($\delta$) if $i_t=j_t\ \forall t$ and $k_i=\lambda_i$, $\forall i$, then $w=u$.

\end{itemize}

The ordering in the set $\Sigma$ is defined as in $\Sigma^{\prime}$, where $t_i^{\prime}$'s are replaced by $t_i$'s.
\end{defn}

\begin{ex} \rm
\[
\begin{array}{rlllll}
{\rm i.} & {t}^2{t_1^{\prime}}^8 & < &  {t}^6{t_1^{\prime}}^{10} & {\rm since}\ 2+8\ <\ 6+10\ &,\ {\rm \left(Def.~1a\right)}\\\
{\rm ii.} & {t}^2{t_1^{\prime}}^8 & < &  {t}^3{t_1^{\prime}}^4{t_2^{\prime}}^3 & {\rm since}\ ind({t}^2{t_1^{\prime}}^8)<ind({t}^3{t_1^{\prime}}^4{t_2^{\prime}}^3) &,\ {\rm \left(Def.~1b(i)\right)}\\
{\rm iii.} & {t}^2{t_1^{\prime}}^3{t_3^{\prime}}^4{t_4^{\prime}}^8 & < &  {t}^8{t_1^{\prime}}{t_2^{\prime}}{t_4^{\prime}}^7 & {\rm since}\ 3\ =\ i_3\ >\ j_3\ =\ 2 &,\ {\rm \left(Def.~1b(ii)(\alpha)\right)}\\
{\rm iv.} & {t}^2{t_1^{\prime}}^4{t_3^{\prime}}{t_4^{\prime}}^3 & < &   {t}^{14}{t_1^{\prime}}^{-8}{t_3^{\prime}}{t_4^{\prime}}^3 & {\rm since}\ |4|\ <\ |-8| &,\ {\rm \left(Def.~1b(ii)(\beta)\right)}\\
{\rm v.} & t{t_1^{\prime}}^4{t_3^{\prime}}{t_4^{\prime}}^3 & < &  {t}^{10}{t_1^{\prime}}^{-4}{t_3^{\prime}}{t_4^{\prime}}^3 & {\rm since}\ -4\ <\ 4 &,\ {\rm \left(Def.~1b(ii)(\gamma)\right)}
\end{array}
\]
\end{ex}

In order to eventually get to the infinite matrix relating the two basic sets $\Lambda^{\prime}$ and $\Lambda$ we need to define the \textit{subsets of level $k$}, $\Lambda_{(k)}$ and $\Lambda^{\prime}_{(k)}$, of $\Lambda$ and $\Lambda^{\prime}$ respectively ({\cite[Definition~3]{DL2}}), to be the sets:

\begin{equation}
\begin{array}{l}
\Lambda_{(k)}:=\{t_0^{k_0}t_1^{k_1}\ldots t_{m}^{k_m} | \sum_{i=0}^{m}{k_i}=k,\ k_i \in \mathbb{Z}\setminus\{0\},\  k_i \geq k_{i+1}\ \forall i \}\\
\\
\Lambda^{\prime}_{(k)}:=\{{t^{\prime}_0}^{k_0}{t^{\prime}_1}^{k_1}\ldots {t^{\prime}_{m}}^{k_m} | \sum_{i=0}^{m}{k_i}=k,\ k_i \in \mathbb{Z}\setminus\{0\},\  k_i \geq k_{i+1}\ \forall i \}
\end{array}
\end{equation}

\noindent In \cite{DL2} it was shown that the sets $\Lambda_{(k)}$ and $\Lambda^{\prime}_{(k)}$ are totally ordered and well ordered for all $k$ (\cite[Propositions~1 \& 2]{DL2}). Note that the sets $\Lambda$ and $\Lambda^{\prime}$ admit a natural grading: $\Lambda = \underset{k}{\oplus} \Lambda_{(k)}$ and $\Lambda^{\prime} = \underset{k}{\oplus} \Lambda^{\prime}_{(k)}$.

\smallbreak

In the rest of the section we will be using the ordering in the transitions from $\Sigma^{\prime}$ to $\Sigma$ and from $\Sigma$ to $\Lambda$.

\subsection{From $\Lambda^{\prime}$ to $\Sigma$}

In this subsection we convert monomials in the $t^{\prime}_i$'s to expressions containing the $t_i$'s. Full details and related technical lemmas are provided in \cite{DL2}. In order to simplify the expressions in this step we first introduce the following notation.

\begin{nt}\label{nt} \rm
We set $\tau_{i,i+m}^{k_{i,i+m}}:=t_i^{k_i}\ldots t^{k_{i+m}}_{i+m}$ and ${\tau^{\prime}}_{i,i+m}^{k_{i,i+m}}:={t^{\prime}}_i^{k_i}\ldots {t^{\prime}}^{k_{i+m}}_{i+m}$, for $m\in \mathbb{N}$, $k_j\neq 0$ for all $j$.
\end{nt}

 We now introduce the notion of {\it homologous words}, which is crucial for relating the sets $\Lambda^{\prime}$ and $\Lambda$ via a triangular matrix.

\begin{defn}{\cite[Definition~4]{DL2}} \rm
		We say that two words $w^{\prime}\in \Lambda^{\prime}$ and $w\in \Lambda$ are {\it homologous}, denoted $w^{\prime}\sim w$, if $w$ is
		obtained from $w^{\prime}$ by changing $t^{\prime}_i$ into $t_i$ for all $i$.
	\end{defn}
		
	\begin{ex}\rm
	The words $t^2{t_1^{\prime}}^{-1}{t_2^{\prime}}^3$ and $t^2{t_1}^{-1}{t_2}^3$ are homologous. Note also that $\Lambda^{\prime} \ni t \ \sim\ t\in \Lambda$.
	\end{ex}

In \cite[Lemma~11]{DL2} it is shown that the following relations hold in ${\rm H}_{1,n}(q)$ for $k \in \mathbb{Z}\backslash \{0 \}$:


\[
\begin{array}{lll}
{t_m^{\prime}}^{k} & = & q^{-m k}t_{m}^{k} \ + \ \sum_{i}{f_i(q) t_m^{k} w_i} \ +\ \sum_{i}{g_i(q) t^{\lambda_0}t_1^{\lambda_1}\ldots t_m^{\lambda_{m}}u_i}
\end{array}
\]

\noindent where $w_i, u_i \in {\rm H}_{m+1}(q),\ \forall i$, $\sum_{i=0}^{m}{\lambda_i}=k$ and $\lambda_i \geq 0,\ \forall i$, if $k>0$ and $\lambda_i \leq 0,\ \forall i$, if $k<0$.

\smallbreak

Using now these relations, we have that every element in $\Lambda^{\prime}$ can be expressed as a linear sum of each homologous word, the homologous word with `braiding tails' and elements in $\Sigma$ of lower order, as illustrated abstractly in Figure~\ref{fthm7}. More precisely:

\begin{thm}{\cite[Theorem~7]{DL2}}\label{convert}
		The following relations hold in ${\rm H}_{1,n}(q)$ for $k_r \in \mathbb{Z}, r= 0, \ldots, m$:
		$$
		{\tau^{\prime}}_{0,m}^{k_{0,m}}  =  q^{- \sum_{n=1}^{m}{nk_n}}\cdot {\tau}_{0,m}^{k_{0,m}}\ + \ \sum_{i}{f_i(q)\cdot {\tau}_{0,m}^{k_{0,m}}\cdot w_i} \ + \ \sum_{j}{g_j(q)\cdot \tau_j \cdot u_j},\\
		$$
		
		\noindent where $w_i, u_j \in {\rm H}_{m+1}(q)$, for all $i, j$, $\tau_j$ a monomial of the $t_i$'s such that $\tau_j < {\tau}_{0,m}^{k_{0,m}}$ for all $j$ and $f_i, g_j \in \mathbb{C}$, for all $i, j$.
	\end{thm}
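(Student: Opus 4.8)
The plan is to prove the identity by induction on the top index $m$, with the single-generator relation
${t^{\prime}_m}^{k}=q^{-mk}t_m^{k}+\sum_i f_i(q)\,t_m^k w_i+\sum_i g_i(q)\,t^{\lambda_0}t_1^{\lambda_1}\ldots t_m^{\lambda_m}u_i$
of \cite[Lemma~11]{DL2} as the engine, together with two structural facts: the $t_i$ commute pairwise (Remark~\ref{tprt}), and the ordering of Definition~\ref{order} restricts to a well-ordering on each graded piece $\Lambda_{(k)}$ (\cite[Propositions~1~\&~2]{DL2}). The base case $m=0$ is immediate: ${t^{\prime}_0}^{k_0}=t^{k_0}=\tau_{0,0}^{k_{0,0}}$, the prefactor $q^{-\sum_{n=1}^{0}nk_n}$ equals $1$, and there are no error terms.

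For the inductive step I would factor ${\tau^{\prime}}_{0,m}^{k_{0,m}}={\tau^{\prime}}_{0,m-1}^{k_{0,m-1}}\cdot{t^{\prime}_m}^{k_m}$, expand the left factor by the inductive hypothesis and the right factor by \cite[Lemma~11]{DL2}, and multiply the two expansions. The product of the two leading terms is $q^{-\sum_{n=1}^{m-1}nk_n}\tau_{0,m-1}^{k_{0,m-1}}\cdot q^{-mk_m}t_m^{k_m}=q^{-\sum_{n=1}^{m}nk_n}\tau_{0,m}^{k_{0,m}}$, using that $t_m$ commutes with $t_0,\ldots,t_{m-1}$ so that $\tau_{0,m-1}^{k_{0,m-1}}t_m^{k_m}=\tau_{0,m}^{k_{0,m}}$ and that the two $q$-exponents add; this reproduces the claimed leading term exactly.

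It then remains to sort the cross terms into the two admissible shapes $\tau_{0,m}^{k_{0,m}}w_i$ and $\tau_j u_j$ with $\tau_j<\tau_{0,m}^{k_{0,m}}$. The benign terms are those in which a full power $t_m^{k_m}$ survives: for instance $\tau_{0,m-1}^{k_{0,m-1}}\cdot f_i t_m^{k_m}w_i=f_i\,\tau_{0,m}^{k_{0,m}}w_i$, already of tail shape by commutativity of the $t_i$. The terms coming from the redistributed sum $\sum g_i t^{\lambda_0}\ldots t_m^{\lambda_m}u_i$, where $\sum_{i=0}^{m}\lambda_i=k_m$ with all $\lambda_i$ of the sign of $k_m$, fall into two cases: either the whole weight sits at index $m$ ($\lambda_m=k_m$, all other $\lambda_i=0$), again giving a tail term $\tau_{0,m}^{k_{0,m}}u_i$; or $|\lambda_m|<|k_m|$ (possibly $\lambda_m=0$, dropping the index below $m$), in which case, after collecting $t_i^{k_i+\lambda_i}$, clauses (b)(i)--(ii) of Definition~\ref{order} place the resulting monomial strictly below $\tau_{0,m}^{k_{0,m}}$. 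A key bookkeeping point, which I would record as a small lemma, is that the total exponent is preserved, $\sum_{i<m}(k_i+\lambda_i)+\lambda_m=\sum_{i\le m}k_i$, so every term remains in the single graded piece $\Lambda_{(k)}$ and all comparisons happen inside one totally ordered set; a companion observation is that appending $t_m^{k_m}$ is order-preserving, so an inductive lower term $\tau_j<\tau_{0,m-1}^{k_{0,m-1}}$ yields $\tau_j t_m^{k_m}<\tau_{0,m}^{k_{0,m}}$.

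The main obstacle is the interaction of the braiding tails with the looping generator $t_m$: the tails $u_j\in{\rm H}_m(q)$ produced by the inductive hypothesis and the tails $w_i,u_i\in{\rm H}_{m+1}(q)$ produced by \cite[Lemma~11]{DL2} do not commute with $t_m$, so forming products such as $\tau_j u_j\cdot{t^{\prime}_m}^{k_m}$ forces one to slide $u_j$ past $t_m$. I would control these slides with the commutation relations between the $g_i$ and the $t_i$ established in \cite{DL2}; each slide may spawn further braiding terms and occasionally additional looping monomials, but every such byproduct either keeps the homologous top $\tau_{0,m}^{k_{0,m}}$ with a new tail or descends strictly in the ordering. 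Since the ordering well-orders $\Lambda_{(k)}$, this descent terminates, and collecting the surviving terms yields precisely the stated expression. The delicate part is to verify that no slide ever produces a monomial \emph{above} $\tau_{0,m}^{k_{0,m}}$ and that the gaps created when some $k_i+\lambda_i$ vanishes are compatible with the index clauses (b)(i)--(ii) of Definition~\ref{order}.
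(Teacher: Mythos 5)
Your proposal is correct and follows essentially the same route as the paper: the theorem is obtained by iterating the single-generator relations of \cite[Lemma~11]{DL2} over the factors ${t^{\prime}_i}^{k_i}$ (your induction on $m$), multiplying out and sorting the cross terms by the ordering of Definition~\ref{order}, exactly as indicated in the text preceding the statement. Your one ``main obstacle'' --- sliding braiding tails past the looping generator --- is in fact vacuous: the tails produced by the inductive hypothesis lie in ${\rm H}_m(q)$, and every $g_j$ with $j\le m-1$ commutes exactly with both $t_m$ and $t^{\prime}_m$ in ${\rm H}_{1,n}(q)$, so no extra terms are spawned and the collection argument closes just as you describe.
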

	
\begin{figure}[!ht]
\begin{center}
\includegraphics[width=4in]{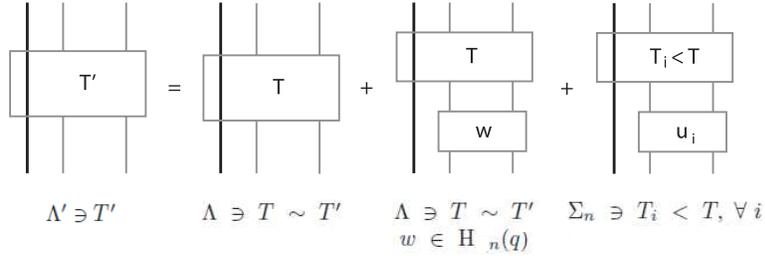}
\end{center}
\caption{Illustrating Theorem~\ref{convert}.}
\label{fthm7}
\end{figure}

	\begin{ex}\label{eg1}\rm
We shall now give examples of monomials in $\Lambda^{\prime}$ converted into sums of elements in $\Sigma$ using technical lemmas from \cite{DL2} (\cite[Lemmas~3, 4, 5, 6, 9 \& 11]{DL2}). The examples provide at the same time the motivation for the subsections that follow.
 
\bigbreak

\begin{itemize}
\item[$i.$] Consider the monomial $t{t_{1}^{\prime}}{t_2^{\prime}}^{-2} \in \Lambda^{\prime}$. We have that:

\[
\begin{array}{lll}
t{t_{1}^{\prime}}{t_2^{\prime}}^{-2} & = & q^3 \cdot tt_{1}t_2^{-2}+ q^4(q^{-1}-1)\cdot tt_{1}t_2^{-2}\cdot g_1^{-1}\ + \\
&&\\
& + & 1\cdot \left[ (q-1)^2g_1^{-1}g_2^{-1}-(q-1)^3g_1^{-2}g_2^{-1}-q^{-1}(q-1)^3g_2g_1^{-1}g_2^{-1}+q^{-1}(q-1)^3g_2^{-1}\right]\ +\\
&&\\
& + & tt_1^{-1}\cdot \left[ (q-1)(q^2-q+1)\cdot g_2^{-1}- (q-1)^2\cdot g_1g_2g_1^{-1}g_2^{-1} \right]+\\
&&\\
& + & tt_2^{-1}\cdot \left[q^2(q-1)\cdot g_2^{-1}+q(q-1)^3\cdot g_2^{-1}-q(q-1)^2\cdot g_2g_1^{-1}g_2^{-1} \right]+\\
&&\\
& + & t_1t_2^{-1}\cdot \left[ q(q-1)\cdot g_2g_1^{-1}g_2^{-1}-q(q-1)^2\cdot g_1^{-1}g_2^{-1} \right]+\\
&&\\
& + & t^{-1}t_1 \cdot \left[-(q-1)\cdot g_2g_1^{-1}g_2^{-1} - q^{-1}(q-1)^2\cdot g_1^{-1}g_2^{-1} \right]
\end{array}
\]

\noindent So, we obtain the homologous word $w=tt_{1}t_2^{-2}$, the word $w$ again followed by the braiding element $g_1^{-1}$ and terms in $\Sigma$ of order less than $w$: since either their index is less that $ind(w)$ (the terms $1$ , $tt_1^{-1}$ and $t^{-1}t_1$), or they contain gaps in the indices (the terms $tt_2^{-1}$ and $t_1t_2^{-1}$).

\smallbreak

\item[$ii.$] A simpler example is the following: 
$$t{t_1^{\prime}}^{-2}\ = \ t{t_1}^{-2}\ -\ (q^{-1}-1)\underline{{t_1}^{-1}}\cdot g_1^{-1}\ +\ (q^{-1}-1){t}^{-1}\cdot \underline{g_1}$$

\noindent We obtained the homologous word $t{t_1}^{-2}$, the element ${t}^{-1}\cdot {g_1}$ in $\Sigma$ comprising the monomial ${t}^{-1}$ with gaps in the indices, followed by `braiding tail' $g_1^{-1}$ and also the lower order term $t^{-1}g_1$ in the ${\rm H}_{n}(q)$-module $\Lambda$.
\end{itemize}

\bigbreak

In Theorem~\ref{convert} note that in the right hand side there are terms which do not belong to the set $\Lambda$. The point now is that these terms are elements in the set of bases $\Sigma$ on the Hecke algebras ${\rm H}_{1, n}(q)$, but, when we are working in $\mathcal{S}({\rm ST})$, which is the knots and links level, such elements must be considered up to conjugation by any generator of the alegbra and up to stabilization moves (recall Theorem~\ref{markov}). Topologically, conjugation corresponds to closing a mixed braid.

\end{ex}

\subsection{ From $\Sigma$ to $\Lambda$: Managing the gaps}\label{stol}

In this subsection we show how to deal with monomials in $\Sigma$ where the looping elements do not have consecutive indices. We call {\it gaps} in monomials of the $t_i$'s any gaps occurring in the indices. After managing, that is eliminating, the gaps we pass to the augmented ${\rm H}_n(q)$-module $\Lambda^{aug}$, which consists of monomials in the $t_i$'s with consecutive indices but not necessarily ordered exponents. 

\begin{defn}{\cite[Definition~3]{DLP}}\label{expsetl}
\rm
We define the sets:
$$\Lambda^{aug}_{n}\ :=\{t_0^{k_0}t_1^{k_1}\ldots t_{n}^{k_n},\ k_i \in \mathbb{Z}^*\},\quad \Lambda^{aug} := \bigcup_{n\in \mathbb{N}} \Lambda^{aug}_{n},$$ 
\noindent and the \textit{subset of level $k$} of $\Lambda^{aug}$ $\Lambda^{aug}_{(k)}$:
$$\Lambda^{aug}_{(k)}:=\{t_0^{k_0}t_1^{k_1}\ldots t_{m}^{k_m} | \sum_{i=0}^{m}{k_i}=k,\ k_i \in \mathbb{Z}^*\}.$$
\end{defn}

Note that in \cite{DLP} the set $\Lambda^{aug}$ is denoted by $L$. Obviously the set $\Lambda$ (Eq.~10) is a subset of $\Lambda^{aug}$.

\begin{nt}\rm
Whenever we talk about a module with coefficients in $\bigcup_{n\in \mathbb{N}}{\rm H}_n(q)$ we shall be denoting it by ${\rm H}_n(q)$-module.
\end{nt}

In what follows for the expressions that we obtain after appropriate conjugations we shall use the notation $\widehat{=}$.

\begin{thm}{\cite[Theorem~8]{DL2}}\label{gp}
Let $\tau$ be a monomial in the $t_i$'s with gaps in the indices. Then we have that:

$$\tau\ \widehat{=}\ \underset{i}{\sum} \tau_i\cdot \beta_i,$$ 

\noindent where $\tau_i \in \Lambda^{aug}$ and $\beta_i \in \bigcup_{n\in \mathbb{N}}{\rm H}_n(q)$ for some $n\in \mathbb{N}$ and for all $i$.
\end{thm}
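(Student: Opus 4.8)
\emph{Proof strategy.} The whole argument is driven by conjugation (move (i) of Theorem~\ref{markov}), which is exactly what the symbol $\widehat{=}$ records, together with a grading that keeps the bookkeeping finite. First I would observe that the defining relations of ${\rm H}_{1,n}(q)$, the relation $t_j=g_jt_{j-1}g_j$ coming from the looping elements, and the commutativity of the $t_i$'s (Remark~\ref{tprt}) are all homogeneous in the letter $t$; hence the total $t$-exponent $k=\sum_r k_r$ of a monomial is invariant under every manipulation below and under conjugation. I may therefore fix a level $k$ and induct there, where the ordering of Definition~\ref{order} restricts to a well-ordering in the sense of \cite[Propositions~1 \& 2]{DL2}. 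The quantity on which I would run the induction is the \emph{number of gaps} of $\tau$, i.e.\ the number of indices missing from $\{0,1,\dots,ind(\tau)\}$; when this number is $0$ the word already lies in $\Lambda^{aug}$ (Definition~\ref{expsetl}) and we are done with $\beta=1$.

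The elementary engine is the identity
\begin{equation*}
g_j^{-1}t_jg_j \;=\; g_j^{-1}(g_jt_{j-1}g_j)g_j \;=\; t_{j-1}g_j^{2} \;=\; q\,t_{j-1}+(q-1)\,t_{j-1}g_j ,
\end{equation*}
which follows at once from $t_j=g_jt_{j-1}g_j$ and the quadratic relation~(\ref{quad}). Read from the left it says that conjugating $t_j$ by $g_j$ lowers its index by one, at the cost of a single correction term in which the same lowered monomial $t_{j-1}$ is followed by the braiding letter $g_j$. Since braiding tails are permitted in the conclusion, such a correction is harmless; what matters is that \emph{both} resulting summands carry a looping generator of index $j-1$ in place of one of index $j$. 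The analogous expansion of a power $t_j^{k}$, $k\in\mathbb{Z}^*$, is precisely the content of the technical lemmas of \cite{DL2} (\cite[Lemmas~9 \& 11]{DL2}): $g_j^{-1}t_j^{k}g_j$ is a combination of monomials whose looping part has index $\le j$ and in which the highest index $j$ has been partially or wholly replaced by $j-1$, each followed by a tail in ${\rm H}_n(q)$.

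With this move in hand the reduction proceeds from the top. Let $m=ind(\tau)$ and consider the highest gap, that is the maximal block of consecutively present indices $t_a^{k_a}\cdots t_m^{k_m}$ sitting immediately above a missing index $a-1$. Every looping generator of $\tau$ of index $\le a-2$ commutes with $g_a,\dots,g_m$ by the braid relations in~(\ref{presB}), so conjugating $\tau$ by the product of braiding generators that slides this block down by one leaves the part below the gap untouched and, by repeated use of the identity above, replaces the block by one supported on indices $a-1,\dots,m-1$ together with correction terms. Collecting the $t$-letters to the front and the $g$-letters (which, lying in the Hecke algebra, are unrestricted) to a tail, every resulting summand has the shape $\tau_i\cdot\beta_i$ with $\tau_i$ a looping monomial of strictly fewer gaps than $\tau$. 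The inductive hypothesis, applied within the fixed level $k$, then rewrites each $\tau_i$ as a sum of $\Lambda^{aug}$-terms with braiding tails, completing the step.

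The main obstacle is the control of the correction terms generated when a whole block, rather than a single isolated generator, must be lowered: because a looping generator does not commute with the neighbouring braiding generator of the next index (indeed $g_{j-1}$ even commutes with $t_j$, so one is forced to conjugate by $g_j$ and drag it through $t_{j-1}$), a single block slide produces a spray of summands in which exponents get redistributed among the $t_i$'s. One must verify that \emph{every} one of these summands is genuinely of the form $\tau_i\beta_i$ with $\tau_i$ strictly lower in the gap-count, and, where exponents are merely reshuffled at equal gap-count, strictly lower in the level-$k$ ordering of Definition~\ref{order}, so that the induction is well-founded. Establishing this uniform strict decrease, and checking that filling the highest gap never manufactures a new gap higher up (which is why one always works downward inside a fixed $t$-degree), is exactly where the detailed computations of \cite[Lemmas~3--6, 9, 11]{DL2} are required.
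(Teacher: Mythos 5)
Your proposal is correct and takes essentially the same route as the paper: the paper's argument (illustrated in its two-gap example and proved in \cite{DL2}) rests on exactly your engine---the relation $t_j=g_jt_{j-1}g_j$ combined with the quadratic relation~(\ref{quad}), the commutation of $g_i$ with the lower-index $t_j$'s, conjugation ($\widehat{=}$) to sweep the braiding letters into tails, and an induction whose termination is guaranteed by the well-ordering of Definition~\ref{order} on each level, with the hard bookkeeping delegated to the technical lemmas of \cite{DL2}, just as you do. The only difference is organizational and immaterial: the paper fills the lowest gap first by dragging a single higher-index looping generator down into it, whereas you slide the entire block above the highest gap down by one.
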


Theorem~\ref{gp} is best demonstrated in the following example on a word with two gaps. Note that we underline expressions which are crucial for the next step.

\begin{ex}\rm For the 2-gap word $t^{k_0}t_1^{k_1}t_3t_{5}^2t_6^{-1}\in \Sigma$ we have:
\[
\begin{array}{llllll}
t^{k_0}\underline{t_1^{k_1}t_3}t_{5}^2t_6^{-1} & = & t^{k_0}t_1^{k_1}\underline{g_3t_2g_3}t_{5}^2t_6^{-1} & = & g_3t^{k_0}t_1^{k_1}t_2t_{5}^2t_6^{-1}g_3 & \widehat{=}\\
&&&&&\\
& \widehat{=} & t^{k_0}t_1^{k_1}\underline{t_2t_{5}^2}t_6^{-1}g_3^2 & = & t^{k_0}t_1^{k_1}t_2\underline{t_{5}}t_{5}t_6^{-1}g_3^2 & =\\
&&&&&\\
& = & t^{k_0}t_1^{k_1}t_2\underline{g_5g_4}t_3g_4g_5t_{5}t_6^{-1}g_3^2 & = & \underline{g_5g_4}t^{k_0}t_1^{k_1}t_2t_3g_4g_5t_{5}t_6^{-1}g_3^2 & 
\widehat{=}\\
&&&&&\\
&&& \widehat{=} & t^{k_0}t_1^{k_1}t_2t_3\underline{g_4g_5t_{5}}t_6^{-1}g_3^2g_5g_4 &  =\\
&&&&&\\
& = & t^{k_0}t_1^{k_1}t_2t_3\ \big[\ q^2t_3g_4g_5 & + & q(q-1)t_4g_5 + (q-1)t_5g_4\ \big] \ t_6^{-1}g_3^2g_5g_4 & =\\
&&&&&\\
& = & q^2t^{k_0}t_1^{k_1}t_2t_3^2\underline{g_4g_5t_6^{-1}}g_3^2g_5g_4 & + & q(q-1)t^{k_0}t_1^{k_1}t_2t_3t_4\underline{g_5t_6^{-1}}g_3^2g_5g_4& + \\
&&&&&\\
&  & & + & (q-1)t^{k_0}t_1^{k_1}t_2t_3t_5\underline{g_4t_6^{-1}}g_3^2g_5g_4 & = \\
&&&&&\\
 & = & q^2t^{k_0}t_1^{k_1}t_2t_3^2\underline{t_6^{-1}}g_4g_5g_3^2g_5g_4 & + & (q-1)t^{k_0}t_1^{k_1}t_2t_3\underline{t_5}t_6^{-1}g_4g_3^2g_5g_4& +\\
&&&&&\\
 & &  & + & q(q-1)t^{k_0}t_1^{k_1}t_2t_3t_4\underline{t_6^{-1}}g_5g_3^2g_5g_4 & \widehat{=}\\
\end{array}
\]

\[
\begin{array}{lllll}
\widehat{=} & q^2t^{k_0}t_1^{k_1}t_2t_3^2\underline{g_6^{-1}g_5^{-1}}t_4^{-1}g_5^{-1}g_6^{-1}g_4g_5g_3^2g_5g_4 & + & q(q-1)t^{k_0}t_1^{k_1}t_2t_3t_4\underline{g_6^{-1}}t_5^{-1}g_6^{-1} g_5g_3^2g_5g_4 & +\\
&&&&\\
&&+ & (q-1)t^{k_0}t_1^{k_1}t_2t_3\underline{g_5}{t_4}\underline{g_5}t_6^{-1}\cdot (g_4g_3^2g_5g_4)& = \\
&&&&\\
= & q^2g_6^{-1}g_5^{-1}t^{k_0}t_1^{k_1}t_2t_3^2t_4^{-1}g_5^{-1}g_6^{-1}g_4g_5g_3^2g_5g_4 & + & q(q-1)g_6^{-1}t^{k_0}t_1^{k_1}t_2t_3t_4t_5^{-1}g_6^{-1} g_5g_3^2g_5g_4 & +\\
&&&&\\
&&+ & (q-1){g_5}t^{k_0}t_1^{k_1}t_2t_3{t_4}t_6^{-1}{g_5}g_4g_3^2g_5g_4 & \widehat{=} \\
&&&&\\
\widehat{=} & q^2t^{k_0}t_1^{k_1}t_2t_3^2t_4^{-1}g_5^{-1}g_6^{-1}g_4g_5g_3^2g_5g_4g_6^{-1}g_5^{-1} & + & q(q-1)t^{k_0}t_1^{k_1}t_2t_3t_4t_5^{-1}g_6^{-1} g_5g_3^2g_5g_4g_6^{-1} & + \\
&&&&\\
&& + & (q-1)t^{k_0}t_1^{k_1}t_2t_3{t_4}\underline{t_6^{-1}}{g_5}\cdot (g_4g_3^2g_5g_4{g_5}) & = \\
&&&&\\
= & q^2t^{k_0}t_1^{k_1}t_2t_3^2t_4^{-1}g_5^{-1}g_6^{-1}g_4g_5g_3^2g_5g_4g_6^{-1}g_5^{-1} & +& q(q-1)t^{k_0}t_1^{k_1}t_2t_3t_4t_5^{-1}g_6^{-1} g_5g_3^2g_5g_4g_6^{-1} & +\\
&&&&\\
&& + & (q-1)t^{k_0}t_1^{k_1}t_2t_3{t_4}\underline{g_6^{-1}}t_5^{-1}g_6^{-1}{g_5}g_4g_3^2g_5g_4{g_5}&  \widehat{=}\\
&&&&\\
\widehat{=} & q^2t^{k_0}t_1^{k_1}t_2t_3^2t_4^{-1}g_5^{-1}g_6^{-1}g_4g_5g_3^2g_5g_4g_6^{-1}g_5^{-1} & + & q(q-1)t^{k_0}t_1^{k_1}t_2t_3t_4t_5^{-1}g_6^{-1} g_5g_3^2g_5g_4g_6^{-1} &+\\
&&&&\\
&& + & (q-1)t^{k_0}t_1^{k_1}t_2t_3{t_4}t_5^{-1}g_6^{-1}{g_5}g_4g_3^2g_5g_4{g_5}{g_6^{-1}}
\end{array}
\]

\end{ex}

\subsection{From the ${\rm H}_n(q)$-module $\Sigma$ to the ${\rm H}_n(q)$-module $\Lambda$: Ordering the exponents}

By Theorem~\ref{gp} we have to deal with elements in $\Lambda^{aug}$, where the looping generators have consecutive indices but their exponents are not in decreasing order, followed by `braiding tails'. We show that these elements are conjugate equivalent to sums of elements in the $\textrm{H}_n(q)$-module $\Lambda$, namely, elements in $\Lambda$ followed by `braiding tails'.

\begin{thm}{\cite[Theorem~9]{DL2}}\label{exp}
For an element in the ${\rm H}_n(q)$-module $\Lambda^{aug}$ we have that:

$$\tau_{0,m}^{k_{0,m}}\cdot w\ \widehat{=}\ \sum_{j}{\tau_{0,j}^{\lambda_{0,j}}\cdot w_j},$$

\noindent where $\tau_{0,j}^{\lambda_{0,j}} \in \Lambda$ and $w, w_j \in \bigcup_{n\in \mathbb{N}}{\rm H}_n(q)$ for all $j$.
\end{thm}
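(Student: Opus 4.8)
The plan is to prove the statement by induction on a well-founded complexity measure attached to the exponent sequence of $\tau_{0,m}^{k_{0,m}}=t_0^{k_0}t_1^{k_1}\ldots t_m^{k_m}$, reducing the reordering of the exponents into the decreasing pattern required by $\Lambda$ to a finite sequence of adjacent swaps, each of which I realize by conjugation and each of which produces only strictly lower-order correction terms. The base case is the one where $k_0\ge k_1\ge\cdots\ge k_m$ already holds, so that the looping part lies in $\Lambda$ and the word is of the desired form. Since the looping generators commute (recall Remark~\ref{tprt}), the target order is reached by bubbling adjacent out-of-order pairs, so it suffices to treat a single pair $t_i^{k_i}t_{i+1}^{k_{i+1}}$ with $k_i<k_{i+1}$ sitting inside the word, and to show that after conjugation it becomes the locally sorted pair plus terms that are strictly lower in the ordering of Definition~\ref{order}.

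The heart of the argument is this elementary \emph{swap lemma}. Conjugating the whole word by $g_{i+1}^{-1}$ and using the defining identity $t_{i+1}=g_{i+1}t_ig_{i+1}$ together with the quadratic relation (\ref{quad}) (equivalently $g_{i+1}^{2}=(q-1)g_{i+1}+q$ and $g_{i+1}^{-2}=q^{-1}+(q^{-1}-1)g_{i+1}^{-1}$), one computes to leading order $g_{i+1}^{-1}t_{i+1}g_{i+1}=q\,t_i+(q-1)t_ig_{i+1}$ and $g_{i+1}^{-1}t_ig_{i+1}=q^{-1}t_{i+1}+(q^{-1}-1)g_{i+1}^{-1}t_{i+1}$. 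Raising to the powers $k_i,k_{i+1}$ and using commutativity of the $t_j$'s gives
$$ t_i^{k_i}t_{i+1}^{k_{i+1}} \ \widehat{=}\ q^{\,k_{i+1}-k_i}\, t_i^{k_{i+1}}t_{i+1}^{k_i} \ +\ (\text{lower-order terms}), $$
where the scalar power of $q$ and all surplus braiding generators are swept into the braiding tail, and the leading summand now carries the larger exponent on the smaller index. The corrections come exactly from the non-leading summands of the $g_{i+1}^{\pm2}$ expansions threaded through the powers; by inspection each has either a strictly smaller index, or a gap in its indices, or a strictly smaller exponent profile, hence sits strictly below $t_i^{k_i}t_{i+1}^{k_{i+1}}$ in the ordering of Definition~\ref{order}.

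With the swap lemma available I would run the induction. After one swap the leading term has strictly fewer inversions in its exponent sequence, while every correction is strictly lower in the (well-founded) order on $\Lambda_{(k)}$. A correction that contains a gap is first returned to $\Lambda^{aug}$ by Theorem~\ref{gp}, which rewrites it as a sum of elements of $\Lambda^{aug}$ followed by braiding tails lying in the same graded piece $\Lambda^{aug}_{(k)}$; the inductive hypothesis then orders their exponents. Because $\Lambda_{(k)}$ and $\Lambda^{aug}_{(k)}$ are well ordered (as established in \cite{DL2}) and every term fed back into the process is strictly smaller, the procedure terminates, and assembling the surviving summands yields the asserted expression $\sum_j \tau_{0,j}^{\lambda_{0,j}}\cdot w_j$ with each $\tau_{0,j}^{\lambda_{0,j}}\in\Lambda$ and $w_j\in\bigcup_n{\rm H}_n(q)$.

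The main obstacle is the bookkeeping of the interaction of the swapping conjugation with the generators it does not commute with: $g_{i+1}^{\pm1}$ fixes $t_j$ for $j\le i-1$, but it genuinely acts on $t_{i+2},\ldots,t_m$ and on the existing braiding tail $w$. I would control this by performing the swaps starting from the smallest out-of-order index and repeatedly pushing to the right, via $\widehat{=}$, every braiding generator that the conjugation liberates, so that the disturbed higher part is re-expressed once more as an element of $\Lambda^{aug}$ carrying a braiding tail, to which Theorem~\ref{gp} and the inductive hypothesis again apply. Verifying that this interleaving of swaps and gap-management strictly decreases the complexity measure at \emph{every} stage, and therefore cannot cycle, is the crux of the proof, and it is precisely here that the well-ordering of $\Lambda_{(k)}$ is indispensable.
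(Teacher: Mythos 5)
Your architecture --- adjacent-pair swaps realized by conjugation with $g_{i+1}^{\pm1}$, an induction governed by the ordering of Definition~\ref{order}, interleaving with Theorem~\ref{gp} when gaps reappear, and termination by well-ordering --- is the same as that of the proof in \cite{DL2} which the paper outlines (Remark~\ref{rm4} and the example following the statement), and your leading-order computation is correct: the sorted monomial does appear with coefficient $q^{k_{i+1}-k_i}$. The genuine gap is your claim that the corrections in the swap lemma are ``by inspection'' strictly lower: that is false. Take the simplest case $t\,t_1^2$ (so $i=0$, $k_0=1<2=k_1$). Conjugation gives
\[
g_1\,(t\,t_1^2)\,g_1^{-1}\ =\ t\,t_1\,g_1^2\,t\ =\ q\,t^2t_1\ +\ (q-1)\,t\,t_1^2\,g_1^{-1},
\]
and the correction $(q-1)\,t\,t_1^2\,g_1^{-1}$ has looping part $t\,t_1^2$ --- the very monomial you started with. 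Since the ordering of Definition~\ref{order} disregards braiding tails (case~($\delta$)), this term is \emph{equal} in order to the original, not strictly below it: it has no smaller index, no gap, and no smaller exponent profile. It drops below $t\,t_1^2$ only after a further conjugation of a different kind, namely $t\,t_1^2g_1^{-1}=t\,t_1g_1t\ \widehat{=}\ t^2t_1g_1$ (using $t_1g_1^{-1}=g_1t$ and cycling the last $t$ to the front). Proving that such secondary conjugations always exist, always strictly decrease the order, and do not spoil termination when they recreate tails and gaps that must be fed back into Theorem~\ref{gp}, is precisely the content of the ``complex and technical inductions'' of \cite{DL2}; your proposal explicitly defers this point (``the crux''), so the central step is asserted rather than proved.

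Two lesser inaccuracies. First, your bookkeeping worry rests on a false premise: $g_{i+1}$ commutes with $t_j$ for every $j\notin\{i,i+1\}$, in particular with $t_{i+2},\ldots,t_m$ (for instance $g_1t_2=g_1g_2g_1tg_1g_2=g_2g_1g_2tg_1g_2=g_2g_1tg_2g_1g_2=g_2g_1tg_1g_2g_1=t_2g_1$), so the only thing disturbed by a swap is the existing tail $w$, and a conjugated tail is still a tail; the extra machinery you introduce for the ``disturbed higher part'' is unnecessary, though harmless. Second, the well-ordering stated in the paper is for $\Lambda_{(k)}$ and $\Lambda^{\prime}_{(k)}$, whereas your corrections live in the level-$k$ part of $\Sigma$ (gaps and tails included); the Noetherian induction you invoke therefore needs well-foundedness of the ordering on that larger set, which you should prove or cite rather than attribute to the statement about $\Lambda_{(k)}$ and $\Lambda^{aug}_{(k)}$.
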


\begin{ex}\rm
Consider the element $tt_1^2t_2^3 \in \Lambda^{aug}$ and apply Theorem~\ref{exp} on the first `bad' exponent occurring in the word, starting from right to left. In that way we obtain a word with one less `bad' exponent, so applying Theorem~\ref{exp} again we obtain elements in the set $\Lambda$ followed by braiding tails. More precisely:
$$
tt_1^2t_2^3 \ \widehat{=} \ a\cdot t^3t_1^2t_2\cdot u_1\ +\ b\cdot t^2t_1^2t_2^2\cdot u_2\ +\ c\cdot t^4t_1t_2\cdot u_3
$$
\noindent where $u_1, u_2, u_3 \in {\rm H}_{3}(q)$, for all $i$ and $a, b, c \in \mathbb{C}[q^{\pm 1}]$. 

\end{ex}

\subsection{From the ${\rm H}_n(q)$-module $\Lambda$ to $\Lambda$: Eliminating the tails}

So far we have seen how to convert elements in the basis $\Lambda^{\prime}$ to sums of elements in $\Sigma$ and then, using conjugation, how these elements are expressed as sums of elements in the $\textrm{H}_n(q)$-module $\Lambda$. We now present results on how using conjugation and stabilization moves all these elements in the $\textrm{H}_n(q)$-module $\Lambda$ are expressed as sums of elements in the set $\Lambda$ with scalars in the field $\mathbb{C}$. We will use the symbol $\simeq$ when a stabilization move is performed and $\widehat{\simeq}$ when both stabilization moves and conjugation are used. More precisely, in \cite{DL2} we prove the following:

\begin{thm}{\cite[Theorem~10]{DL2}} \label{tails}
For a word in the ${\rm H}_n(q)$-module, $\Lambda$ we have:

$$\tau_{0,m}^{k_{0,m}}\cdot w_n\ \widehat{\simeq}\  \sum_{j}{f_j(q,z)\cdot \tau_{0,u_j}^{v_{0,u_j}}},$$

\noindent such that $\sum{v_{0,u_j}}=\sum{k_{0,m}}$, $\tau_{0,u_j}^{v_{0,u_j}} \in \Lambda^{aug}$ and $\tau_{0,u_j}^{v_{0,u_j}} < \tau_{0,m}^{k_{0,m}}$ for all $j$.
\end{thm}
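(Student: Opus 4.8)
The plan is to strip the braiding tail $w_n$ off $\tau_{0,m}^{k_{0,m}}\cdot w_n$ one strand at a time, removing the top strand by a stabilization move once the top braiding generator has been conjugated into a position where it can be destabilized. I would argue by induction on the number of strands $n$. First I put the tail $w_n$ into Jones' inductive basis of ${\rm H}_n(q)$ \cite{Jo}; by linearity it suffices to treat a single basis word, and such a word has all of its $g_{n-1}$-content concentrated in a single descending block $g_{n-1}g_{n-2}\cdots g_j$ at the top, the remaining factors lying in ${\rm H}_{n-1}(q)$. Thus I may assume $\tau_{0,m}^{k_{0,m}}\cdot w_n=t_0^{k_0}\cdots t_m^{k_m}\cdot v\cdot(g_{n-1}\cdots g_j)$ with $v\in{\rm H}_{n-1}(q)$ and $j\le n-1$ (the block possibly empty).

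Next I would peel strand $n$. If the top looping index satisfies $m\le n-2$, strand $n$ is touched only by the tail: using conjugation (move (i) of Theorem~\ref{markov}) I rotate the single occurrence of $g_{n-1}$ to the end of the word, commuting it freely past every $t_i$ with $i\le n-3$ and, when $i=n-2$, using the defining relation $t_{n-1}=g_{n-1}t_{n-2}g_{n-1}$ (see Eq.~\eqref{lgen}) together with the quadratic relation \eqref{quad}; a stabilization move (move (ii)) then deletes strand $n$ and drops me to $n-1$ strands, where the inductive hypothesis applies. The essential case is $m=n-1$, where the top looping block $t_{n-1}^{k_{n-1}}$ itself lives on strand $n$. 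Here I again conjugate by $g_{n-1}^{\mp1}$ and rewrite using $t_{n-1}=g_{n-1}t_{n-2}g_{n-1}$ and \eqref{quad}: each such step either liberates strand $n$ for a stabilization move or trades one unit of the top exponent $k_{n-1}$ for data at index $n-2$, and in both cases the resulting monomials have either strictly smaller index or a strictly smaller top-exponent vector than $\tau_{0,m}^{k_{0,m}}$.

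All the rewriting above preserves the exponent sum $\sum k_i$ (the homology class in $\pi_1({\rm ST})$), so every term produced sits in the same level $\Lambda^{aug}_{(k)}$ as the input, and by the previous paragraph each is strictly below $\tau_{0,m}^{k_{0,m}}$ in the ordering of Definition~\ref{order} (lower index, or equal index with smaller exponents). Since each level $\Lambda^{aug}_{(k)}$ is well ordered \cite[Propositions~1 \& 2]{DL2}, this strict descent guarantees that the recursion terminates after finitely many steps and collects into a finite sum $\sum_j f_j(q,z)\cdot\tau_{0,u_j}^{v_{0,u_j}}$ with $\tau_{0,u_j}^{v_{0,u_j}}\in\Lambda^{aug}$; the $q$-dependence of the coefficients comes from the quadratic relation \eqref{quad} and the $z$-dependence from the stabilization moves (rule (3) of Theorem~\ref{tr}). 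Any gaps in the indices or out-of-order exponents created while commuting the braiding generators are strictly smaller, hence may be fed back into Theorems~\ref{gp} and~\ref{exp} without endangering termination.

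The hard part is the case $m=n-1$: commuting the braiding generator $g_{n-1}$ past the full power $t_{n-1}^{k_{n-1}}$ for arbitrary $k_{n-1}\in\mathbb{Z}$ (both signs) is where the genuine computation lives, because $g_{n-1}$ does not commute with $t_{n-1}$ and powers of $t_{n-1}$ do not conjugate cleanly under $g_{n-1}$. The delicate point is to verify, uniformly in the sign and magnitude of $k_{n-1}$, that every correction term thrown off by these commutations is \emph{strictly} lower in the ordering of Definition~\ref{order}; establishing this simultaneous decrease — and checking that the transient gaps it creates reduce in the same well-founded measure — is the technical core on which the proof rests.
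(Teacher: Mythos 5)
Your overall strategy is exactly the one this paper sketches and defers to \cite{DL2} for: isolate the single occurrence of $g_{n-1}$ via Jones' inductive basis, peel the top strand by conjugation plus a stabilization move, let the quadratic relation \eqref{quad} do the bookkeeping, and recycle the transient gaps and disordered exponents through Theorems~\ref{gp} and~\ref{exp}. So there is no divergence of method. The problem is that your proposal carries out only the easy half of this scheme and, by your own admission, leaves the case $m=n-1$ with arbitrary exponent $k_{n-1}$ as an unverified assertion. That case \emph{is} the content of Theorem~\ref{tails}: it is where \cite{DL2} places its chain of technical lemmas and where Remark~\ref{rm4} warns that ``complex and technical inductions'' are required. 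Declaring it ``the technical core on which the proof rests'' names the gap; it does not close it.

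There is also a concrete error in the part you do argue. Your termination mechanism is strict descent: every term produced is claimed to lie strictly below $\tau_{0,m}^{k_{0,m}}$ in the ordering of Definition~\ref{order}, with well-orderedness of the levels finishing the proof. This is false, and not only in degenerate cases: for $\tau=tt_1\in\Lambda$ with tail $g_1$ one has
\[
tt_1\cdot g_1\ =\ (q-1)\,tt_1\ +\ q\,tg_1t\ \ \widehat{=}\ \ (q-1)\,tt_1\ +\ q\,t^2g_1\ \ \widehat{\simeq}\ \ (q-1)\,tt_1\ +\ qz\,t^2,
\]
so the input monomial $tt_1$ reappears; likewise $\tau\cdot g_{n-1}\,\widehat{\simeq}\,z\,\tau$ whenever $ind(\tau)\le n-2$, and in the paper's own worked example the word $t^5t_1^{-1}\cdot g_2^{-1}g_1g_2^{-1}$ returns the term $z\cdot t^5t_1^{-1}$. (This shows, incidentally, that the inequality in the statement can only be read as $\le$: the homologous monomial survives, and its coefficient --- here $q-1$, or $z$ --- is precisely what the triangular-matrix argument behind Theorem~\ref{newbasis} must keep track of on the diagonal.) Consequently strict descent in a well-ordered set cannot drive your recursion; the measure has to be lexicographic --- number of strands first, then the data of the top exponent and the tail --- and verifying that the interleaved calls to Theorems~\ref{gp} and~\ref{exp} respect such a measure is exactly the missing induction. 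A smaller slip in the same spirit: in the case $m\le n-2$ you propose to push $g_{n-1}$ past $t_{n-2}$ via $t_{n-1}=g_{n-1}t_{n-2}g_{n-1}$; this creates a $t_{n-1}$ and \emph{raises} the index, throwing you back into the hard case. The correct move is simply to conjugate the trailing block $g_{n-2}\cdots g_j$ cyclically to the front, so that $g_{n-1}$ becomes the last letter, and then destabilize; no interaction with the $t_i$'s is needed there.
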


\begin{ex}\rm In this example we demonstrate how to eliminate the `braiding tail' in a word.
\[
\begin{array}{lcl}
t^{3}\underline{t_1^2}t_{2}^{-1}g_1^{-1} & = & t^{3}t_1t_{2}^{-1}\underline{t_1g_1^{-1}}\ = \ t^{3}t_1t_{2}^{-1} g_1 \underline{t} \ \widehat{=} \ t^4\underline{t_1}t_{2}^{-1} g_1\ =\ t^4t_{2}^{-1}\underline{t_1 g_1}\ =\\
&&\\
& = &  (q-1)  t^4\underline{t_1}t_{2}^{-1}\ +\ q t^4t_{2}^{-1} g_1 \underline{t}\ \widehat{=}\ (q-1) t^5\underline{t_2^{-1}}g_1^{2}\ +\ qt^5 \underline{t_{2}^{-1}} g_1\ =\\
&&\\
& = & (q-1) t^5t_1^{-1}g_2^{-1}g_1^{2}g_2^{-1}\ +\ q t^5t_1^{-1}g_2^{-1}g_1g_2^{-1}.\\
\end{array}
\]

\noindent We have that:
\[
\begin{array}{lcl}
g_2^{-1}g_1^2g_2^{-1} & = & q^{-2}(q-1)g_1g_2g_1\ -\ (q^{-1}-1)^2g_2g_1\ -\ (q^{-1}-1)^2 g_1g_2\ +\\
&&\\
& + & (q-1)(q^{-1}-1)^2g_1\ +\ q(q^{-1}-1)g_2^{-1}\ +\ 1,\\
&&\\
g_2^{-1}g_1g_2^{-1} & = & q^{-2}g_1g_2g_1\ +\ q^{-1}(q^{-1}-1)g_2g_1\ +\  q^{-1}(q^{-1}-1) g_1g_2\ +\\
&&\\
& + & (q^{-1}-1)^2g_1,\\
\end{array}
\]

\noindent and so
\[
\begin{array}{rcl}
(q-1)\cdot t^5t_1^{-1}g_2^{-1}g_1^{2}g_2^{-1} & \widehat{\simeq} & \left( (q-1) + q^{-1}(q-1)^3 \right)\cdot t^5t_1^{-1} - q^{-3}(q^{-1}-1)^3z^2\cdot t^4 +\\
&&\\
& + & 3q^{-3}(q-1)^4z\cdot t^4 - q^{-1}(q-1)^2z\cdot t^4 - q^{-3}(q-1)^5\cdot t^4,\\
&&\\
q\cdot t^5t_1^{-1}g_2^{-1}g_1g_2^{-1} & \widehat{\simeq} & z \cdot t^5t_1^{-1} + q^{-1}(q^{-1}-1)z^2\cdot t^4 + 2(q^{-1}-1)^2z\cdot t^4 +\\
&&\\
& + &  q(q^{-1}-1)^3\cdot t^4.\\
\end{array}
\]

\end{ex}

\begin{remark}\label{rm4}\rm
This is a long procedure, since eliminating the tails will give rise to gaps in the indices again. Recall however that when managing gaps in the indices and ordering the exponents of the $t_i$'s we also obtain `braiding tails'. This is a long procedure which, as shown in \cite{DL2} using complex and technical inductions, this procedure eventually terminates and only elements in $\Lambda$ remain. This means that $\Lambda$ is a generating set for $\mathcal{S}({\rm ST})$. This procedure is abstractly demonstrated in Figure~\ref{alg}. In the figure, we start with a word $\tau^{\prime}$ in the old basis of $\mathcal{S}({\rm ST})$, $\Lambda^{\prime}$, and applying Theorems~8--10 we end up with a linear combination of the homologous word $\tau\in \Lambda$ and of elements $\lambda_i\in \Lambda$ smaller than $\tau$.
\end{remark}

\begin{figure}
\begin{center}
\includegraphics[width=5.2in]{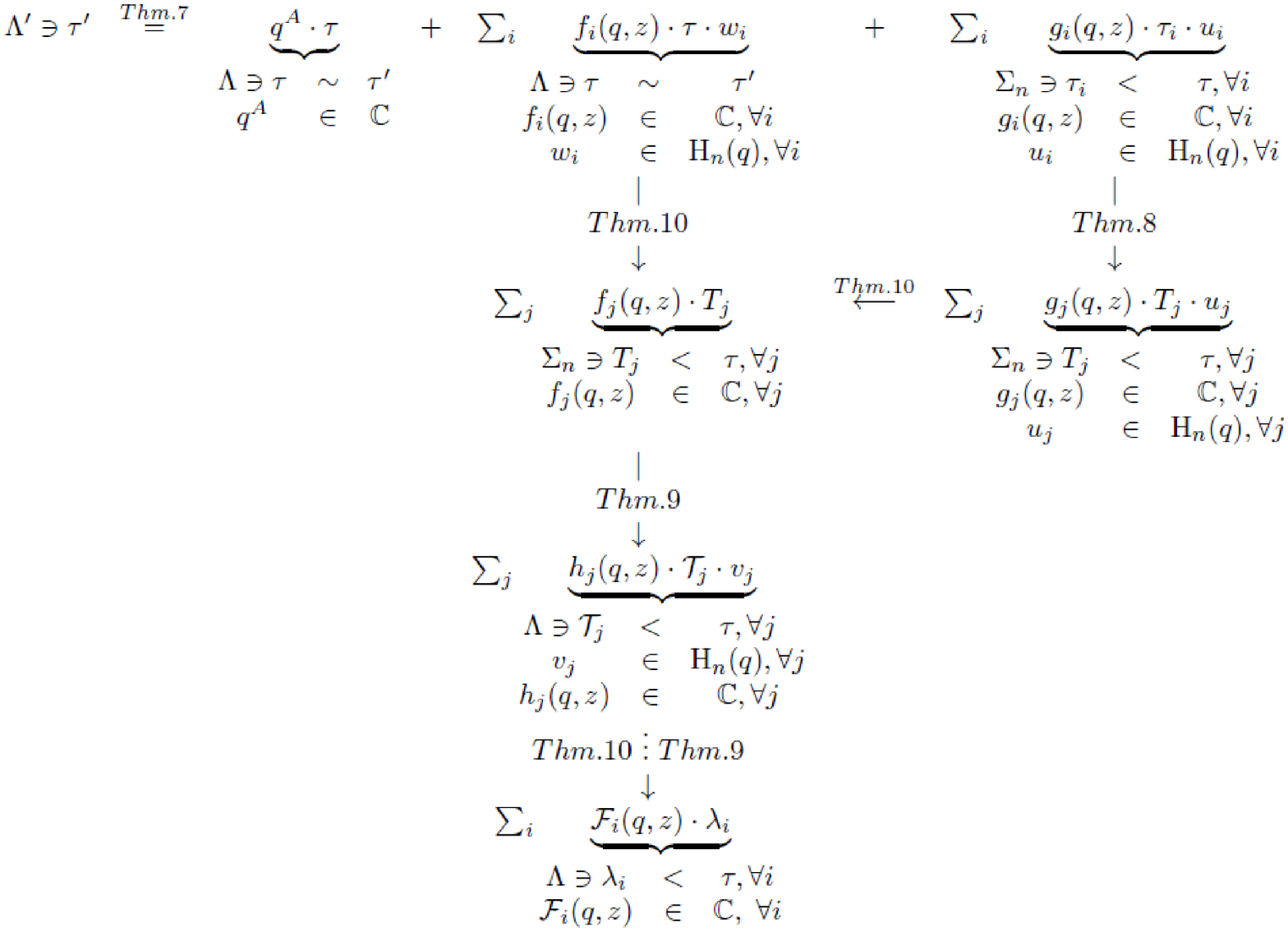}
\end{center}
\caption{From $\Lambda^{\prime}$ to $\Lambda$.}
\label{alg}
\end{figure}

\subsection{The infinite matrix}

With the orderings given in Definition~\ref{order}, in \cite{DL2} we showed that the infinite matrix converting elements of the basis $\Lambda^{\prime}$ of $\mathcal{S}({\rm ST})$ to elements of the set $\Lambda$ is a block diagonal matrix, where each block corresponds to a subset of $\Lambda^{\prime}$ of level $k$ and it is an infinite lower triangular matrix with invertible elements in the diagonal. This constitutes our strategy for proving Theorem~\ref{newbasis}. More precisely, fixing the level $k$ of a subset of $\Lambda^{\prime}$, the proof of Theorem~\ref{newbasis} is based on the following:

\smallbreak

\begin{itemize}
\item[(1)] A monomial $w^{\prime} \in \Lambda_{(k)}^{\prime} \subseteq \Lambda^{\prime}$ can be expressed as linear combinations of elements in $\Lambda_{(k)} \subseteq \Lambda$, $v_i$, followed by monomials in $\textrm{H}_n(q)$, with scalars in $\mathbb{C}$ such that there exists $j: v_j=w\sim w^{\prime}$.
\smallbreak
\item[(2)] Applying conjugation and stabilization moves on all $v_i$'s results in elements $u_i$ in $\Lambda_{(k)}$, such that $u_i < v_i$ for all $i$.
\smallbreak
\item[(3)] The coefficient of $w$ is an invertible element in $\mathbb{C}$.
\smallbreak
\item[(4)] $\Lambda_{(k)} \ni w < u \in \Lambda_{(k+1)}$.
\smallbreak
\item[(5)] Using this infinite diagonal matrix, in \cite[Theorem~11]{DL2} we showed that the set $\Lambda$ is linearly independent. Hence, using the above and Remark~\ref{rm4}, $\Lambda$ forms a basis for the HOMFLYPT skein module of ST.
\end{itemize}

\section{Topological steps toward $\mathcal{S}\left(L(p,1)\right)$}

We now return to our initial aim, that is, the computation of the HOMFLYPT skein module of a lens space $L(p,1)$. As explained in the Introduction, in order to compute $\mathcal{S}(L(p,1))$ we need to normalize the invariant $X$ (recall Theorem~5) by forcing it to satisfy all possible braid band moves ({\it bbm}), recall Eq.~1. At this point the reader should recall the discussion in the Introduction culminating to Equation~2. In order to simplify this system of equations, in \cite{DLP} we first show that performing a {\it bbm} on a mixed braid in $B_{1, n}$ reduces to performing {\it bbm}'s on elements in the canonical basis, $\Sigma_n^{\prime}$, of the algebra ${\rm H}_{1,n}(q)$ and, in fact, on their first moving strand. We then reduce the equations obtained from elements in $\Sigma^{\prime}$ to equations obtained from elements in $\Sigma$. In order now to reduce further the computation to elements in the basis $\Lambda$ of $\mathcal{S}({\rm ST})$, we first recall that elements in $\Sigma$ consist in two parts: a monomial in $t_i$'s with possible gaps in the indices and unordered exponents, followed by a `braiding tail' in the basis of ${\rm H}_n(q)$. So, we first manage the gaps in the indices of the looping generators of elements in $\Sigma$, obtaining elements in the augmented ${\rm H}_{n}(q)$-module $\Lambda^{aug}$ (followed by `braiding tails'). Note that the performance of {\it bbm}'s is now considered to take place on any moving strand. We then show that the equations obtained from elements in the ${\rm H}_{n}(q)$-module $\Lambda^{aug}$ by performing {\it bbm}'s on any strand are equivalent to equations obtained from elements in the ${\rm H}_{n}(q)$-module $\Lambda$ by performing {\it bbm}'s on any strand (ordering the exponenets in the $t_i$'s). We finally eliminate the `braiding tails' from elements in the ${\rm H}_{n}(q)$-module $\Lambda$ and reduce the computations to the set $\Lambda$, where the {\it bbm}'s are performed on any moving strand (see \cite{DLP}). Thus, in order to compute $\mathcal{S}(L(p,1))$, it suffices to solve the infinite system of equations obtained by performing {\it bbm}'s on any moving strand of elements in the set $\Lambda$.

In this section we present the above steps in more details. The procedure is similar to the one described in \cite{DL2} (see \S~2 in this paper), but now we do it simultaneously before and after the performance of a braid band move.

\subsection{Reducing computations from $B_{1, n}$ to $\Sigma_n^{\prime}$}\label{lp1}

We now show that it suffices to perform {\it bbm}'s on elements in the linear basis of ${\rm H}_{1,n}(q)$, $\Sigma^{\prime}$. As already mentioned, this is the first step in order to restrict the performance of {\it bbm}'s only on elements in the basis $\Lambda$. We first recall that by the Artin combing we can write words in $B_{1, n}$ in the form $\tau^{\prime}\cdot w$, where $\tau^{\prime}$ is a monomial in the $t_i^{\prime}$'s and $w\in B_n$ (Remark~1). We then note the following:

\begin{lemma}\label{ArtComb}
Braid band moves are interchangeable with the Artin combing.
\end{lemma}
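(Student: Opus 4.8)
The plan is to show that the two operations---performing a braid band move (\textit{bbm}) and applying the Artin combing---can be carried out in either order with equivalent results, up to the braid equivalence of Theorem~\ref{markov}. Recall that by Remark~\ref{tprt}, any $\alpha \in B_{1,n}$ combs as $\alpha = \tau^{\prime}\cdot w$, where $\tau^{\prime}$ is a word in the looping elements $t_i^{\prime}$ and $w\in B_n$ is the braiding tail. A \textit{bbm} on the first moving strand (Figure~\ref{bbm12}) sends a braid $\beta$ to $t^p\,\beta_+\cdot\sigma_1^{\pm 1}$, where $\beta_+$ is $\beta$ with all indices shifted up by one. So the content of the lemma is: combing first and then performing the \textit{bbm}, versus performing the \textit{bbm} on the uncombed word and then combing, yield the same element of $\bigcup_n B_{1,n}$ modulo moves (i)--(iv).

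First I would record precisely how the index-shift map $\beta \mapsto \beta_+$ interacts with the combing decomposition. The shift $\sigma_j \mapsto \sigma_{j+1}$, $t \mapsto t$ (equivalently realized on looping generators by $t_i^{\prime}\mapsto t_{i+1}^{\prime}$, since $t_i^{\prime}=\sigma_i\cdots\sigma_1 t\sigma_1^{-1}\cdots\sigma_i^{-1}$ shifts to $\sigma_{i+1}\cdots\sigma_2 t\sigma_2^{-1}\cdots\sigma_{i+1}^{-1}$) is a group homomorphism $B_{1,n}\hookrightarrow B_{1,n+1}$ onto the subgroup fixing the first moving strand. Because the combing map is canonical---it is the normal-form splitting coming from the semidirect product structure $B_{1,n}\cong F_n \rtimes B_n$ underlying the presentation (\ref{presB})---the shift homomorphism must carry combed words to combed words: if $\alpha = \tau^{\prime}\cdot w$ then $\alpha_+ = \tau^{\prime}_+\cdot w_+$, with $\tau^{\prime}_+$ a word in the $t_i^{\prime}$'s (indices raised by one) and $w_+\in B_n$ the shifted tail. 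Thus $t^p\,\alpha_+\cdot\sigma_1^{\pm 1} = t^p\,\tau^{\prime}_+\,w_+\cdot\sigma_1^{\pm1}$.

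Next I would combine this with the prepended factor $t^p\sigma_1^{\pm1}$. Performing the \textit{bbm} on the \emph{combed} word gives $t^p\,\tau^{\prime}_+\,w_+\,\sigma_1^{\pm 1}$; this is already almost in combed form, the only defect being that the $\sigma_1^{\pm1}$ sits to the right of the looping factor $t^p\tau^{\prime}_+$ separated by the tail $w_+\in B_n$, which is fine since the tail is supposed to be the rightmost braiding block. Since $\sigma_1^{\pm1}\in B_{n+1}$, the product $w_+\sigma_1^{\pm1}$ is again a braiding tail in $B_{n+1}$, and $t^p\tau^{\prime}_+$ is a word in looping generators; so the result is genuinely combed. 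Performing the \textit{bbm} \emph{before} combing produces $t^p\,\alpha_+\,\sigma_1^{\pm1}$ with $\alpha_+$ uncombed, and then combing $\alpha_+=\tau^{\prime}_+ w_+$ inside yields the identical word. Hence the two routes land on literally equal elements, and a fortiori equivalent ones; the commutation relations $t\sigma_i=\sigma_i t$ for $i>1$ (used to move the leading $t^p$ past any part of the tail not involving $\sigma_1$) are the only relations invoked, and they are part of the presentation (\ref{presB}).

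The step I expect to be the genuine obstacle is verifying that the abstract shift homomorphism really does respect the combing normal form---i.e.\ that $(\tau^{\prime}\cdot w)_+ = \tau^{\prime}_+\cdot w_+$ with $\tau^{\prime}_+$ still purely a looping word and $w_+$ still purely in $B_n$. A priori the index shift is only defined on generators and extended multiplicatively, so one must check it does not mix the two factors; this reduces to confirming that the shift carries the free normal subgroup $F_n = \langle t_0^{\prime},\ldots,t_{n-1}^{\prime}\rangle$ into $F_{n+1}$ and the complement $B_n$ into $B_{n+1}$, which follows from the explicit generator images above but should be spelled out. A secondary subtlety is that a \textit{bbm} may be performed on \emph{any} moving strand, not only the first; here I would invoke the reduction already established after Theorem~\ref{markov} (see the diagram with $bbm_{\pm n}(\beta)=bbm_{\pm1}(\alpha)$ and \cite[Lemma~1]{DLP}), so that it suffices to treat the first-strand case, which the argument above handles.
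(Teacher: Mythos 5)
Your strategy (make the commuting square underlying Lemma~\ref{ArtComb} explicit by analyzing the index-shift map) is the same as the paper's, but the structural claim your argument rests on is false, and false in a way that matters for this paper. You take the shift of a \emph{bbm} to be $\sigma_j\mapsto\sigma_{j+1}$, $t\mapsto t$, and assert it acts on looping generators by $t_i^{\prime}\mapsto t_{i+1}^{\prime}$, so that a combed word shifts to a combed word and the two routes give ``literally equal'' elements. Neither part holds. First, the shift used throughout the paper sends $t_i\mapsto t_{i+1}$, in particular $t=t_0\mapsto t_1=\sigma_1t\sigma_1\neq t$: see Lemma~\ref{gap}, where the \emph{bbm} on the first strand takes $t^{k}$ to $t^pt_1^{k}\sigma_1^{\pm1}$ (not to $t^{p+k}\sigma_1^{\pm1}$) and $t_1^{k}$ to $t^pt_2^{k}\sigma_1^{\pm1}$, and the example illustrating Proposition~\ref{tailbbm}, where $tt_1t_2\cdot g_1g_2g_1$ goes to $t^pt_1t_2t_3\cdot g_2g_3g_2g_1^{\pm1}$. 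Geometrically, after the band slide the old loops around the surgery strand encircle both the surgery strand and its new parallel copy, which is exactly $t_1=t_1^{\prime}\sigma_1^2$. Second, even under your map the identification is wrong: $\sigma_{i+1}\cdots\sigma_2t\sigma_2^{-1}\cdots\sigma_{i+1}^{-1}$ is not $t_{i+1}^{\prime}=\sigma_{i+1}\cdots\sigma_2\sigma_1t\sigma_1^{-1}\sigma_2^{-1}\cdots\sigma_{i+1}^{-1}$; for $i=0$ your claim reads $t=t_1^{\prime}$, i.e.\ $t\sigma_1=\sigma_1t$, which fails in $B_{1,n+1}$. Under the correct shift $\phi$ one computes $\phi(t_i^{\prime})=t_{i+1}^{\prime}\cdot\bigl(\sigma_{i+1}\cdots\sigma_2\sigma_1^{2}\sigma_2^{-1}\cdots\sigma_{i+1}^{-1}\bigr)$, whose braiding part is nontrivial. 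So $\phi$ does \emph{not} carry the free subgroup $\langle t_0^{\prime},\ldots,t_{n-1}^{\prime}\rangle$ into $\langle t_0^{\prime},\ldots,t_{n}^{\prime}\rangle$, the shift of a combed word is not combed, and re-combing the \emph{bbm} result genuinely changes both the looping part and the tail. This incompatibility of the $t_i^{\prime}$'s with \emph{bbm}'s is precisely the content of Remark~\ref{tprt} (it is the $t_i$'s that describe \emph{bbm}'s naturally) and is the reason the basis $\Lambda$ is needed at all; an argument in which this tension disappears cannot be correct.

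The lemma itself is still true, but by a simpler mechanism than the one you propose. The correct shift $\phi:B_{1,n}\to B_{1,n+1}$ is a group homomorphism (it is induced by the geometric insertion of the new strand parallel to the fixed one; algebraically, the needed relation $\sigma_2t_1\sigma_2t_1=t_1\sigma_2t_1\sigma_2$ is the commutation $t_1t_2=t_2t_1$). Hence the assignment $\alpha\mapsto t^p\phi(\alpha)\sigma_1^{\pm1}$ depends only on the group element $\alpha$ and not on the word chosen to represent it, while Artin combing rewrites a word into another word representing the same group element. Therefore combing then performing the \emph{bbm}, or performing the \emph{bbm} then combing, yield the same element of $B_{1,n+1}$ --- equality of group elements, not literal equality of generator-wise substituted normal forms --- and this is exactly what the commutative diagram in the paper's proof asserts. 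Your closing paragraph correctly identified the dangerous step (whether the shift respects the combing normal form); the resolution is that it does not, and does not need to. Your reduction of arbitrary-strand \emph{bbm}'s to the first strand via \cite[Lemma~1]{DLP} is fine.
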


\begin{proof}
Let $d\in B_{1, n}$. Then, the proof is clear from the following diagram:
$$
\begin{CD}
B_{1, n} \ni & d @>(\pm)(p,1) bbm>> t^p d^{\prime}\cdot \sigma_1^{\pm1}\\
&@VVArtin\ combingV @VVArtin\ combingV\\
& \tau_1^{\prime}\cdot w @>(\pm)(p,1) bbm>> t^p \tau_2^{\prime}\cdot \sigma_1^{\pm1}
\end{CD}
$$

\end{proof}

\begin{lemma}{\cite[Lemma~2]{DLP}}\label{lbbm'skein}
Braid band moves and the quadratic relation (skein relation) are interchangeable. 
\end{lemma}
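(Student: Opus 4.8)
The plan is to prove the statement by a locality argument, organized around a commutative diagram in the spirit of Lemma~\ref{ArtComb}. Recall that a braid band move sends a word $\alpha\in B_{1,n}$ to $t^p\alpha_+\sigma_1^{\pm1}$, where $\alpha_+$ is obtained from $\alpha$ by shifting every braiding index up by one, while the quadratic (skein) relation~(\ref{quad}) resolves a single square via $\sigma_i^2=(q-1)\sigma_i+q$. The essential observation is that the band move is supported in a thin neighborhood of the surgery strand near the first moving strand, whereas the crossing at which the skein relation is applied lives in a disjoint region of the mixed braid; since the two modifications occur in disjoint disks of the diagram, the order in which they are performed is immaterial. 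Once this is verified on a single square, linearity of both operations extends the conclusion to arbitrary words.

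Concretely, writing $\alpha=\beta\sigma_i^2\gamma$ and setting $\alpha'=\beta\sigma_i\gamma$, $\alpha''=\beta\gamma$, I would establish commutativity of the square
$$
\begin{CD}
\alpha @>bbm>> t^p\alpha_+\sigma_1^{\pm1}\\
@VV\text{skein}V @VV\text{skein}V\\
(q-1)\alpha'+q\alpha'' @>bbm>> (q-1)t^p\alpha'_+\sigma_1^{\pm1}+q\,t^p\alpha''_+\sigma_1^{\pm1}
\end{CD}
$$
Descending first applies the quadratic relation and then, by linearity, the band move to each summand, giving $(q-1)t^p\alpha'_+\sigma_1^{\pm1}+q\,t^p\alpha''_+\sigma_1^{\pm1}$. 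Going right first produces $t^p\beta_+\sigma_{i+1}^2\gamma_+\sigma_1^{\pm1}$, to which resolving $\sigma_{i+1}^2$ yields $(q-1)t^p\beta_+\sigma_{i+1}\gamma_+\sigma_1^{\pm1}+q\,t^p\beta_+\gamma_+\sigma_1^{\pm1}$. Since the index shift is a homomorphism carrying $\sigma_i^2$ precisely to $\sigma_{i+1}^2$, we have $\alpha'_+=\beta_+\sigma_{i+1}\gamma_+$ and $\alpha''_+=\beta_+\gamma_+$, so the two routes agree; note in particular that the crossing being resolved sits at position $i+1\geq2$, while the newly created crossing $\sigma_1^{\pm1}$ and the prefix $t^p$ are never the ones resolved in this commutation, so they play no role here.

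The point requiring genuine care, and what I expect to be the main obstacle, is the geometric justification underlying the shift $\sigma_i\mapsto\sigma_{i+1}$: one must confirm that the twisted band, as it acquires its single twist and then slides along the surgery curve, can be isotoped past the crossing without altering the closure. This is where the algebra must be matched to the topology of Theorem~\ref{markov}(iv). The argument is that the band move is carried out in a disk that stays within a thin neighborhood of the surgery strand throughout the slide, hence disjoint from the fixed disk supporting the crossing of the moving part; the index shift records exactly this separation of supports. Verifying that no incidental crossings are created between the sliding band and the strand carrying $\sigma_i$—equivalently, that the case where the resolved crossing sits on or adjacent to the first moving strand still preserves disjointness of the two disks—is the one place where a careful diagrammatic check is needed before linearity completes the proof.
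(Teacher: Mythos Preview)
Your proof is correct and follows essentially the same idea as the paper's: the braid band move acts on a word by the index shift $\sigma_i\mapsto\sigma_{i+1}$ (together with the prefix $t^p$ and suffix $\sigma_1^{\pm1}$), and this shift preserves the form of the quadratic relation, so resolving before or after the move gives the same result. The paper organizes the argument slightly differently---it first invokes the Artin combing (Lemma~\ref{ArtComb}) to write the word as $\tau_1'\cdot w$ with $w\in B_n$, expands $w$ completely into canonical form $w=\sum_i f_i(q)w_i$ in ${\rm H}_n(q)$, and then checks that $w_+=\sum_i f_i(q)w_{i_+}$---whereas you work with a single application of the quadratic relation at an arbitrary position and appeal to linearity. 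Your version is marginally more direct since it bypasses the combing step; the paper's version has the advantage of making explicit that the quadratic relation is applied only to the braiding part $w$, so there is no interaction with the looping generators. Your final paragraph on the geometric disjoint-support justification is more careful than the paper, which simply asserts the algebraic commutation and refers to a figure; the algebraic check you give in the second paragraph already suffices.
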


\begin{proof}
By Lemma~\ref{ArtComb}, a word in $B_{1, n}$ can be assumed in the form $\tau_1^{\prime}\cdot w$, where $\tau_1^{\prime}$ is a monomial in $t_i^{\prime}$'s and $w\in B_{n}$. Seen as a monomial in ${\rm H}_n(q)$ and applying the quadratic relation, the element $w$ can be written as a sum: $w=\sum_{i=1}^{n}{f_{i}(q)w_{i}}$, where the $w_i$'s are words in ${\rm H}_n(q)$ in canonical form and the $f_{i}(q)$ are expressions in $\mathbb{C}$ for all $i$. We perform a braid band move on the element $\tau_1^{\prime}\cdot w\in B_{1, n}$ and we obtain:

\[
\tau_1^{\prime}\cdot w \ \xrightarrow[bbm]{(\pm)(p,1)} \ t^p\tau_2^{\prime}\cdot w_+\sigma_1^{\pm1},
\]

\noindent where $w_+\in B_{n+1}$ is the same word as $w$ but with all indices shifted by one. Hence, on the algebra level we have $w_i\in {\rm H}_{n}(q)$ and $w_+=\sum_{i=1}^{n}{f_{i}(q)w_{i_+}}\in {\rm H}_{n+1}(q)$. So:

\[
\tau_1^{\prime}\cdot w \ = \ \tau_1^{\prime}\cdot \sum_{i=1}^{n}{f_{i}(q)w_{i}}\ \xrightarrow[bbm]{(\pm)(p,1)} \ t^p\tau_2^{\prime}\cdot \sum_{i=1}^{n}{f_{i}(q)w_{i_+}} g_1^{\pm1}.
\]

\noindent On the other hand, for each mixed braid $\tau_1^{\prime}\cdot w_i$ we also have:

\[
\tau_1^{\prime}\cdot w_i \ \xrightarrow[bbm]{(\pm)(p,1)}\ t^p\tau_2^{\prime}\cdot w_{i_+} \sigma_1^{\pm1}\ \forall\ i,
\]

\noindent and thus, on the algebra level we finally obtain:
\[
\tau_2^{\prime}\cdot \sum_{i=1}^{n}{f_{i}(q)w_{i}} g_1^{\pm1}\ =\ t^p\tau_2^{\prime}\cdot w_{+} g_1^{\pm1}.
\]

\noindent That is, the following diagram commutes:
$$
\begin{CD}
\tau_1^{\prime} \cdot w @>(\pm)(p,1) bbm>> t^p\tau_2^{\prime}\cdot w_+ \sigma_1^{\pm1}\\
@VVquadraticV @VVquadraticV\\
\sum_i f_i(q)\tau_1^{\prime}\cdot w_i @>(\pm)(p,1) bbm>> \sum_i f_i(q) t^p\tau_2^{\prime}\cdot w_{i_+} g_1^{\pm1}
\end{CD}
$$

\noindent See also Figure~\ref{bbm'skein}. So the proof is concluded.

\end{proof}

\begin{figure}
\begin{center}
\includegraphics[width=4.5in]{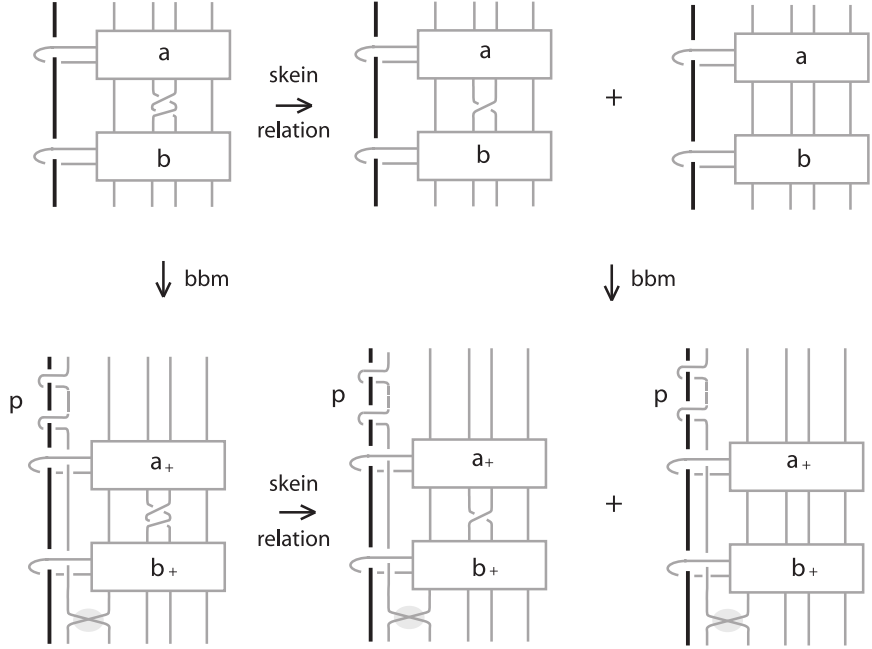}
\end{center}
\caption{Proof of Lemma~\ref{lbbm'skein}.}
\label{bbm'skein}
\end{figure}

Furthermore we have:

\begin{lemma}\label{sprbbm}
The procedure of bringing a looping monomial in the $t_i^{\prime}$'s in the form of elements in the sets $\Sigma_n^{\prime}$ of Theorem~2 is consistent with the braid band moves.
\end{lemma}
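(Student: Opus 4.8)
The plan is to prove the statement exactly as Lemmas~\ref{ArtComb} and~\ref{lbbm'skein} were proved, namely by exhibiting a commuting square. Write $P$ for the rewriting procedure that takes a looping monomial in the $t_i^{\prime}$'s (as produced by the Artin combing, Remark~\ref{tprt}) to its canonical $\Sigma_n^{\prime}$-form of Theorem~\ref{basesH}, i.e.\ to ordered, non-repeating indices followed by a braiding tail. I want to show that performing $P$ and then a {\it bbm} agrees, up to the moves of Theorem~\ref{markov}, with performing the {\it bbm} first and then $P$. Since $P$ is by construction a \emph{finite} composition of elementary steps---each being either an application of a defining relation of~(\ref{presB}) that transposes or merges two adjacent looping generators, or a conjugation allowed by Theorem~\ref{markov}---and since a vertical stack of commuting squares is again a commuting square, it suffices to check that a single elementary step commutes with a {\it bbm}.

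First I would isolate the two structural effects of a {\it bbm}, $\alpha\mapsto t^{p}\alpha_{+}\sigma_{1}^{\pm1}$: it applies the index shift $t_i^{\prime}\mapsto t_{i+1}^{\prime}$, $\sigma_j\mapsto\sigma_{j+1}$ (the operation $\alpha\mapsto\alpha_{+}$), and it inserts the factor $t^{p}=(t_0^{\prime})^{p}$ on the far left and $\sigma_{1}^{\pm1}$ on the far right. The backbone of the argument is the \emph{shift-equivariance} of the relations of~(\ref{presB}): every relation among $t_i^{\prime},\sigma_j$ is carried by the shift to the same relation among $t_{i+1}^{\prime},\sigma_{j+1}$, which is again a defining relation, now in $B_{1,n+1}$. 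Hence each elementary step of $P$ performed on $\alpha$, after the shift, is a legitimate step of the same type performed on $\alpha_{+}$; this already yields $\bigl(P(\alpha)\bigr)_{+}=P^{+}(\alpha_{+})$, where $P^{+}$ denotes the index-shifted procedure.

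The remaining, and genuinely delicate, point is that the two inserted factors must not disrupt the reordering. Here the shift works in our favour, \emph{provided} the conjugations realizing $P$ can be taken to involve only generators of index $\ge1$ (this is the crux, addressed below): then the smallest looping index of $\alpha_{+}$ is $1$ and the conjugating generators realizing $P^{+}$ have indices $\ge2$. Because $t=t_0^{\prime}$ commutes with every $\sigma_j$ for $j>1$ (the relation $t\sigma_i=\sigma_i t$, $i>1$, in~(\ref{presB})), the prepended factor $t^{p}$ is fixed by all of these conjugations, and, carrying the least index $0$, it lands precisely at the front of the canonical $\Sigma_{n+1}^{\prime}$-form---so that $P^{+}$ applied to $t^{p}\alpha_{+}$ returns $t^{p}$ times the reordered shifted looping part. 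The trailing $\sigma_{1}^{\pm1}$ is a braiding generator absorbed into the braiding tail, where its interaction with the reordering is mediated only by the quadratic (skein) relation and is therefore already controlled by Lemma~\ref{lbbm'skein}.

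I expect the main obstacle to be this very control of the inserted $t^{p}$: one must rule out that $P^{+}$ ever conjugates by $\sigma_{1}$ or by $t_1^{\prime}$, for either would entangle $t^{p}$ with the index-$1$ looping generator through $\sigma_1 t\sigma_1^{-1}=t_1^{\prime}$ and the mixed relation $\sigma_1 t\sigma_1 t=t\sigma_1 t\sigma_1$. Equivalently, one must verify that the original procedure $P$ never conjugates by the bottom generators $\sigma_0$ (which does not exist) or $t_0^{\prime}=t$, i.e.\ that the least-index looping generator is brought to the front without ever being dragged past a higher-index one. Establishing this normal-ordering property of $P$---that the smallest index is always already correctly placed relative to the bottom strand---is where the careful bookkeeping of \cite{DL2,DLP} must be invoked, and it is the step I would write out in full.
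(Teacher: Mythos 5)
Your plan correctly isolates the crux of the lemma---that the prepended $t^{p}$ and the appended $\sigma_{1}^{\pm1}$ must not become entangled with the reordering procedure---but it never resolves it: the final sentence of your proposal defers exactly this point to ``careful bookkeeping'' that you do not carry out, and that deferred point \emph{is} the content of Lemma~\ref{sprbbm}. Worse, the normal-ordering property you propose to verify is doubtful in the form you state it. The procedure of \cite[Proposition~2 \& Theorem~1]{La4} that brings a looping monomial to $\Sigma_n^{\prime}$-form is not merely a finite composition of local, shift-equivariant rewritings inside the word: it also uses conjugation of the \emph{entire} mixed braid (moves (i) and (iii) of Theorem~\ref{markov}), which acts globally and therefore drags the prepended $t^{p}$ and the trailing $\sigma_1^{\pm1}$ along with everything else. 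So ``ruling out that $P^{+}$ ever conjugates by $\sigma_1$ or $t_1^{\prime}$'' is not a bookkeeping exercise one can expect to win; conjugations touching the bottom strands cannot in general be avoided, and your stack-of-commuting-squares argument breaks at precisely those steps.

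The paper closes this gap with a single geometric idea that your proposal is missing: after the braid band move, \emph{cable} the new first moving strand together with the fixed strand and regard the cable as one thickened fixed strand. Under this identification the shifted word $\tau_2^{\prime}\cdot w_{+}$ in $B_{1,n+1}$ is formally the same word as $\tau_1^{\prime}\cdot w$ in $B_{1,n}$, so the index-ordering procedure is \emph{literally the same sequence of moves} before and after the {\it bbm}; the factors $t^{p}$ and $\sigma_1^{\pm1}$ travel with the cable and are never separated from it, no matter which generators the procedure conjugates by. This makes the commuting square tautological rather than something to be verified step by step, and it is what eliminates the index-control property on which your whole reduction hinges. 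If you want to salvage an algebraic version of your argument, you would need to build this cabling (or an algebraic substitute for it, e.g.\ working in the group of braids fixing the cable setwise) into the definition of your elementary steps from the start.
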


\begin{proof}
Let $\tau_1^{\prime}\cdot w$ be an element in $B_{1, n}$ and $t^p\tau_2^{\prime}\cdot w_+\cdot \sigma_1^{\pm 1}$ the result of a performance of a {\it bbm} on $\tau_1^{\prime}\cdot w$. We now follow \cite[Proposition~2 \& Theorem~1]{La4} so as to order the indices of the monomials $\tau_1^{\prime}$ and $\tau_2^{\prime}$ in the $t_i^{\prime}$'s (before and after the performance of the {\it bbm}). Cabling the first moving strand coming from the performance of the {\it bbm} with the fixed strand of the mixed braid, and viewing this cable as one thickened strand, we have that the procedure we follow to order the indices of the $t_i^{\prime}$'s is identical before and after the performance of the {\it bbm} and this concludes the proof.
\end{proof}

Using Lemmas~\ref{ArtComb}, \ref{lbbm'skein} and \ref{sprbbm} we now have the following:

\begin{prop}{\cite[Proposition~1]{DLP}}\label{bbm's'}
It suffices to consider the performance of braid band moves on the first strand of only elements in the set $\Sigma^{\prime}$.
\end{prop}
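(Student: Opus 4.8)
The goal of Proposition~\ref{bbm's'} is to show that the infinite system of equations~(\ref{eqbbm}), coming from performing all possible braid band moves on arbitrary elements of $\bigcup_n B_{1,n}$, can be reduced to the much smaller system obtained by performing \emph{bbm}'s only on the first moving strand of elements in the canonical basis $\Sigma^{\prime}$ of $\bigcup_n {\rm H}_{1,n}(q)$. The plan is to assemble this reduction in two conceptually distinct stages, using the three preceding lemmas as the engine: first pass from a general braid word to its combed, basis form, and second localize the move to a single designated strand.

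\textbf{Stage 1: reduction to basis words.} First I would take an arbitrary $d\in B_{1,n}$ and invoke Lemma~\ref{ArtComb} to write $d$ in combed form $\tau_1^{\prime}\cdot w$, where $\tau_1^{\prime}$ is a monomial in the $t_i^{\prime}$'s and $w\in B_n$; the lemma guarantees that a \emph{bbm} performed on $d$ produces the same closure-equivalence class as the \emph{bbm} performed on $\tau_1^{\prime}\cdot w$, so no information is lost. Next I would apply Lemma~\ref{sprbbm} to bring the looping monomial $\tau_1^{\prime}$ into the ordered form required by the sets $\Sigma_n^{\prime}$ of Theorem~\ref{basesH}; the content of that lemma is precisely that this ordering procedure commutes with the \emph{bbm}, since cabling the first moving strand with the fixed strand makes the index-ordering argument of \cite{La4} run identically before and after the move. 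Finally, the braiding tail $w\in B_n$, read in ${\rm H}_n(q)$, expands by the quadratic relation as $w=\sum_i f_i(q)\,w_i$ into canonical basis elements $w_i$; Lemma~\ref{lbbm'skein} asserts that the \emph{bbm} is interchangeable with this skein (quadratic) expansion, so the equation $X_{\widehat{d}}=X_{\widehat{bbm(d)}}$ is, by linearity, a consequence of the corresponding equations for each basis word $\tau_1^{\prime}\cdot w_i\in \Sigma_n^{\prime}$. Chaining these three commuting reductions shows that it suffices to impose the \emph{bbm} equations on elements of $\Sigma^{\prime}$.

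\textbf{Stage 2: localization to the first strand.} It then remains to argue that, for an element of $\Sigma^{\prime}$, performing the \emph{bbm} on the first moving strand captures the full system, rather than needing the move on every strand. Here I would invoke the observation already recorded after Theorem~\ref{markov}: for any $\beta\in B_{1,n}$ a \emph{bbm} on the $i$-th strand equals, up to the conjugation $\alpha=(\sigma_1^{-1}\ldots\sigma_{i-1}^{-1})\cdot\beta\cdot(\sigma_{i-1}\ldots\sigma_1)$ which is merely isotopy in ST, a \emph{bbm} on the first strand of $\alpha$, via the identity $bbm_{\pm i}(\beta)=bbm_{\pm 1}(\alpha)$ of \cite[Lemma~1]{DLP}. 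Since conjugation is move~(i) of Theorem~\ref{markov} and hence already built into the invariant $X$, the equations coming from the $i$-th strand are redundant once we have those from the first strand across all of $\Sigma^{\prime}$. Combining Stage 1 and Stage 2 yields the proposition.

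\textbf{Main obstacle.} The delicate point is not the bookkeeping but verifying that the three reductions genuinely \emph{commute} with the \emph{bbm} as claimed, i.e. that the diagrams in Lemmas~\ref{ArtComb} and~\ref{lbbm'skein} close and that the ordering procedure of Lemma~\ref{sprbbm} is unaffected by the move. The subtlety is that a \emph{bbm} inserts a factor $t^p$ and shifts all indices upward by one (turning $\alpha$ into $\alpha_+$ in the notation of Theorem~\ref{markov}(iv)), so one must check that combing, index-ordering, and quadratic expansion behave compatibly with this index shift and with the extra $t^p\cdot\sigma_1^{\pm1}$; this is exactly what the cabling viewpoint in Lemma~\ref{sprbbm} and the explicit commuting square in Lemma~\ref{lbbm'skein} are designed to secure. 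Since those lemmas are available, the proposition follows by concatenating their commuting diagrams, and the only care required is to track the index shift consistently through each stage.
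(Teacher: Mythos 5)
Your proposal is correct and takes essentially the same approach as the paper: its proof likewise chains Lemma~\ref{ArtComb} (Artin combing), Lemma~\ref{lbbm'skein} (quadratic expansion plus linearity of $X$) and Lemma~\ref{sprbbm} (index ordering via cabling) to reduce to first-strand \emph{bbm}'s on $\Sigma^{\prime}$, with the reduction of an $i$-th strand move to a first-strand move by conjugation (\cite[Lemma~1]{DLP}) being exactly the observation recorded after Theorem~\ref{markov}. The only cosmetic differences are that you apply the index-ordering lemma before the quadratic expansion (the paper does the reverse) and that you spell out the first-strand localization as a separate stage, which the paper leaves to the earlier discussion.
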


\begin{proof}
By the Artin combing, any $d\in B_{1,n}$ can be written in the form $\tau^{\prime}\cdot w$, where $\tau^{\prime}$ is a monomial in $t_i^{\prime}$'s and $w \in B_n$.
By Lemma~\ref{ArtComb} we have that:
\[
\begin{array}{rcll}
X_{\widehat{\tau^{\prime}\cdot w}} & = & X_{\widehat{t^p \tau^{\prime \prime}\cdot \sigma_1^{\pm 1} \cdot w_{+}}} &\overset{(Lemma~\ref{lbbm'skein})}{\Rightarrow} \\
&&&\\
\sum_i{A_i \cdot X_{\widehat{\tau^{\prime}\cdot w_i}}} & = & \sum_i{A_i \cdot X_{\widehat{t^p \tau^{\prime \prime}\cdot \sigma_1^{\pm 1} \cdot w_{i_{+}}}}}, & \\
\end{array}
\]

\noindent where $w_i$ are words in reduced form in ${\rm H}_{n}(q),\ \forall i$ and $A_i \in \mathbb{C}$. Then, we order the indices of the monomials in the $t_i^{\prime}$'s before and after the performance of the {\it bbm}. The result follows from Lemma~\ref{sprbbm}.
\end{proof}

\subsection{From the set $\Sigma^{\prime}$ to the set $\Sigma$}

We shall now show that it suffices to perform {\it bbm}'s on elements in the linear bases $\Sigma$ of the algebras ${\rm H}_{1,n}(q)$, which includes as a proper subset the basis $\Lambda$ of $\mathcal{S}({\rm ST})$, described in \S~\ref{lamb}. Indeed, let $\tau^{\prime}\cdot w \in \Sigma^{\prime}$ as above. Then, by (8) we have:
\[
\begin{matrix}
\tau^{\prime}\cdot w & = & (t^{k_0} {t^{\prime}_1}^{k_1 } \ldots {t^{\prime}_m}^{k_m})\cdot w & {=} &\underset{\tau}{\underbrace{ t^{k_0} (t_1g_1^{-2})^{k_1} \ldots ({t_mg_m^{-1}\ldots g_2^{-1} g_1^{-2} g_2^{-1} \ldots g_m^{-1}})^{k_m}}}\cdot w\ =\\
  & = & \tau \cdot w,  &  &    \\
\end{matrix}
\]

\noindent see top row of Fig.~18.

\smallbreak

We then perform a {\it bbm} on the first moving strand of both $\tau^{\prime}\cdot w$ and
$\tau \cdot w$ (see bottom row of Fig.~18) and we cable the new parallel strand together with the surgery strand. Denote the result of cabling the new strand appearing after the performance of the {\it bbm} with the fixed strand as $cbl(ps)$. Then:
\[
\begin{matrix}
\tau^{\prime}\cdot w & \overset{bbm}{\rightarrow} & cbl(ps) \cdot \tau^{\prime}\cdot w \cdot g_1^{\pm 1}\\
\parallel & & \parallel \\
\tau\cdot w & \overset{bbm}{\rightarrow} & cbl(ps) \cdot \tau\cdot w \cdot g_1^{\pm 1}\\
\end{matrix}
\]
So: $X_{\widehat{\tau^{\prime}\cdot w}}\ =\ X_{\widehat{bbm(\tau^{\prime}\cdot w)}}\ \Leftrightarrow \ X_{\widehat{\tau\cdot w}}\ =\ X_{\widehat{bbm(\tau\cdot w)}}$.
But since $\tau\cdot w \in {\rm H}_{1,n}(q)$ for some $n\in \mathbb{N}$, we can express $\tau\cdot w$ as a sum of elements in the linear basis $\Sigma_n$ of ${\rm H}_{1,n}(q)$, that is, $\tau\cdot w = \sum_{i}{a_i T_i \cdot w_i}$, where $a_i \in \mathbb{C}$ and $T_i\cdot w_i \in \Sigma_n$, for all $i$ and $T_i$ is a monomial in the $t_j$'s with possible gaps in the indices and
unordered exponents. Then, by Theorem~5:
\[
\begin{array}{lllll}
X_{\widehat{\tau\cdot w}}\ = \ X_{\widehat{bbm(\tau\cdot w)}} & \Leftrightarrow & tr(\tau\cdot w) & = & \Delta \cdot tr\left(cbl(ps) \tau \cdot w \cdot g_1^{\pm1} \right)\\
& \Leftrightarrow & \sum_{i}{a_i\, tr(T_i\cdot w_i)} & = & \Delta \cdot \sum_{i}{a_i\, tr\left(\cdot cbl(ps)T_i\cdot w_i\cdot g_1^{\pm1} \right)}. \\
\end{array}
\]

We conclude that:

\[
\begin{matrix}
\tau^{\prime}\cdot w & \overset{bbm}{\rightarrow} & cbl(ps) \cdot \tau^{\prime}\cdot w \cdot g_1^{\pm 1} & (\ast) \\
\parallel & & \parallel &\\
\tau\cdot w & \overset{bbm}{\rightarrow} & cbl(ps) \cdot \tau\cdot w \cdot g_1^{\pm 1}&\\
\parallel & & \parallel &\\
\sum_{i}{a_i\cdot T_i\cdot w_i} & \overset{bbm}{\rightarrow} & \sum_{i}{a_i\cdot t^p{T_i}_{+}\cdot {w_i}_{+}g_1^{\pm1}}& (\ast \ast)\\
\end{matrix}
\]

The main ponts in the above procedure is that when performing a {\it bbm}, the looping generators $t_i$ (in some ${\rm H}_{1, n}$) remain formally the same (in some ${\rm H}_{1, n+1}$). This implies that after the {\it bbm} and the cabling operation, the words $\tau^{\prime}\cdot w$ and $\tau\cdot w$ remain formally the same. Furthermore, when converting the words $\tau \cdot w$ and $bbm(\tau\cdot w)$ into linear sums of elements in the bases $\Sigma_n$ and $\Sigma_{n+1}$ respectively, the coefficients $a_i\in \mathbb{C}$ in $(\ast \ast)$ remain the same.

\smallbreak

The above are summarized in the following proposition:

\begin{prop}{\cite[Proposition~2]{DLP}}\label{picprop2}
The equations

\begin{equation}\label{eq1}
X_{\widehat{T^{\prime}\cdot w}} \ =\ X_{\widehat{bbm_1(T^{\prime}\cdot w)}}
\end{equation}

\noindent result from equations of the form 

\begin{equation}\label{eq2}
X_{\widehat{T\cdot w}} \ =\ X_{\widehat{t^p T_+ \cdot w_+} \cdot g_1^{\pm 1}},
\end{equation}

\noindent where $T^{\prime}\cdot w \in \Sigma^{\prime}$ and $T\cdot w \in \Sigma$.
\end{prop}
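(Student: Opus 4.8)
The plan is to follow the reduction already set up in the displayed diagrams marked $(\ast)$ and $(\ast\ast)$, organizing it into three stages: first replacing the primed looping monomial by an unprimed one at the level of braids, then checking that a braid band move and the subsequent cabling act identically on the two expressions, and finally expanding the unprimed element in the canonical basis $\Sigma$ and verifying that this expansion is compatible with the band move.

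First I would start with a word $T^{\prime}\cdot w\in\Sigma^{\prime}$ and, using the defining relations for the looping generators, namely $t_i^{\prime}=g_i\ldots g_1\,t\,g_1^{-1}\ldots g_i^{-1}$, rewrite the braid $T^{\prime}\cdot w$ as a braid $\tau\cdot w$ whose looping part is expressed through the $t_i$'s with braiding generators interspersed. The crucial point is that this is an identity of \emph{words} in $B_{1,n}$, not merely a skein relation, so $\widehat{T^{\prime}\cdot w}$ and $\widehat{\tau\cdot w}$ are the very same link and $X$ assigns them equal values; this is the content of the top row of the diagram $(\ast)$.

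Next, because $T^{\prime}\cdot w=\tau\cdot w$ as braid words, performing a braid band move on the first moving strand (legitimate by Proposition~\ref{bbm's'}) produces geometrically identical results for the two, and cabling the new parallel strand with the fixed strand gives $cbl(ps)\cdot T^{\prime}\cdot w\cdot g_1^{\pm1}=cbl(ps)\cdot\tau\cdot w\cdot g_1^{\pm1}$. Hence $X_{\widehat{T^{\prime}\cdot w}}=X_{\widehat{bbm_1(T^{\prime}\cdot w)}}$ holds if and only if $X_{\widehat{\tau\cdot w}}=X_{\widehat{bbm_1(\tau\cdot w)}}$. Since $\tau\cdot w\in{\rm H}_{1,n}(q)$, I would then expand it in the basis $\Sigma_n$, writing $\tau\cdot w=\sum_i a_i\,T_i\cdot w_i$ with $T_i\cdot w_i\in\Sigma_n$ and $a_i\in\mathbb{C}$. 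Appealing to the linearity of the Markov trace and the definition of $X$ (Theorem~\ref{inv}), the single equation for $\tau\cdot w$ unfolds into the system $X_{\widehat{T_i\cdot w_i}}=X_{\widehat{t^p {T_i}_+\cdot {w_i}_+\,g_1^{\pm1}}}$ over the basic words $T_i\cdot w_i\in\Sigma$, which is exactly the asserted form~(\ref{eq2}).

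The step I expect to be the main obstacle is establishing that the scalars $a_i$ are genuinely preserved once the band move is applied, i.e. that expanding $\tau\cdot w$ first and then performing the $bbm$ yields the same coefficients as performing the $bbm$ and then expanding in $\Sigma_{n+1}$. This rests on two facts already available: by Lemma~\ref{lbbm'skein} a braid band move commutes with the quadratic (skein) relation, and a $bbm$ acts on the looping part merely by prepending $t^p$ and shifting every index up by one, so this index shift commutes with the basis-conversion process that applies the braid and quadratic relations. Verifying this compatibility carefully — that the formal shape of the looping generators, and hence the coefficients $a_i$, are unchanged under the index shift inherent in a $bbm$ — is the delicate point, and it is precisely what makes the diagram $(\ast\ast)$ commute.
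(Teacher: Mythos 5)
Your proposal follows essentially the same route as the paper's own argument: rewriting $T^{\prime}\cdot w$ as the identical braid word $\tau\cdot w$ via $t_i^{\prime}=g_i\ldots g_1tg_1^{-1}\ldots g_i^{-1}$, observing that the \emph{bbm} and the cabling $cbl(ps)$ act identically on both expressions, expanding $\tau\cdot w$ in the basis $\Sigma_n$, and invoking linearity of ${\rm tr}$ together with the key fact that the coefficients $a_i$ are unchanged by the \emph{bbm} (the commutativity of $(\ast\ast)$). One caution on wording only: since the proposition asserts that equations~(\ref{eq1}) \emph{result from} equations~(\ref{eq2}), the linearity step should be phrased as the system of basic-word equations implying the single equation for $\tau\cdot w$ --- a single scalar equation does not ``unfold into'' a system --- which is precisely how the paper uses the diagram $(\ast\ast)$.
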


\begin{figure}
\begin{center}
\includegraphics[width=2.2in]{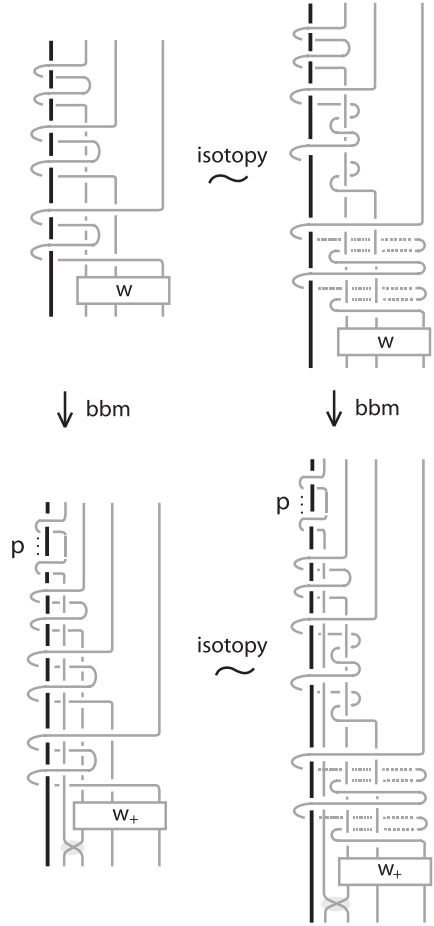}
\end{center}
\caption{Proof of Proposition~\ref{picprop2}. }
\label{prop2a}
\end{figure}

\subsection{From the set $\Sigma$ to the ${\rm H}_n(q)$-module $\Lambda^{aug}$: Managing the gaps}

As mentioned in \S~2, a word in $\Sigma$ is a monomial in the $t_i$'s followed by a `braiding tail', a monomial in the $g_i$'s. This `braiding tail' is a word in the algebra ${\rm H}_n(q)$ and the monomial in the $t_i$'s may have gaps in the indices. Using the ordering relation given in Definition~\ref{order} and conjugation these gaps are managed by showing that a monomial in the $t_i$'s can be expressed as a sum of monomials in the $t_i$'s with consecutive indices, which are of less order than the initial word and which are followed by `braiding tails' (Theorem~8). Note that the exponents of the $t_i$'s are in general not ordered, so these end monomials do not necessarily belong to the basis $\Lambda$ of $\mathcal{S}({\rm ST})$. In order to restrict the {\it bbm}'s only on elements in $\Lambda$, we need first to augment the set $\Lambda$. So, as a first step we consider the augmented set $\Lambda^{aug}$ in $\mathcal{S}({\rm ST})$ that contains monomials in the $t_i$'s with consecutive indices but arbitrary exponents. Note that, due to the presence of `braiding tails', the set $\Lambda^{aug}$ is considered as an ${\rm H}_n(q)$-module.

We now proceed with showing that Eqs~(\ref{eq2}) (Proposition~2) reduce to equations of the same type, but with elements in the set $\Lambda^{aug}$. For that, we need the following lemma about the monomial $t_1^k \in \Sigma\, \backslash\, \Lambda^{aug}$, which serves as the basis of the induction applied for proving the main result of this section, Proposition~\ref{imp}.

\begin{lemma}{\cite[Lemma~3]{DLP}}\label{gap}
The equations $X_{\widehat{t_1^{k}}}\ =\ X_{\widehat{t^pt_2^{k}\sigma_1^{\pm 1}}}$ are equivalent to the equations
\[
\begin{array}{llll}
X_{\widehat{{t}^{u_0}{t_1}^{u_1}}} & = & X_{\widehat{t^pt_1^{u_0}t_2^{u_1}\sigma_1^{\pm 1}}}, &\forall\ u_0,u_1<k\ :\ u_0 +u_1\ =\ k, \\
X_{\widehat{t^{k}}} & = & X_{\widehat{t^pt_1^{k}\sigma_1^{\pm 1}}}, & {bbm\ on\ 1st\ strand},\ t^k\in \Sigma_2,\\
X_{\widehat{t^{k}}} & = & X_{\widehat{t^pt_1^{k} \sigma_2\sigma_1^{\pm 1}}\sigma_2^{-1}} & {bbm\ on\ 2nd \ strand},\ t^k\in \Sigma_2.\\
\end{array}
\]
\end{lemma}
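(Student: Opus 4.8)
The plan is to start from the single braid band move equation attached to $t_1^k$ and to \emph{manage its gap} in the spirit of Theorem~\ref{gp}, carrying the manipulation simultaneously on both sides of the equation. By Proposition~\ref{bbm's'} the only braid band move we must impose on $t_1^k$ is the one on its first moving strand, and by Theorem~\ref{markov}(iv) this reads
\[
X_{\widehat{t_1^{k}}}\ =\ X_{\widehat{t^{p}t_2^{k}\sigma_1^{\pm 1}}}.
\]
The monomial $t_1^{k}$ lies in $\Sigma_2\setminus \Lambda^{aug}$: it has index $1$ but the looping generator $t_0=t$ is absent, so it carries a single gap at index $0$. The content of the lemma is to trade this one equation for the equations attached to the gapless monomials $t^{u_0}t_1^{u_1}$ together with the lower-index monomial $t^{k}$.

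First I would rewrite the left-hand side. Using $t_1=g_1tg_1$ and conjugating by $g_1$ (legitimate on the level of $X$ by conjugation invariance, Theorem~\ref{markov}(i)) gives $\widehat{t_1^{k}}\,\widehat{=}\,\widehat{(tg_1^{2})^{k}}$; expanding the $g_1^{2}$'s by the quadratic relation $g_1^{2}=(q-1)g_1+q$ and collecting, using that $t=t_0$ and $t_1$ commute, decomposes $t_1^{k}$ (up to conjugation) into a linear combination of the consecutive-index monomials $t^{u_0}t_1^{u_1}$ with $u_0+u_1=k$, $u_0,u_1\ge 1$, plus the index-$0$ monomial $t^{k}$, each possibly followed by a braiding tail. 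This is precisely the base case of the gap-management behind Theorems~\ref{convert} and~\ref{gp}, where the coefficient of the leading term is an invertible power of $q$; this invertibility is what will force the final transition to be two-sided rather than a mere implication.

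The key point is that the same manipulation may be performed verbatim on the right-hand side $t^{p}t_2^{k}\sigma_1^{\pm 1}$, which carries its own gap, now at index $1$. By Lemma~\ref{lbbm'skein} the braid band move is interchangeable with the quadratic relation, and the cabling argument of Proposition~\ref{picprop2}---where the new strand is cabled with the fixed strand---guarantees that the reduction is formally identical before and after the move, so the move also passes through every conjugation. Consequently each term of the left-hand reduction is paired with its braid band move on the right: the term $t^{u_0}t_1^{u_1}$ is paired with $bbm_1(t^{u_0}t_1^{u_1})=t^{p}t_1^{u_0}t_2^{u_1}\sigma_1^{\pm 1}$, yielding the first family of equations. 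The residual index-$0$ term $t^{k}$ must be read inside $\Sigma_2$, i.e.\ with two moving strands present, so a braid band move on it may be placed on either strand; the braiding tails $\sigma_2^{\pm 1}$ produced when managing the gap on the move side separate the two cases, giving $bbm_1(t^{k})=t^{p}t_1^{k}\sigma_1^{\pm 1}$ and, after conjugation by $\sigma_2$ and using $t\sigma_2=\sigma_2 t$, $bbm_2(t^{k})\,\widehat{=}\,t^{p}t_1^{k}\sigma_2\sigma_1^{\pm 1}\sigma_2^{-1}$. These are exactly the second and third equations.

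Finally I would record the equivalence: since the leading coefficient above is an invertible power of $q$ and the term $t^{k}$ is strictly lower in the ordering of Definition~\ref{order}, the transition between the single equation for $t_1^{k}$ and the collection of equations for $\{t^{u_0}t_1^{u_1}\}$, $t^{k}$ (first strand) and $t^{k}$ (second strand) is triangular with invertible diagonal, so the two systems have the same solution space. \textbf{I expect the main obstacle to be} the honest bookkeeping of the braid band move through the reduction: verifying via the cabling argument that the move really passes through every conjugation and every application of the quadratic relation, and in particular explaining cleanly why the reduction of $t_1^{k}$ to the index-$0$ term forces \emph{both} the first- and second-strand moves on $t^{k}$ to appear. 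Confirming that the transition is genuinely invertible, so that the equivalence holds in both directions, is the second delicate point.
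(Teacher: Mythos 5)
Your overall skeleton---manage the gap of $t_1^k$ on both sides of the equation simultaneously and pair off the resulting terms---is indeed the paper's strategy (it is the parallel decomposition displayed right after the lemma, quoted from [DLP, Lemma~3]). But the proposal contains a load-bearing claim that is false and that, if it were true, would make the statement you are proving impossible to reach. You assert that, by the cabling argument of Proposition~\ref{picprop2} and Lemma~\ref{lbbm'skein}, ``the move also passes through every conjugation.'' It does not: braid band moves and conjugation do \emph{not} commute (this is exactly the paper's Remark~5, and it is the entire reason bbm's must eventually be performed on \emph{all} moving strands). The word $t_1^k$ \emph{starts} with a gap, so managing it requires conjugations by $g_1$ on the left; the corresponding conjugations on the bbm side are by $g_2$, and these do not slide past the $\sigma_1^{\pm 1}$ created by the move (nor past $t^p$ in the relevant positions). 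It is precisely this failure that produces the third equation: in the paper's computation the $z$-weighted copies of $t^k$ on the left pair with $z\cdot\widehat{t^pt_1^k\sigma_1^{\pm1}}$ (first-strand move), while the leading term $q^k t^k$ pairs with $q^k z\cdot\widehat{t^pt_1^k\sigma_2\sigma_1^{\pm1}\sigma_2^{-1}}$ --- an extra factor $z$ and the move on the \emph{second} strand. Under your commutation claim every residual term would pair with a first-strand move and the second-strand equation could never arise; your own closing paragraph concedes you cannot explain its appearance, which means the key step of the proof is the one that is missing. What the paper actually does is an explicit trace-level computation with exact coefficients,
\[
t_1^{k}\ \widehat{\simeq}\ (q-1)^2\sum_{j=0}^{k-2}\sum_{\phi=0}^{k-2-j}q^{j+\phi}\,t^{j+1+\phi}t_1^{k-1-j-\phi}\ +\ (q-1)(k-1)q^{k-1}z\,t^{k}\ +\ (q-1)q^{k-1}z\,t^{k}\ +\ q^{k}t^{k},
\]
carried out in parallel on $t^pt_2^k\sigma_1^{\pm1}$, where the divergence of the two reductions (not their agreement) is what delivers the stated list.

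A secondary problem is your equivalence argument. A ``triangular transition with invertible diagonal'' cannot identify the solution set of one scalar equation with that of a collection of several independent equations; the invertibility of the coefficient $q^k$ is used in [DLP] for the reduction of the \emph{system} (equations from elements of $\Sigma$ with gaps, bbm on the first strand, versus equations from gapless elements of $\Lambda^{aug}$, bbm on any strand), not to extract the three listed equations from the single one for $t_1^k$ in isolation. As written, your last paragraph proves neither direction.
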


Indeed, in \cite{DLP} we prove that:

\[
\begin{array}{lcl}
t_1^{k} & \overset{bbm}{\underset{1^{st} str.}{\longrightarrow}} & t^{p}t_2^{k}\sigma_1^{\pm 1}\\
\widehat{\simeq} & & \widehat{\simeq}\\
{\scriptstyle (q-1)^2\sum_{j=0}^{k-2}\sum_{\phi=0}^{k-2-j}q^{j+\phi}t^{j+1+\phi}t_1^{k-1-j-\phi}} & \overset{bbm}{\underset{1^{st} str.}{\longrightarrow}} & {\scriptstyle (q-1)^2\sum_{j=0}^{k-2}\sum_{\phi=0}^{k-2-j}q^{j+\phi}t^pt_1^{j+1+\phi}t_2^{k-1-j-\phi}\sigma_1^{\pm 1}} \\
 (q-1)(k-1)q^{k-1}zt^{k} & \overset{bbm}{\underset{1^{st} str.}{\longrightarrow}} &  (q-1)(k-1)q^{k-1}zt^pt_1^{k}\sigma_1^{\pm 1}\\
 (q-1)q^{k-1}zt^{k} & \overset{bbm}{\underset{1^{st} str.}{\longrightarrow}} & (q-1)q^{k-1}zt^pt_1^{k}\sigma_1^{\pm 1} \\
 q^kt^k & \overset{bbm}{\underset{2^{nd} str.}{\longrightarrow}} & q^{k}zt^pt_1^{k}\sigma_2\sigma_1^{\pm 1} \sigma_2^{-1}.\\
\end{array}
\]

\noindent which captures the main idea of the proof of Lemma~\ref{gap}.

\smallbreak

Let now $\tau_{gaps}$ denote a word containing gaps in the (ordered) indices but not starting with a gap. When managing the gaps, the first part of the word (before the first gap) remains intact after managing the gaps and the same carries through after the performance of a {\it bbm} on the first moving strand. That is, the following diagram commutes:
\[
\begin{matrix}
\tau_{gaps}\cdot w & \underset{bbm}{\overset{1^{st} str.}{\longrightarrow}} & t^p  {\tau_{gaps}}_{+} \cdot w_+ g_1^{\pm 1} \\
\mid & & \mid \\
 man.\ gaps     & & man.\ gaps\\
\downarrow & & \downarrow \\
\sum_{i} A_i \tau_{i}\cdot w_i & \underset{bbm}{\overset{1^{st} str.}{\longrightarrow}} & \sum_{i} A_i t^p \tau_{i_+}\cdot w_{i_+}g_1^{\pm1} \\
\end{matrix}
\]
\noindent where $\tau_{gaps}\cdot w \in \Sigma$ and $\tau_i \in \Lambda^{aug}$, for all $i$.

In the case where the word $\tau_{gaps}\cdot w \in \Sigma$ starts with a gap, in \cite{DLP} it is shown that equations obtained from $\tau\cdot w$ are equivalent
to equations obtained from elements $\tau_i\cdot w_i \in \Sigma$, where $\tau_i$ are monomials in the $t_i$'s {\it not} starting with a gap, {\it but with the {\it bbm} performed on any strand} (see Fig.~\ref{exgaps}).

\smallbreak

The above are summarized in the following proposition:

\begin{prop}{\cite[Proposition~3]{DLP}}\label{imp}
In order to obtain an equivalent infinite system to the one obtained from elements in $\Sigma$ by performing braid band moves on the first moving strand, it suffices to consider monomials in $\Lambda^{aug}$ followed by braiding tails in ${\rm H}_n(q)$ and perform braid band moves on any moving strand.
\end{prop}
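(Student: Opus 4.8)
The plan is to start from the reduction already achieved in Proposition~\ref{picprop2}, which tells us that every equation coming from $\Sigma^{\prime}$ is governed by an equation of the form $X_{\widehat{T\cdot w}}=X_{\widehat{t^p T_+\cdot w_+}g_1^{\pm1}}$, where $T\cdot w\in\Sigma$ and the braid band move is performed on the first moving strand. Here $T$ is a monomial in the $t_i$'s which may have gaps in its indices and unordered exponents, and $w\in{\rm H}_n(q)$ is the braiding tail. The goal is to trade such an equation for a finite collection of equations attached to words $\tau_i\cdot w_i$ with $\tau_i\in\Lambda^{aug}$ (no gaps) and bbm's allowed on \emph{any} moving strand. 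The engine for removing gaps is Theorem~\ref{gp}, which writes a gapped monomial, up to conjugation, as $T\ \widehat{=}\ \sum_i\tau_i\cdot\beta_i$ with $\tau_i\in\Lambda^{aug}$ and $\beta_i\in{\rm H}_n(q)$. The whole argument is an induction organised around a single dichotomy: whether or not $T$ begins with a gap.

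First I would treat the case in which $T$ \emph{does not} begin with a gap, i.e. the leading looping generator is $t_0=t$. Here the segment of the word lying to the left of the first gap is untouched by the gap-management conjugations of Theorem~\ref{gp}, and it is also untouched by a bbm on the first moving strand, since the new strand produced by the bbm and the $t^p$ it carries sit to the left of the entire word (recall move (iv) of Theorem~\ref{markov}). Consequently gap management and the first-strand bbm commute, which is exactly the content of the commuting square displayed before the statement: applying Theorem~\ref{gp} to both $T\cdot w$ and $t^pT_+\cdot w_+g_1^{\pm1}$ produces the same scalars and the same index shift, so the single equation for $T\cdot w$ splits, by linearity, into the equations for the $\tau_i\cdot w_i$, still with the bbm on the first strand. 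No new strands are needed in this case.

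The main obstacle is the complementary case, where $T$ \emph{does} begin with a gap, the model being $t_1^{k}$ with no $t_0$ factor. Now the leading gap cannot be absorbed by conjugation without disturbing the left end of the word, and a first-strand bbm no longer commutes with gap management: bringing the leading $t_1$ down toward $t_0$ forces us, through the quadratic (skein) relation on the $g_1$'s, to expand $t_1^{k}$ into lower words, and when this is carried out \emph{simultaneously} before and after the bbm the strand on which the bbm acts is displaced. This is precisely the phenomenon recorded in the base case Lemma~\ref{gap}: the equation $X_{\widehat{t_1^{k}}}=X_{\widehat{t^pt_2^{k}\sigma_1^{\pm1}}}$ is equivalent to a family of equations on gap-free words $t^{u_0}t_1^{u_1}$ and $t^k$, some of which carry a bbm on the \emph{second} strand, e.g. $X_{\widehat{t^{k}}}=X_{\widehat{t^pt_1^{k}\sigma_2\sigma_1^{\pm1}}\sigma_2^{-1}}$. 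I would take Lemma~\ref{gap} as the base of the induction, and for a general leading-gap word peel off the leading block: lowering the leading index (via the conjugations and quadratic relations that realise $t_i=g_i\ldots g_1 t g_1\ldots g_i$) strictly reduces the first gap or the leading exponent, while the tail of the word falls under the already-settled no-leading-gap case through the commuting square. Tracking how each such step relocates the active strand shows that every resulting equation is attached to a word in $\Lambda^{aug}$ followed by a braiding tail, at the cost of allowing the bbm to sit on an arbitrary moving strand.

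Finally I would assemble the two cases into one induction along the reduction provided by Theorem~\ref{gp} and the ordering of Definition~\ref{order}: each step lowers the word in this ordering and terminates in gap-free monomials, and linearity distributes the equations over the $\Lambda^{aug}$-summands. The only genuinely delicate point is the bookkeeping in the leading-gap case, namely verifying that the strand-index shifts produced by the conjugations are matched identically on both sides of each equation, so that no spurious relations are introduced and the forced passage from ``first strand'' to ``any strand'' remains controlled.
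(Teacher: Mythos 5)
Your proposal follows essentially the same route as the paper: reduction from Proposition~\ref{picprop2}, the dichotomy on whether the monomial starts with a gap, the commuting square between gap management (Theorem~\ref{gp}) and first-strand \emph{bbm} in the no-leading-gap case, and Lemma~\ref{gap} as the inductive base for the leading-gap case, which is precisely where the passage from ``first strand'' to ``any strand'' is forced. This matches the paper's argument in both structure and key ingredients.
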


\begin{figure}
\begin{center}
\includegraphics[width=2.7in]{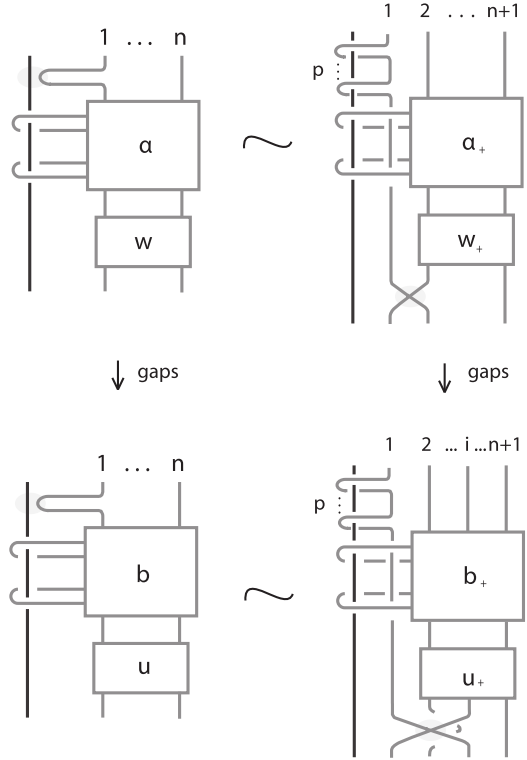}
\end{center}
\caption{{\it Bbm}'s before and after managing the gaps.}
\label{exgaps}
\end{figure}

\subsection{From the ${\rm H}_n(q)$-module $\Lambda^{aug}$ to the ${\rm H}_n(q)$-module $\Lambda$: Ordering the exponents}

The monomials in the $t_i$'s that we obtain after managing the gaps are not elements in the set $\Lambda$, since the exponents of the loop generators are not necessarily ordered. We now order the exponents of the $t_i$'s and we show that equations obtained from elements in the ${\rm H}_n(q)$-module $\Lambda^{aug}$ reduce to equations obtained from elements in the ${\rm H}_n(q)$-module $\Lambda$.

\smallbreak

The procedure we follow is similar to the one described in \cite{DL2}, but, as we mentioned earlier, in this case we do it simultaneously before and after the performance of a {\it bbm}.

\begin{prop}{\cite[Proposition~4]{DLP}}
Equations of the infinite system obtained from elements in $\Lambda^{aug}$ followed by braiding tails in ${\rm H}_{n}(q)$ are equivalent to equations obtained from 
elements in $\Lambda$ followed by braiding tails, where a braid band move can be performed on any moving strand.
\end{prop}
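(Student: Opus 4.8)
The plan is to show that the exponent-ordering procedure of Theorem~\ref{exp} is interchangeable with the braid band moves, in exactly the spirit of Lemmas~\ref{ArtComb} and \ref{lbbm'skein} and of Propositions~\ref{picprop2} and \ref{imp}. Starting from an element $\tau_{0,m}^{k_{0,m}}\cdot w$ in the ${\rm H}_n(q)$-module $\Lambda^{aug}$, with a braiding tail $w\in {\rm H}_n(q)$ and unordered exponents, I would impose the equation $X_{\widehat{\tau_{0,m}^{k_{0,m}}\cdot w}} = X_{\widehat{bbm_{\pm i}(\tau_{0,m}^{k_{0,m}}\cdot w)}}$ for a band move on an arbitrary moving strand $i$, and then apply Theorem~\ref{exp} simultaneously to both sides. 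Since Theorem~\ref{exp} gives $\tau_{0,m}^{k_{0,m}}\cdot w\ \widehat{=}\ \sum_j \tau_{0,j}^{\lambda_{0,j}}\cdot w_j$ with each $\tau_{0,j}^{\lambda_{0,j}}\in\Lambda$, the aim is to prove that the conjugations realizing this $\widehat{=}$ survive the band move, so that the single equation above becomes equivalent to the collection $X_{\widehat{\tau_{0,j}^{\lambda_{0,j}}\cdot w_j}} = X_{\widehat{bbm_{\pm i_j}(\tau_{0,j}^{\lambda_{0,j}}\cdot w_j)}}$, where the move is now permitted on any moving strand $i_j$.

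The technical engine is the cabling device already used for Proposition~\ref{picprop2}: after the band move one cables the newly created parallel strand together with the surgery strand and regards this cable as a single thickened fixed strand. The key observation, recorded in the discussion preceding Proposition~\ref{picprop2}, is that under a braid band move the looping generators $t_i$ remain formally unchanged and only their indices shift, so that $\tau_{0,m}^{k_{0,m}}\cdot w$ and its image $t^p\,\tau_+\cdot w_+\,g_1^{\pm1}$ (all indices shifted by one) are formally the same relative to the cable. Consequently the exponent-ordering conjugations of Theorem~\ref{exp}, being built from conjugations by braiding generators $g_j$ together with the commutation and defining identities among the $t_i$'s, act identically on the cabled picture before and after the move. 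This should produce, term by term, the same scalars in front of the ordered monomials $\tau_{0,j}^{\lambda_{0,j}}$ on both sides, which is precisely what lets one descend from the equation for the $\Lambda^{aug}$-word to the system of equations for the $\Lambda$-words.

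I would organize the argument as an induction on the number of inversions in the exponent sequence $(k_0,\ldots,k_m)$, that is the number of indices with $k_i<k_{i+1}$, following the example after Theorem~\ref{exp} in which Theorem~\ref{exp} is applied to the right-most offending exponent first. Each single adjacent reordering lowers the order of the word with respect to Definition~\ref{order} and introduces a braiding tail, so the induction descends along the well-ordering of $\Lambda_{(k)}$, the base case being a word already in $\Lambda$, where there is nothing to reorder. The point to verify at each inductive step is that the conjugation reordering one adjacent pair of exponents commutes with the band move, and that such a conjugation may move the surgery twist onto a different moving strand, which is harmless since band moves on every strand are allowed.

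The step I expect to be the main obstacle is the bookkeeping of the strand index of the band move through the reordering conjugations. Unlike the managing-gaps situation of Proposition~\ref{imp}, where the initial segment of the word before the first gap stayed intact and the move could be kept on the first strand, here the conjugations that reorder exponents may genuinely shift the surgery twist to another moving strand; one must confirm that no band move is lost or spuriously created, and that the cable continues to behave exactly like the surgery strand throughout the whole terminating but lengthy reordering. Making this tracking precise, rather than the individual Hecke-algebra computations, is where the real care will be required.
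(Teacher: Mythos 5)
Your proposal is correct and follows essentially the same route as the paper: the paper's proof is a one-liner invoking Theorem~\ref{exp}, observing that every step used to order the exponents remains the same after the performance of the \emph{bbm}, ignoring the $t^p$ factor created by the move. Your additional scaffolding (the cabling device, the induction on inversions, and the tracking of the strand index, which the paper handles by its remark that \emph{bbm}'s and conjugation do not commute and hence moves must be allowed on all moving strands) is exactly the detail the paper leaves implicit.
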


\begin{proof}
It follows from Theorem~9, since all steps followed so as to order the exponents in a monomial in the $t_i$'s remain
the same after the performance of a {\it bbm}, ignoring the $t^p$ appearing after the {\it bbm}.
\end{proof}

\subsection{From the ${\rm H}_{n}(q)$-module $\Lambda$ to $\Lambda$: Eliminating the tails}

We now deal with the `braiding tails'. Applying the same technique as in Theorem~10 before and after the performance of a {\it bbm}, we first prove that equations obtained by performing {\it bbm}'s on any moving strand on elements in $\Lambda$ followed by words in ${\rm H}_n(q)$, reduce to equations obtained by performing {\it bbm}'s on any moving strand from elements in $\Lambda^{aug}$ (with no `braiding tails').

\smallbreak

\begin{prop}{\cite[Proposition~5]{DLP}}\label{tailbbm}
Equations obtained from {\it bbm}'s on elements in $\Lambda$ followed by words in ${\rm H}_n(q)$ are equivalent to equations obtained by performing a braid band move on any moving strand on elements in $\Lambda^{aug}$ .
\end{prop}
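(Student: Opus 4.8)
The plan is to transport the tail-elimination of Theorem~\ref{tails} (Theorem~10) across a braid band move, running it simultaneously on an element $\tau\cdot w$ of the ${\rm H}_n(q)$-module $\Lambda$, with $\tau\in\Lambda$ and $w\in{\rm H}_n(q)$, and on its image under $bbm_i$. Concretely, I would establish the commuting square
\[
\begin{matrix}
\tau\cdot w & \overset{bbm_i}{\longrightarrow} & t^p\,\tau_+\cdot w_+\,g_1^{\pm 1}\\
\downarrow & & \downarrow\\
\sum_j f_j(q,z)\,\tau_j & \overset{bbm_i}{\longrightarrow} & \sum_j f_j(q,z)\,t^p\,{\tau_j}_+\,g_1^{\pm 1}
\end{matrix}
\]
in which the vertical arrows are the conjugations and stabilization moves furnished by Theorem~\ref{tails}, each $\tau_j\in\Lambda^{aug}$ with $\tau_j<\tau$, and the \emph{same} scalars $f_j(q,z)$ appear in both rows. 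Since conjugation and stabilization leave the invariant $X$ unchanged, both vertical arrows are identities in $\mathcal{S}({\rm ST})$; hence the relation $X_{\widehat{\tau\cdot w}}=X_{\widehat{bbm_i(\tau\cdot w)}}$ read down the two columns becomes $\sum_j f_j\,X_{\widehat{\tau_j}}=\sum_j f_j\,X_{\widehat{bbm_i(\tau_j)}}$, exhibiting every equation coming from $\Lambda$-with-tails as a combination of equations coming from tail-free elements of $\Lambda^{aug}$, and conversely.

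The heart of the matter is the commutativity of this square, and here I would argue exactly as in the proof of the preceding proposition (which invokes Theorem~\ref{exp}): a braid band move inserts a looping factor $t^p$ on the first, cabled, strand, shifts every index of $\tau\cdot w$ up by one, and appends a single crossing $g_1^{\pm 1}$. The moves of Theorem~\ref{tails} that strip off the tail $w$ proceed only by conjugating by braiding generators $g_j$ and by destabilizing a top strand; these are formally identical before and after the index shift, and they never interact with the prepended $t^p$ once the first moving strand is cabled with the surgery strand and treated as a single thickened strand in the manner of Lemma~\ref{sprbbm}. Consequently, applying Theorem~\ref{tails} to the shifted word $\tau_+\cdot w_+$ yields precisely the shifted outputs ${\tau_j}_+$ with the same coefficients $f_j(q,z)$, which is the content of the right-hand column; the appended $g_1^{\pm 1}$ and the factor $t^p$ are simply carried along.

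The step I expect to demand the most care is twofold. First, one must check that none of the conjugations eliminating $w$ ever has to be dragged across the cabled strand carrying $t^p$, so that the scalar bookkeeping of Theorem~\ref{tails} is genuinely untouched and the coefficients in the two rows really do coincide. Second, the proposition allows the $bbm$ on \emph{any} moving strand, not only the first; for this I would reduce a $bbm_{\pm i}$ to a $bbm_{\pm 1}$ by the conjugation displayed after Theorem~\ref{markov} (Lemma~1 of \cite{DLP}) and observe that this conjugation commutes with the tail elimination for the same reason as above, so the whole argument applies verbatim on each strand. That each output $\tau_j$ lands in $\Lambda^{aug}$ and satisfies $\tau_j<\tau$ is inherited directly from Theorem~\ref{tails}, so the two-way equivalence of the infinite systems follows from the reversibility of the conjugation and stabilization moves, with no further computation required.
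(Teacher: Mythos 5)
Your proposal is correct and follows essentially the same route as the paper: the paper's proof likewise performs the {\it bbm}, cables the new parallel strand with the surgery strand, applies Theorem~\ref{tails} simultaneously before and after the move (so that the tail-elimination proceeds formally identically and with the same coefficients on both sides), and then uncables. Your commuting square and coefficient bookkeeping are just a more explicit rendering of that same cabling argument.
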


\begin{proof}
We perform a {\it bbm} on an element $a\cdot w$ in the ${\rm H}_n(q)$-module $\Lambda$ and we cable the parallel strand with the surgery strand, see Figure~\ref{tailfig}. We then apply Theorem~10 before and after
the performance of the {\it bbm} and uncable the parallel strand. The proof is illustrated in Figure~\ref{tailfig}.
\end{proof}

\begin{figure}[!ht]
\begin{center}
\includegraphics[width=5.4in]{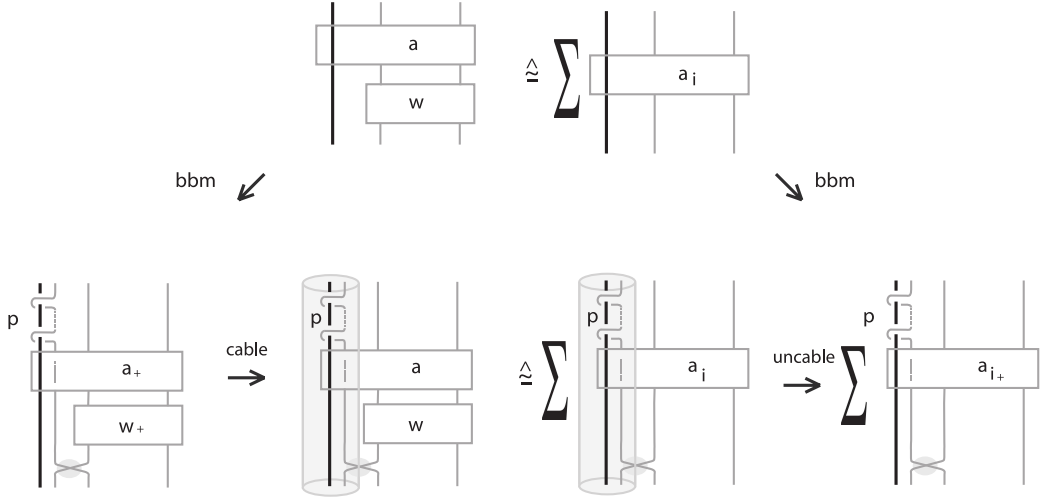}
\end{center}
\caption{ The proof of Proposition~\ref{tailbbm}. }
\label{tailfig}
\end{figure}

\begin{ex}\rm
In this example we demonstrate Proposition~\ref{tailbbm}.
\[
\begin{matrix}
tt_1t_2\cdot g_1g_2g_1 & \underset{bbm}{\overset{1^{st} str.}{\longrightarrow}} & t^p t_1t_2t_3 \cdot g_2g_3g_2 g_1^{\pm 1} \\
\mid & & \mid \\
elim.\ tails     & & elim.\ tails\\
\downarrow & & \downarrow \\
(q-1)(q^2-q+1)\cdot tt_1t_2 & \underset{bbm}{\overset{1^{st} str.}{\longrightarrow}} & (q-1)(q^2-q+1)\cdot t^pt_1t_2t_3 g_1^{\pm1} \\
+ & & + \\
q(q-1)^2z \cdot tt_1^2 & \underset{bbm}{\overset{1^{st} str.}{\longrightarrow}} & q(q-1)^2z \cdot t^pt_1t_2^2 g_1^{\pm1}\\
+ & & + \\
\left[q^2(q-1)(q^2-q+1)z^2\right]\cdot t^3 & \underset{bbm}{\overset{1^{st} str.}{\longrightarrow}} & \left[q^2(q-1)(q^2-q+1)z^2\right]\cdot t^pt_1^3g_1^{\pm1}\\
+ & & + \\
a\cdot t^2t_1 & \underset{bbm}{\overset{1^{st} str.}{\longrightarrow}} & a\cdot t^pt_1^2t_2g_1^{\pm1}\\
\end{matrix}
\]
\smallbreak
\noindent where $a= q^3z+q^2(q-1)^2+2q^2(q-1)^2z+q(q-1)^4z$.
\end{ex}

In the process of eliminating the `braiding tails' we have so far reached {\it bbm}'s in the set $\Lambda^{aug}$ (with no `braiding tails'). Following the same procedure and applying the same techniques as in \cite{DL2}, we finally reach the set $\Lambda$ by means of the following:

\begin{thm}{\cite[Theorem~8]{DLP}}
In order to obtain the {\it bbm} equations needed for computing the HOMFLYPT skein module of the lens spaces $L(p,1)$, it suffices to perform braid band moves on any strand on elements in the basis $\Lambda$ of $\mathcal{S}({\rm ST})$.
\end{thm}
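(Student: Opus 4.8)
The plan is to assemble the chain of reductions established in Propositions~\ref{bbm's'}--\ref{tailbbm} into a single descent and then to verify that the resulting procedure terminates. First I would record the linear chain: by Proposition~\ref{bbm's'} every {\it bbm} equation~(\ref{eqbbm}) coming from a word in $B_{1,n}$ reduces to a {\it bbm} equation on the first moving strand of an element of $\Sigma^{\prime}$; by Proposition~\ref{picprop2} these reduce in turn to equations of the form~(\ref{eq2}) on elements of $\Sigma$; and by Proposition~\ref{imp} the latter reduce to equations on monomials of $\Lambda^{aug}$ followed by a braiding tail, now with the {\it bbm} allowed on {\it any} moving strand. Thus after these three steps it suffices to solve the system generated by elements $\tau\cdot w$ with $\tau\in\Lambda^{aug}$ and $w\in{\rm H}_n(q)$, with band moves on every strand.

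Next I would apply the exponent-ordering step (\cite[Proposition~4]{DLP}, built on Theorem~\ref{exp}) to pass from $\Lambda^{aug}$ to the ${\rm H}_n(q)$-module $\Lambda$ while retaining the tails, and then Proposition~\ref{tailbbm} (built on Theorem~\ref{tails}) to strip the tails, landing in $\Lambda^{aug}$ with {\it no} braiding tail. At this point one is {\it not} yet in $\Lambda$: eliminating a tail via Theorem~\ref{tails} generically reintroduces gaps in the indices, re-managing those gaps via Theorem~\ref{gp} (Proposition~\ref{imp}) reintroduces tails and disturbs the exponents, and so on. The decisive structural point I would stress is that each manipulation is carried out {\it simultaneously before and after the bbm}: a band move only prepends a factor $t^p$ and shifts all indices by one, so cabling the new strand with the fixed strand makes every gap-managing, exponent-ordering and tail-eliminating identity formally the same on both sides. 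This is exactly the content of the commuting diagrams in Propositions~\ref{imp}--\ref{tailbbm}, and it guarantees that the scalar coefficients produced are identical before and after the {\it bbm}, so the whole cycle applied to the left-hand sides transports verbatim to the right-hand sides.

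The main obstacle, and the crux of the proof, is to show that this cycle terminates with only elements of $\Lambda$ surviving. For this I would invoke the grading and well-ordering of \S\ref{lamb}: the total exponent sum is preserved by every step, so each equation remains inside a fixed level $\Lambda_{(k)}$, and on the corresponding set $\Lambda^{aug}_{(k)}$ one uses the total- and well-ordering of Definition~\ref{order} (Propositions~1 \& 2 of \cite{DL2}). Each of Theorems~\ref{gp}, \ref{exp} and \ref{tails} replaces a word by its homologous $\Lambda$-representative together with a sum of {\it strictly smaller} words, exactly as in the triangular-matrix argument behind Theorem~\ref{newbasis} and Remark~\ref{rm4}. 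Because $\Lambda^{aug}_{(k)}$ is well ordered, no infinite strictly-descending chain exists, so after finitely many passes through the gap/exponent/tail cycle every remaining monomial lies in $\Lambda$. Feeding this termination back through the commuting diagrams yields that the {\it bbm} equations for $\Sigma$ are equivalent to {\it bbm} equations performed on every moving strand of elements of $\Lambda$, which is the assertion. The delicate part is precisely verifying well-foundedness across the {\it full} cycle rather than across a single step, since the three manipulations are interdependent; this is resolved by observing that the level $k$ is an invariant of the whole procedure while the intra-level order of Definition~\ref{order} strictly drops at each application.
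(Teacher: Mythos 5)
Your proposal is correct and takes essentially the same route as the paper: the paper's own (very terse) proof rests on exactly the ingredients you assemble, namely the gap-managing, exponent-ordering and tail-eliminating Theorems~\ref{gp}, \ref{exp} and \ref{tails} applied simultaneously before and after the band move, together with the commutation of {\it bbm}'s with the quadratic (skein) relation and the stabilization/combing operations (Lemmas~\ref{ArtComb} and \ref{lbbm'skein}), chained through Propositions~\ref{bbm's'}--\ref{tailbbm}. Your termination argument via the level grading and the well-ordering of Definition~\ref{order} is the same descent that the paper does not spell out but defers to the technical inductions of \cite{DL2} (Remark~\ref{rm4}).
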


\begin{proof}
The proof is based on Theorems~8, 9 \& 10 and the fact that the {\it bbm}'s commute with the stabilization moves (Lemma~1)
and the skein (quadratic) relation (Lemma~2).
\end{proof}

\begin{remark}\rm
The fact that the {\it bbm}'s and conjugation do not commute, results in the need of performing braid band moves on all moving strands of elements in $\Lambda$.
\end{remark}

\begin{figure}
\begin{center}
\includegraphics[width=2.5in]{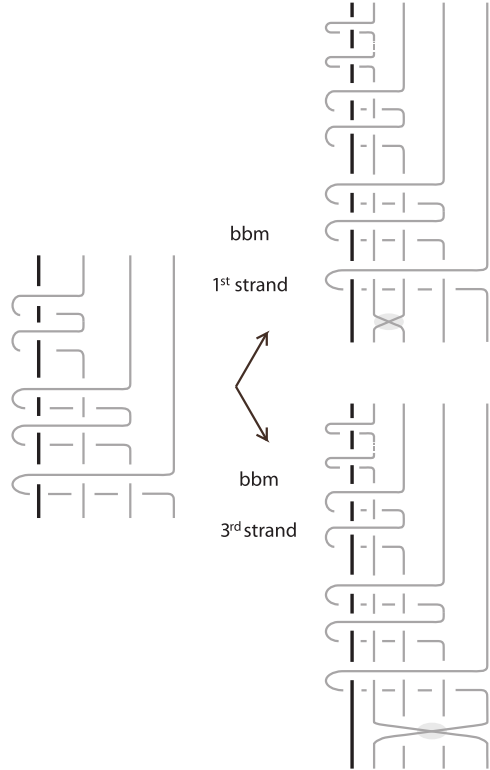}
\end{center}
\caption{The performance of a {\it bbm} on the $1^{st}$ and on the $3^{rd}$ strand of an element in $\Lambda$.}
\label{exlambda}
\end{figure}

\section{Conclusions}

In this paper we present recent results toward the computation of the HOMFLYPT skein module of the lens spaces $L(p,1)$, $\mathcal{S}\left(L(p,1)\right)$, via braids. We first presented a new basis $\Lambda$ of $\mathcal{S}({\rm ST})$ in braid form, which is crucial for the braid approach to $\mathcal{S}\left(L(p,1)\right)$ and we then related $\mathcal{S}\left(L(p,1)\right)$ to $\mathcal{S}({\rm ST})$ by means of equations resulting from {\it bbm}'s. In particular, we showed that by considering elements in the basis $\Lambda$ and imposing on values on them of the Lambropoulou invariant $X$ for knots and links in ST relations coming from the performance of {\it bbm}'s on all their moving strands, we arrive at an infinite system of equations, the solution of which coincides with the computation of $\mathcal{S}\left(L(p,1)\right)$. Our results are summarized to the following: 

\bigbreak

{\it
\noindent In order to compute $\mathcal{S}(L(p,1))$ it suffices to solve the infinite system of equations:

$$ X_{\widehat{\tau}}\ =\ X_{\widehat{bbm_i(\tau)}},$$

\noindent where $bbm_i(\tau)$ is the result of the performance of {\it bbm} on the $i^{th}$-moving strand of $\tau \in \Lambda$, for all $\tau \in \Lambda$ and for all $i$.
}

\bigbreak

In \cite{DL3} we elaborate on the system. This is a very technical and difficult task. These results will serve as our main tool for computing $\mathcal{S}\left(L(p,q)\right)$ in general.


\begin{thebibliography}{99}

\bibitem[D]{D} {\sc I. Diamantis}, The HOMFLYPT skein module of the lens spaces $L(p,1)$ via braids, {\em PhD thesis},
National Technical University of Athens, 2015.


\bibitem[DL1]{DL1} {\sc I. Diamantis, S. Lambropoulou}, Braid equivalences in 3-manifolds 
with rational surgery description, {\em Topology and its Applications}, {\bf 194} (2015), 269-295.

\bibitem[DL2]{DL2} {\sc I. Diamantis, S. Lambropoulou}, A new basis for the HOMFLYPT skein module of the solid torus, {\em J. Pure Appl. Algebra}
{\bf 220} Vol. 2 (2016), 577-605.

\bibitem[DL3]{DL3} {\sc I. Diamantis, S. Lambropoulou}, The HOMFLYPT skein module of the lens spaces $L(p,1)$ via braids, in preparation.

\bibitem[DLP]{DLP} {\sc I. Diamantis, S. Lambropoulou, J. Przytycki}, Topological steps toward the HOMFLYPT skein module of the lens spaces $L(p,1)$ via braids, {\it J. Knot Theory Ramifications} {\bf 25}, No. 13, (2016).

\bibitem[FYHLMO]{FYHLMO} {\sc P. Freyd, D. Yetter, J. Hoste, W. B. R. Lickorish, K. Millett, A. Ocneanu}, A new polynomial invariant of knots and links, {\em Bull. Amer. Math. Soc.}, {\bf 12} (1985), 239-249.

\bibitem[GM]{GM}
{\sc B. Gabrov\v sek, M. Mroczkowski}, The Homlypt skein module of the lens spaces $L(p,1)$, {\em Topology and its Applications}, {\bf 175} (2014), 72-80.


\bibitem[GZ1]{GZ1}
{\sc P. Gilmer, J. Zhong}, On the HOMFLYPT skein module of $S^1\times S^2$, {\em Mathematische Zeitschrift}, {\bf 237} (2001), 769-814.


\bibitem[GZ2]{GZ2}
{\sc P. Gilmer, J. Zhong}, The HOMFLYPT skein module of a connected sum of 3-manifolds, {\em Algebr. Geom. Topol.}, {\bf 1} (2001), 605-625.


\bibitem[HK]{HK}  {\sc J.~Hoste, M.~Kidwell}, Dichromatic link invariants, {\it Trans. Amer. Math. Soc.} {\bf 321} (1990), No. 1, 197-229.

\bibitem[HL]{HL}{\sc R.~H\"aring-Oldenburg, S.~Lambropoulou},
Knot theory in handlebodies, {\it J. Knot Theory Ramifications} {\bf 11}, No. 6,
(2002) 921-943.

\bibitem[HP]{HP} {\sc J.Hoste, J.H.Przytycki}, A survey of skein modules of 3-manifolds. {\it Knots 90 (Osaka, 1990)}, de Gruyter, Berlin, (1992) 363-379.

\bibitem[Jo]{Jo}{\sc V. F. R. Jones},
A polynomial invariant for links via Neumann algebras, {\it Bull. Amer. Math. Soc.} {129},
(1985) 103-112.


\bibitem[La1]{La1} {\sc S. Lambropoulou}, $L$-moves and Markov theorems, {\it J. Knot Theory Ramifications} {\bf 16} No. 10, (2007) 1-10.

\bibitem[La2]{La2}{\sc S. Lambropoulou}, Braid structures in handlebodies, knot complements and
3-manifolds,  {\it Proceedings of Knots in Hellas '98}, World Scientific Press,
Series of Knots and Everything {\bf 24}, (2000) 274-289.

\bibitem[La3]{La3} {\sc S. Lambropoulou}, Solid torus links and Hecke algebras of B-type, {\it Quantum Topology}; D.N. Yetter Ed.; World Scientific Press, (1994), 225-245.

\bibitem[La4]{La4}{\sc S. Lambropoulou}, Knot theory related to generalized and cyclotomic
Hecke algebras of type {\it B}, {\it J. Knot Theory Ramifications} {\bf 8},
              No. 5, (1999) 621-658.

\bibitem[LR1]{LR1}
{\sc S. Lambropoulou, C.P. Rourke}, Markov's theorem in $3$-manifolds, \emph{Topology and its Applications} {\bf 78},
(1997) 95-122.

\bibitem[LR2]{LR2} {\sc S. Lambropoulou, C. P. Rourke}, Algebraic Markov equivalence for links in $3$-manifolds,
              {\em Compositio Math.} {\bf 142} (2006) 1039-1062.


\bibitem[Mro]{Mro} {\sc M. Mroczkowski}, Polynomial Invariants of links in the projective space, {\it Fundamenta Mathematicae} {\bf 184}, Proceedings of Knots in Poland 2003, (2004) 223-267. 

\bibitem[P]{P} {\sc J.~Przytycki}, Skein modules of 3-manifolds, {\it Bull. Pol. Acad. Sci.: Math.}, {\bf 39, 1-2} (1991), 91-100.

\bibitem[P2]{P2} {\sc J.~Przytycki}, Skein module of links in a handlebody, {\it Topology 90, Proc. of the Research Semester in Low Dimensional Topology at OSU}, Editors: B.Apanasov, W.D.Neumann, A.W.Reid, L.Siebenmann, De Gruyter Verlag (1992), 315-342.

\bibitem[PT]{PT} {\sc J. H. Przytycki, P. Traczyk}, Invariants of links of Conway type, {\it Kobe J. Math.} {\bf 4} (1987), 115-139.

\bibitem[Tu]{Tu} {\sc V.G.~Turaev}, The Conway and Kauffman modules of the solid torus,  {\it Zap. Nauchn. Sem. Lomi} {\bf 167} (1988), 79--89. English translation: {\it J. Soviet Math.} (1990), 2799-2805.

\end{thebibliography}
\end{document}